\newtheorem{theorem}{Theorem}[section]
\newtheorem{proposition}[theorem]{Proposition}
\newtheorem{definition}[theorem]{Definition}
\newtheorem{lemma}[theorem]{Lemma}
\newtheorem{corollary}[theorem]{Corollary}
\newtheorem{remark}[theorem]{Remark}
\title{On the capacity dimension of the boundary of CAT(0) spaces}
\author{Dawei Wang}
\date{}
\begin{document}
	
	\maketitle

\begin{abstract}
	In this paper, we study the capacity dimension of the boundary of $CAT(0)$ spaces. We first compare the two metrics on the boundary of a hyperbolic $CAT(0)$ space, i.e., the visual metric and the conical metric, and prove that they give the same capacity dimension of the boundary. Then we study the capacity dimension of the boundary of buildings, which is an important class of $CAT(0)$ spaces. Finally, we give a possible method to prove the finiteness of the asymptotic dimension of $CAT(0)$ spaces.
\end{abstract}

\section {Introduction}

Asymptotic dimension is one of the most interesting invariants in large-scale geometry of metric spaces and in particular finitely generated groups. It is important because the Novikov conjecture holds for groups with finite asymptotic dimension, see \cite{Yu}. It is known that the asymptotic dimension of $\delta$-hyperbolic groups is finite, which was proved by Gromov in \cite{Gromov}. However, the finiteness of the asymptotic dimension of CAT(0) groups has been open for decades.        

To get a more precise bound on the asymptotic dimension of hyperbolic space, Buyalo introduced the capacity dimension \cite{B}.

\begin{definition}[Capacity dimension]
	Let $X$ be a metric space. We say the capacity dimension is at most $n$, denoted by $cdim X\le n$, if there exists $0<c\le1$ and $\lambda_0$, such that for all $0<\lambda\le\lambda_0$, there is a cover $\mathcal{U}$ with $order(\mathcal{U})\le n+1, mesh(\mathcal{U})\le\lambda$, and $\mathcal{L}(\mathcal{U})\ge c\lambda$. The number $\frac{\mathcal{L}(\mathcal{U})}{mesh(\mathcal{U})}\ge c$ is called the capacity of $\mathcal{U}$.
\end{definition}

With the capacity dimension, Buyalo and Lebedeva proved the following theorem.
\begin{theorem}[\cite{B}]\label{asdimcdiminequality}
	Let $X$ be a visual Gromov hyperbolic space. Then
	$$asdim X\le cdim \partial X +1.$$
\end{theorem}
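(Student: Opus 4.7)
The plan is to convert a sequence of capacity covers of the boundary, at geometrically decreasing scales, into a single uniformly bounded cover of $X$ of order at most $n+2$, where $n := cdim\,\partial X$. The bridge is the visual correspondence: a scale $\lambda$ on $\partial X$ (in a visual metric $\rho$ of parameter $a$, so that $\rho(\xi,\eta) \asymp e^{-a(\xi|\eta)_{x_0}}$) corresponds to points in $X$ at depth $\sim -\frac{1}{a}\log\lambda$ from a fixed basepoint $x_0$.

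First I would decompose $X$ into concentric annular shells $A_k = \{x : r_k \le d(x_0,x) < r_{k+1}\}$, where the $r_k$ form an arithmetic progression tuned so that $\lambda_k := e^{-a r_k}$ shrinks by a fixed multiplicative factor. For each $k$, the hypothesis $cdim\,\partial X \le n$ supplies a cover $\mathcal{U}_k$ of $\partial X$ with $order(\mathcal{U}_k) \le n+1$, $mesh(\mathcal{U}_k) \le \lambda_k$, and $\mathcal{L}(\mathcal{U}_k) \ge c \lambda_k$ for a fixed $c > 0$.

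Next, for each $U \in \mathcal{U}_k$ I would lift $U$ to the slab $\widetilde{U} \subset A_k$ consisting of those $x \in A_k$ whose shadow on $\partial X$ (the set of boundary endpoints of quasi-geodesic rays from $x_0$ through $x$) meets $U$. By $\delta$-hyperbolicity, the shadow of any $x \in A_k$ has visual diameter $\asymp \lambda_k$, so two slabs $\widetilde{U},\widetilde{V} \subset A_k$ intersect only when $U,V$ are visually close, and after adjusting constants the lift inherits $order \le n+1$ on $A_k$. The slab diameters are uniformly bounded (depending on the annular width, $a$, and $\delta$), and within the bulk of $A_k$ the Lebesgue number is at least a fixed multiple of the annular width.

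The main obstacle will be controlling the interface between $A_k$ and $A_{k+1}$: to achieve a uniform Lebesgue bound on all of $X$ one must enlarge each slab slightly into its neighboring annulus, and verify (i) a point deep in $A_k$ meets only level-$k$ slabs, and (ii) a point in the transition zone meets at most $n+1$ slabs of one level and at most one slab of the other, yielding global order at most $n+2$. This requires calibrating the annular widths against $\delta$ and $a$, together with a hyperbolicity argument ensuring that quasi-geodesics passing through adjacent annuli project to nearly the same boundary point. Once (i) and (ii) are in place, the resulting cover witnesses $asdim X \le (n+2) - 1 = cdim\,\partial X + 1$ via the standard covering characterization of asymptotic dimension.
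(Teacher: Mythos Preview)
The paper does not prove this theorem. Theorem~\ref{asdimcdiminequality} (restated as Theorem~\ref{asdimlecdim} in the preliminaries) is quoted from Buyalo~\cite{B} as background, with no argument supplied; the paper's own contributions begin in Section~3. So there is no ``paper's own proof'' to compare against.

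That said, your sketch is a faithful outline of Buyalo's original strategy: slice $X$ into annuli $A_k$ at depths $r_k$ chosen so that $e^{-a r_k}$ decreases geometrically, pull a capacity cover $\mathcal{U}_k$ of $\partial X$ at scale $\lambda_k \asymp e^{-a r_k}$ back to $A_k$ via shadows, and then splice across the interfaces. The two points you flag as the ``main obstacle'' are exactly where the work lies. A couple of places where your write-up would need sharpening before it becomes a proof: (a) the claim that a point in the transition zone meets ``at most one slab of the other level'' is too optimistic as stated --- what one actually arranges is that the covers at adjacent levels are \emph{colored} into $n+1$ families each, and the coloring (not a cardinality-one bound) controls the total multiplicity; (b) you should invoke the \emph{visual} hypothesis on $X$ explicitly, since it is what guarantees that every point of $X$ lies on (or uniformly near) a geodesic ray from $x_0$, hence actually has a nonempty shadow to which the lifting applies. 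Without visuality the annular lifts need not cover $X$.
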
  

The inequality gives a new point of view to understand the asymptotic dimension: by looking at the large-scale geometry captured in the boundary. There are different ways to define a metric on the boundary of CAT(0) spaces. However, none of them is as good as the visual metric for $\delta$-hyperbolic spaces. In \cite{Moran2}, Moran proved some good properties of a particular class of metric on the boundary of CAT(0) spaces, the conical metric, see definition \ref{Moranmetric}. In particular, with this metric, she proved that the capacity dimension of the boundary of CAT(0) groups is finite. We believe the conical metric is the right metric for the boundary of CAT(0) spaces to study the large-scale geometry.

Now the ultimate goal is to prove that the asymptotic dimension of CAT(0) groups is finite. With Moran's result, the question is: can we get inequality similar to the one in Theorem \ref{asdimcdiminequality}.

In this paper, we try to give a partial answer to the question above. In particular, we first try to understand the conical metric better by proving the following theorem.
\begin{theorem}
	Let X be a cobounded $\delta$-hyperbolic CAT(0) proper geodesic space. Let $cdim_v(\partial X)$ be the capacity dimension of the boundary of $X$ with the visual metric and $cdim_c(\partial X)$ be the capacity dimension with the visual metric. Then we have $cdim_v(\partial X) = cdim_c(\partial X)$. In particular, for a hyperbolic CAT(0) group G, we have $cdim_v(\partial G) = cdim_c(\partial G)$.
\end{theorem}

Then we study the capacity dimension of the boundary of nonspherical buildings, which is an important class of CAT(0) spaces.

\begin{theorem}
	The capacity dimension of the boundary of any nonspherical building is equal to the capacity dimension of the boundary of an apartment in the building.
\end{theorem}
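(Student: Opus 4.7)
My plan is to prove the two inequalities $cdim(\partial A) \le cdim(\partial B)$ and $cdim(\partial B) \le cdim(\partial A)$ separately. The first is essentially formal, while the second requires exploiting the building-theoretic retractions and is where the main effort lies.

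For the first inequality, each apartment $A$ embeds isometrically in the building $B$ as a convex subspace in the CAT(0) metric. Since Moran's metric is built intrinsically from the CAT(0) structure of asymptotic geodesic rays, the inclusion $\partial A \hookrightarrow \partial B$ is an isometric embedding in Moran's metric. Capacity dimension is monotone under isometric subspace inclusions: given a witness cover $\mathcal{U}$ of $\partial B$ with $\mathrm{order}(\mathcal{U}) \le n+1$, $\mathrm{mesh}(\mathcal{U}) \le \lambda$, and $\mathcal{L}(\mathcal{U}) \ge c\lambda$, the restriction $\{U \cap \partial A : U \in \mathcal{U}\}$ satisfies the same three bounds on $\partial A$, because any metric ball in $\partial A$ is the intersection of a metric ball in $\partial B$ with $\partial A$. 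This gives $cdim(\partial A) \le cdim(\partial B)$.

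For the reverse inequality, the main tool is the family of building retractions $\rho_{A,c} : B \to A$ indexed by chambers $c$ of $A$. Each $\rho_{A,c}$ is $1$-Lipschitz in the CAT(0) metric and restricts to an isometry on any apartment containing $c$. I would first verify that each $\rho_{A,c}$ extends continuously to the boundaries and that this extension restricts to the natural isometric identification on the boundary of every apartment through $c$. Given a good cover $\mathcal{U}$ of $\partial A$ realizing $cdim(\partial A) \le n$, I would then assemble a cover of $\partial B$ by combining pullbacks $\bar\rho_{A,c}^{-1}(\mathcal{U})$ over a carefully chosen family of chambers.

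The main obstacle is to choose the family so that the combined cover retains a bounded order while preserving the Lebesgue number up to a uniform constant. A single retraction can collapse distances on apartments not containing its base chamber, destroying the Lebesgue number away from $\partial A$; using too many retractions inflates the order. To navigate this, I would aim to prove a local flattening lemma: for every $\xi \in \partial B$ and every sufficiently small $r > 0$, there is an apartment $A_\xi$ with $\xi \in \partial A_\xi$ such that the $r$-ball around $\xi$ in $\partial B$ is contained, up to a uniform multiplicative constant, in $\partial A_\xi$. The nonspherical hypothesis is essential here, since it guarantees that apartments are infinite Coxeter complexes whose boundaries genuinely surround each $\xi$. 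With such a lemma in hand, the cover of $\partial B$ would be obtained by copying $\mathcal{U}$ locally into each $\partial A_\xi$ via the appropriate retraction, and the bounds on order, mesh, and Lebesgue number all descend from the corresponding bounds on $\mathcal{U}$. Proving the local flattening property and checking that the local copies patch into a globally coherent cover of $\partial B$ is where the bulk of the technical work will lie.
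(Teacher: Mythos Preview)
Your treatment of the easy inequality $cdim(\partial A)\le cdim(\partial B)$ is fine and matches the paper. The problem is in your plan for the reverse inequality.

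The \emph{local flattening lemma} you propose is false. Take the simplest thick nonspherical building, a regular tree $T$. Any apartment $A_\xi$ is a bi-infinite geodesic line, so $\partial A_\xi$ consists of exactly two points, while $\partial T$ is a Cantor set. No ball in $\partial T$, however small, is contained in (or even comparable to) any $\partial A_\xi$. The phenomenon persists in all thick buildings: the boundary of an apartment is always nowhere dense in the boundary of the building, so there is no hope of covering a metric neighbourhood of $\xi$ in $\partial B$ by the boundary of a single apartment. Consequently the patching scheme you outline cannot get started, and you have no mechanism to control the order of the combined cover.

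The paper avoids this difficulty by a quite different device. It fixes \emph{one} apartment $\Sigma$ and \emph{one} retraction $\rho=\rho_{\Sigma,C}$, and pulls back a good cover of $\partial\Sigma$ through $\rho$. You are right that a single retraction destroys separation on apartments not containing $C$; the paper repairs this not by changing the retraction but by passing to spheres of finite radius, taking preimages of $\tfrac{A}{2}$-neighbourhoods of the cover sets, and then splitting each preimage into its \emph{connected components}. Separation in $\Delta$ is then forced either by the $1$-Lipschitz property of $\rho$ (when the images lie in different cover sets) or by the fact that a geodesic between points in different components must exit the preimage of the neighbourhood (when the images lie in the same set). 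The mesh bound comes from a building-theoretic lemma (Lemma~\ref{lem} in the paper, adapted from Dranishnikov--Januszkiewicz) asserting that any path-connected component of $\rho^{-1}(U)$ has diameter at most $S\cdot\mathrm{diam}(U)+M$ for constants depending only on the chamber. This linear control on component diameters is the key ingredient your proposal is missing.
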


In particular, with the result on the asymptotic dimension of buildings in \cite{DS}, we have the following equality.
\begin{corollary}
	Let $X$ be a Euclidean or hyperbolic building. Then 
	$$asdim(X)=cdim\partial X +1.$$
\end{corollary}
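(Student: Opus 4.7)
The plan is to combine the preceding theorem on buildings with the asymptotic-dimension computation of \cite{DS}. Let $A \subset X$ be an apartment. If $X$ is a Euclidean (respectively hyperbolic) building of rank $n$, then $A$ is isometric to the model space $\mathbb{E}^n$ (respectively $\mathbb{H}^n$). By the preceding theorem, $cdim\partial X = cdim\partial A$, so the corollary reduces to verifying two separate computations: $cdim\partial A = n-1$ on the one hand, and $asdim X = n$ on the other.

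The second equality is exactly the content of \cite{DS}. For the first, in the hyperbolic case I would apply the first theorem of the paper, which identifies the capacity dimensions under the visual metric and Moran's metric on a cobounded proper $\delta$-hyperbolic CAT(0) space. This reduces the computation to $cdim_v(\partial\mathbb{H}^n)$ with the visual metric, which is a standard fact: the visual boundary of $\mathbb{H}^n$ is bi-Lipschitz to the round sphere $S^{n-1}$, and the capacity dimension of the round sphere equals its topological dimension $n-1$. In the Euclidean case, $\mathbb{E}^n$ is not hyperbolic, so the first theorem does not apply; instead one must examine Moran's metric on $\partial\mathbb{E}^n$ directly from Definition \ref{Moranmetric}. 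The expectation is that Moran's construction on $\mathbb{E}^n$ produces a metric bi-Lipschitz to the angular (Tits) metric on $S^{n-1}$, so that again $cdim\partial\mathbb{E}^n = n-1$.

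Combining these pieces gives $asdim X = n = (n-1)+1 = cdim\partial X + 1$, as claimed. The main obstacle is the Euclidean case of the first computation: one has to unwind Moran's construction on a non-hyperbolic CAT(0) space and verify the expected bi-Lipschitz equivalence with the round sphere. The hyperbolic case is cleaner because the first theorem of the paper translates everything to the well-understood visual metric, and no new boundary analysis is needed there.
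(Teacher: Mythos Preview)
Your proposal is correct and matches the paper's approach: the paper gives no formal proof of the corollary, only the Remark that it follows once the equality $asdim = cdim\partial + 1$ is known in the apartment, combined with Theorem~\ref{thm} and the asymptotic-dimension result of \cite{DS}. The Euclidean ``obstacle'' you flag dissolves on a one-line computation: for rays at angle $\theta$ in $\mathbb{E}^n$ one has $d(\gamma(t),\gamma'(t)) = 2t\sin(\theta/2)$, so Moran's metric is $d_{A,0}(\gamma,\gamma') = \tfrac{2}{A}\sin(\theta/2)$, which is bi-Lipschitz to the round metric on $S^{n-1}$.
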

\begin{remark}
	In fact, the Corollary is true as long as the equality holds in the apartment of the building.
\end{remark} 

\section {Preliminaries}
In this section, we will introduce the preliminaries that we will use.

\subsection{$\delta$-hyperbolic spaces and the visual metric}
The following introduction to $\delta$-hyperbolic spaces is from \cite{GH}. Another good reference is \cite{BH}.

Throughout this section, we assume $X$ is a proper geodesic metric space. We denote the distance between two points $y, z\in X$ by $|y-z|$ or $d(y,z)$.
\begin{definition}\label{gromovproduct}
	Given a base point $x\in X$, the Gromov Product of two points $y, z\in X$ is defined by
	$$(y|z)_x = \frac12( |y-x| + |z-x| - |z-y| ).$$
	We write $(y|z)$ if there is no ambiguity about the base point. 
\end{definition}

We can extend the definition of the Gromov product to the boundary $\partial X$.

\begin{definition}
	Let $X$ be a proper hyperbolic space. For any $a, b \in \partial X$, the Gromov Product in $\partial X$ is defined by
	$$(a|b) = \sup\liminf_{i,j\rightarrow\infty}(x_i|y_j)$$
	where the supremum is taken over all sequences $(x_i)_{i\ge 1}$ tending towards $a$ and $(y_j)_{j\ge 1}$ tending towards $b$.
\end{definition} 

\begin{proposition}\label{liminf}
	Let $X$ be a proper $\delta$-hyerpoblic space and $a,b\in \partial X$. Then for all sequences $x_i\rightarrow a$ and $y_j\rightarrow b$, we have 
	$$(a|b)-2\delta\le \liminf_{i,j\rightarrow\infty}(x_i|y_j)\le(a|b).$$
\end{proposition}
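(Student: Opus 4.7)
The plan is to treat the two inequalities separately. The upper bound $\liminf_{i,j\to\infty}(x_i|y_j)\le (a|b)$ is immediate from the definition: $(a|b)$ is the supremum over all sequences $x_i'\to a$, $y_j'\to b$ of their liminf, and the specific pair $(x_i),(y_j)$ is one such choice, so its liminf cannot exceed the sup.

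The real content is the lower bound. I would fix $\varepsilon>0$ and choose near-optimal sequences $x_i'\to a$ and $y_j'\to b$ with $\liminf_{i,j\to\infty}(x_i'|y_j')\ge (a|b)-\varepsilon$. The key tool is the four-point formulation of $\delta$-hyperbolicity, namely $(u|w)_x\ge \min\{(u|v)_x,(v|w)_x\}-\delta$ for any three points $u,v,w$ and base point $x$. Applying this twice, first inserting $y_j'$ between $x_i$ and $y_j$, then inserting $x_i'$ between $x_i$ and $y_j'$, yields
$$(x_i|y_j)\ \ge\ \min\bigl\{(x_i|x_i'),\,(x_i'|y_j'),\,(y_j'|y_j)\bigr\}-2\delta.$$
Now I would invoke the standard fact that if two sequences converge to the same boundary point, their pairwise Gromov product tends to infinity; thus $(x_i|x_i')\to\infty$ and $(y_j'|y_j)\to\infty$ as $i,j\to\infty$. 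Consequently, for all sufficiently large $i,j$ the minimum on the right is realized by $(x_i'|y_j')$. Taking $\liminf$ gives
$$\liminf_{i,j\to\infty}(x_i|y_j)\ \ge\ \liminf_{i,j\to\infty}(x_i'|y_j')-2\delta\ \ge\ (a|b)-\varepsilon-2\delta,$$
and letting $\varepsilon\to 0$ completes the argument.

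The main obstacle, such as it is, is the bookkeeping in the iterated four-point inequality: one needs to verify that applying the $\delta$-inequality twice produces the loss of exactly $2\delta$ rather than more, and that the ``negligible'' terms $(x_i|x_i')$ and $(y_j'|y_j)$ really dominate in the min simultaneously in $i$ and $j$. Both are routine once one writes out the definition of convergence to a boundary point via divergence of Gromov products, so I do not expect any genuine difficulty here; the sharpness of the constant $2\delta$ comes precisely from the two applications of the thin-triangle inequality.
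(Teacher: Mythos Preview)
Your argument is correct and is in fact the standard proof (as in Ghys--de la Harpe or Bridson--Haefliger). Note, however, that the paper does not supply its own proof of this proposition: it is stated in the preliminaries as a known fact from \cite{GH}, so there is no ``paper's proof'' to compare against. Your write-up would serve perfectly well as a self-contained justification, and the bookkeeping you flag (two applications of the four-point inequality costing exactly $2\delta$, and the diagonal terms $(x_i|x_i')$, $(y_j'|y_j)$ tending to infinity because equivalent sequences have diverging mutual Gromov product) is handled correctly.
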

Now we will construct a metric on the boundary $\partial X$. Fix a real number $\epsilon>0$, for any $a, b\in \partial X$, define
$$\rho_\epsilon(a,b)=e^{-\epsilon(a|b)}.$$
However, $\rho_\epsilon$ may not define a metric. We will modify $\rho_\epsilon$ to define a distance on $\partial X$. 

A chain between $a$ and $b$ in $\partial X$ is a finite sequence $a=a_0, a_1,\ldots, a_n=b$ of points in $\partial X$. Denote the set of chains between $a$ and $b$ by $C_{a,b}$, and let
$$\rho_\epsilon(a_0,a_1,\ldots,a_n) = \sum_{i=1}^{n}\rho_\epsilon(a_{i-1},a_i).$$
Then define 
$$d_{\epsilon}(a,b) = \inf\{\rho_\epsilon(c) : c\in C_{a,b}\}.$$
It turns out $d_{\epsilon}$ is a metric on $\partial X$.
\begin{proposition}\label{visualmetric}\cite[Proposition 10]{GH}
	Fix $\epsilon$ and let $\epsilon' = e^{\epsilon\delta}-1$. If $\epsilon'\le\sqrt{2}-1$, then $d_\epsilon$ is a distance on $\partial X$ and we have
	$$(1-2\epsilon')\rho_\epsilon(a,b)\le d_\epsilon(a,b)\le \rho_\epsilon(a,b)$$
	for all $a,b\in\partial X$. 
\end{proposition}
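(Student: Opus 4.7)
The plan is to prove the two inequalities first and then read off the metric axioms as essentially free consequences. The upper bound $d_\epsilon(a,b) \leq \rho_\epsilon(a,b)$ is immediate: the two-term chain $(a,b)$ lies in $C_{a,b}$ and contributes cost $\rho_\epsilon(a,b)$, so the infimum can only be smaller.

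The essential preliminary is a quasi-ultrametric inequality for $\rho_\epsilon$ on $\partial X$: for all $a,b,c \in \partial X$,
$$\rho_\epsilon(a,c) \leq (1+\epsilon')\max\{\rho_\epsilon(a,b),\,\rho_\epsilon(b,c)\}.$$
I would obtain this by exponentiating the boundary Gromov-product inequality $(a|c) \geq \min\{(a|b),(b|c)\} - \delta$, which follows from the analogous inequality for interior points combined with Proposition \ref{liminf} to control the loss incurred when passing to the boundary. The factor $1+\epsilon' = e^{\epsilon\delta}$ is precisely what exponentiation of a $\delta$-loss produces.

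For the lower bound I would prove by strong induction on $n$ the estimate
$$\rho_\epsilon(a_0,a_n) \leq \frac{1}{1-2\epsilon'}\,\rho_\epsilon(a_0,a_1,\ldots,a_n)$$
for every chain $(a_0,\ldots,a_n)$, since taking the infimum over chains then yields $\rho_\epsilon(a,b) \leq d_\epsilon(a,b)/(1-2\epsilon')$. The base case $n=1$ is trivial. For the inductive step, write $S = \rho_\epsilon(a_0,\ldots,a_n)$ and choose the largest index $p$ with $\sum_{i=1}^{p}\rho_\epsilon(a_{i-1},a_i) \leq S/2$; by maximality the right tail $\sum_{i=p+2}^{n}\rho_\epsilon(a_{i-1},a_i)$ is also at most $S/2$, and the middle edge trivially satisfies $\rho_\epsilon(a_p,a_{p+1}) \leq S$. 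Applying the induction hypothesis to the two subchains $(a_0,\ldots,a_p)$ and $(a_{p+1},\ldots,a_n)$ and then using the quasi-ultrametric inequality twice (once at $a_p$, once at $a_{p+1}$) reduces the claim to checking that $(1+\epsilon')^2 \leq 2$, which is exactly the condition $\epsilon' \leq \sqrt{2}-1$. This is the step I expect to be the main technical obstacle, and the precise numerical hypothesis in the statement is engineered so that the induction just barely closes.

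Once both inequalities are in hand, the metric axioms are straightforward. Symmetry of $d_\epsilon$ follows from symmetry of $\rho_\epsilon$ and the definition via chains; the triangle inequality follows by concatenating nearly optimal chains from $a$ to $b$ and from $b$ to $c$; and positive definiteness reduces, via the lower bound, to the observation that $\rho_\epsilon(a,b) > 0$ whenever $a \neq b$, which holds because the Gromov product of two distinct boundary points is finite.
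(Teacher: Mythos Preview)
The paper does not give its own proof of this proposition: it is quoted verbatim from \cite{GH} and used as a black box. So there is nothing in the paper to compare against; your proposal is effectively a reconstruction of the original Ghys--de la Harpe argument, and it is the correct one. The chain-splitting induction (choose the largest $p$ with left partial sum $\le S/2$, apply the induction hypothesis to the two outer subchains, and use the quasi-ultrametric inequality twice to bridge across $a_p$ and $a_{p+1}$) is exactly the standard mechanism, and the numerical condition $(1+\epsilon')^2\le 2$ is indeed the binding constraint, arising from the case where the maximum falls on one of the outer pieces bounded by $\tfrac{S}{2(1-2\epsilon')}$.

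One point to tighten when you write it out in full: the passage from the interior four-point inequality $(x|z)\ge\min\{(x|y),(y|z)\}-\delta$ to the boundary version is not entirely free. With the sup--liminf definition of $(a|b)$ used here and Proposition~\ref{liminf}, a naive limit argument can cost an extra additive $2\delta$, which would change $1+\epsilon'=e^{\epsilon\delta}$ to $e^{3\epsilon\delta}$ and break the stated constant. To land exactly on $e^{\epsilon\delta}$ you should follow the Ghys--de la Harpe bookkeeping carefully (taking the supremum over representing sequences at the right moment) rather than simply invoking Proposition~\ref{liminf} termwise. This is a matter of constants, not of strategy; the architecture of your argument is sound.
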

The metric $d_\epsilon$ is called the visual metric on $\partial X$, we may also denote it by $d_v$.

%


\subsection{CAT(0) Space and the conical metric}
We will introduce the conical metric on the boundary of CAT(0) spaces. For a detailed introduction of CAT(0) space and its boundary, please see \cite{BH}. There are various ways to define metrics on $\partial X$. We will define the one that works best for our purpose. It was first introduced by D. Osajda and was used by D. Osajda and J. Swiatkowski in \cite{OS}. Later, M. A. Moran proved some important properties in \cite{Moran2} that make this metric significant. See \cite{Moran} for more details of this metric.

\begin{definition}[The conical metric]\label{Moranmetric}
	Let $X$ be a proper CAT(0) space. Fix a basepoint $x_0$ and choose $A>0$. For $[\gamma], [\gamma']\in \partial X$, let $\gamma,\gamma':[0,\infty)\rightarrow X$ be the geodesic rays based at $x_0$ that represent $[\gamma], [\gamma']$ respectively. Let $t\in (0,\infty)$ be such that $d(\gamma(t),\gamma'(t))=A$. If such $t$ does not exist set $t=\infty$. Then, define $d_{A,x_0}:\partial X\times\partial X\rightarrow \mathbb{R}$ by
	$$d_{A,x_0}([\gamma],[\gamma'])=\frac1t.$$
\end{definition}

\begin{lemma}\cite[Lemma 3.3.1]{Moran2}
	Let $X$ be a CAT(0) space with base point $x_0$, then for any $A>0$, $d_{A,x_0}$ is a metric on $\partial X$.
\end{lemma}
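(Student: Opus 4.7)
The plan is to establish that $d_{A,x_0}$ satisfies the three metric axioms, with the triangle inequality being the nontrivial part. The main tool throughout will be the convexity of the distance function between geodesics in a CAT(0) space: if $\gamma, \gamma'$ are two unit-speed geodesic rays issuing from $x_0$, then the function
$$f(t) := d(\gamma(t), \gamma'(t))$$
is convex, non-negative, and vanishes at $t = 0$.

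First I would verify that the defining equation $f(t) = A$ has at most one solution in the range where it matters, so that $d_{A,x_0}$ is well-defined. Since $f$ is convex with $f(0)=0$, the slope $f(t)/t$ is non-decreasing, so $f$ itself is non-decreasing. If $[\gamma] \ne [\gamma']$, the uniqueness of geodesic rays from a base point in a CAT(0) space forces $\gamma \ne \gamma'$, and then the same convexity/slope argument shows that $f(t) \to \infty$; on the complement of the (possibly empty) initial interval where $f \equiv 0$, the function $f$ is strictly increasing and continuous, so it hits $A$ at a unique $t \in (0,\infty)$. If $[\gamma]=[\gamma']$, then $\gamma=\gamma'$ and $f \equiv 0$, so by convention $t=\infty$. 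This yields symmetry (immediate from the symmetric role of $\gamma,\gamma'$) and the non-degeneracy condition $d_{A,x_0}([\gamma],[\gamma'])=0 \iff [\gamma]=[\gamma']$.

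The triangle inequality is the main obstacle, and the key idea is a convexity-plus-harmonic-mean trick. Given three rays $\gamma_1, \gamma_2, \gamma_3$ from $x_0$ with corresponding times $t_{12}, t_{23}, t_{13}$, I want to show $1/t_{13} \le 1/t_{12} + 1/t_{23}$. Set
$$T := \frac{t_{12}\,t_{23}}{t_{12}+t_{23}}, \qquad \text{so that } \frac{1}{T} = \frac{1}{t_{12}} + \frac{1}{t_{23}}.$$
Note $T \le t_{12}$ and $T \le t_{23}$. Writing $f_{ij}(t) = d(\gamma_i(t),\gamma_j(t))$, convexity of $f_{12}$ together with $f_{12}(0)=0$ and $f_{12}(t_{12})=A$ gives $f_{12}(T) \le (T/t_{12})\,A$, and likewise $f_{23}(T) \le (T/t_{23})\,A$. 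By the triangle inequality in $X$,
$$f_{13}(T) \;\le\; f_{12}(T) + f_{23}(T) \;\le\; A\,T\left(\frac{1}{t_{12}} + \frac{1}{t_{23}}\right) \;=\; A.$$
Because $f_{13}$ is non-decreasing and attains the value $A$ precisely at $t_{13}$, the inequality $f_{13}(T) \le A$ forces $T \le t_{13}$, which is the required inequality after taking reciprocals.

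Finally I would handle the degenerate cases where $t_{12}=\infty$ or $t_{23}=\infty$. These reduce to saying one of $[\gamma_1]=[\gamma_2]$ or $[\gamma_2]=[\gamma_3]$ holds, whence $t_{13}$ equals the remaining finite time and the inequality becomes an equality. I expect the main subtle point to be the well-definedness step, since one must rule out multiple intersections of $f$ with the level $A$; convexity combined with $f(0)=0$ is what saves us here, and it is also precisely what powers the harmonic-mean argument for the triangle inequality.
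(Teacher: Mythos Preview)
Your argument is correct. The paper does not actually supply a proof of this lemma; it simply cites Moran's thesis \cite[Lemma~3.3.1]{Moran2} and moves on, so there is no in-paper proof to compare against directly. That said, your approach is exactly the standard one and is fully consistent with the tools the paper itself records: the inequality $d(\gamma(s),\gamma'(s)) \le \tfrac{s}{t}\, d(\gamma(t),\gamma'(t))$ for $0<s\le t$ (Lemma~\ref{ratio} in the paper) is precisely the ``convexity with $f(0)=0$'' statement you invoke, and it drives both your well-definedness argument and the harmonic-mean estimate for the triangle inequality. The only mildly delicate step---deducing $T\le t_{13}$ from $f_{13}(T)\le A$---is handled correctly once you observe that $f_{13}$ is strictly increasing on the set where it is positive, which you do establish.
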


\begin{lemma}
	The topology induced by the $d_{A,x_0}$ metric on $\partial X$ is equivalent to the visual topology $\mathcal{T}_{x_0}$ on $\partial X$.
\end{lemma}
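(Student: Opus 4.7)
The plan is to identify convergence in the Moran metric $d_{A,x_0}$ with convergence in the visual (cone) topology $\mathcal{T}_{x_0}$. This suffices, since both topologies are first countable---the Moran topology is metric by the preceding lemma, and the cone topology on the boundary of a proper CAT(0) space is itself metrizable (see \cite{BH}). Recall that $[\gamma_n] \to [\gamma]$ in $\mathcal{T}_{x_0}$ exactly when the $x_0$-based representative rays satisfy $d(\gamma_n(T), \gamma(T)) \to 0$ for every fixed $T > 0$, equivalently uniform convergence on compact subsets of $[0,\infty)$ since geodesic rays are $1$-Lipschitz.

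The key geometric input is the standard CAT(0) fact that, for any two geodesic rays $\gamma, \gamma'$ issuing from $x_0$, the function $f(t) := d(\gamma(t), \gamma'(t))$ is convex with $f(0) = 0$. Hence $f$ is non-decreasing and the quotient $f(t)/t$ is also non-decreasing on $(0,\infty)$; in particular the time $t$ at which $f(t) = A$ is unique (when it exists), so $t_n$ in the forthcoming argument is unambiguous. Granted this, the forward direction is easy: if $[\gamma_n] \to [\gamma]$ in $\mathcal{T}_{x_0}$, then for any $T > 0$ and all sufficiently large $n$ one has $d(\gamma_n(T), \gamma(T)) < A$, so the time $t_n$ at which $d(\gamma_n(t_n), \gamma(t_n)) = A$ must exceed $T$, yielding $d_{A, x_0}([\gamma_n], [\gamma]) = 1/t_n < 1/T$. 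Letting $T$ grow gives convergence in the Moran metric.

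The main obstacle is the reverse direction. If $d_{A, x_0}([\gamma_n], [\gamma]) \to 0$ then $t_n \to \infty$, but using only the monotonicity of $f$ yields the useless bound $d(\gamma_n(T), \gamma(T)) \le A$ for $T < t_n$. The resolution is the ratio monotonicity: for $n$ large enough that $t_n > T$,
$$\frac{d(\gamma_n(T), \gamma(T))}{T} \;\le\; \frac{d(\gamma_n(t_n), \gamma(t_n))}{t_n} \;=\; \frac{A}{t_n},$$
so $d(\gamma_n(T), \gamma(T)) \le AT/t_n \to 0$. This gives the required pointwise convergence $\gamma_n(T) \to \gamma(T)$ for every $T$, hence $\mathcal{T}_{x_0}$-convergence, completing the argument.
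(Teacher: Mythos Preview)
The paper does not actually supply a proof of this lemma; it is listed among the preliminary facts about Moran's metric drawn from \cite{Moran2}, so there is no argument in the paper to compare against. That said, your proof is correct: reducing to sequential convergence via first countability is legitimate, the forward direction is immediate from monotonicity of $t\mapsto d(\gamma(t),\gamma'(t))$, and for the reverse direction you invoke precisely the CAT(0) ratio inequality that the paper records as Lemma~\ref{ratio} to get $d(\gamma_n(T),\gamma(T))\le AT/t_n\to 0$. Your side remark that the level set $\{t:f(t)=A\}$ is a single point also checks out, since convexity with $f(0)=0$ forces the secant slope from the origin to be strictly positive once $f>0$, ruling out any plateau at height $A$.
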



\begin{lemma}
	Let $X$ be a proper CAT(0) space. For any $A,A'>0$, the identity map on the boundary $id_{\partial X}: (\partial X, d_{A,x_0})\rightarrow (\partial X, d_{A',x_0})$ is a quasi-symmetry.
\end{lemma}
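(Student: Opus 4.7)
The plan is to prove something slightly stronger than quasi-symmetry, namely that the identity map is bi-Lipschitz with constant $\max(A/A',\, A'/A)$, which immediately implies quasi-symmetry. The key input is the CAT(0) convexity of the distance function between two geodesic rays emanating from the common basepoint $x_0$.

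Given distinct $[\gamma], [\gamma'] \in \partial X$ with representative rays $\gamma, \gamma'$ from $x_0$, set $f(t) = d(\gamma(t), \gamma'(t))$. The CAT(0) inequality (applied to the comparison quadrilateral with vertices $\gamma(0)=\gamma'(0)$, $\gamma(t)$, $\gamma'(t)$) forces $f$ to be convex on $[0,\infty)$, and clearly $f(0)=0$. Since $[\gamma]\neq[\gamma']$, the function $f$ is unbounded (this is the standard characterization of equivalence of rays in a CAT(0) space), so for every $B>0$ there is a unique $t_B$ with $f(t_B)=B$, and $d_{B,x_0}([\gamma],[\gamma'])=1/t_B$. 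A general convex function vanishing at $0$ has non-decreasing slope $f(t)/t$, so the map $t\mapsto f(t)/t$ is non-decreasing on $(0,\infty)$.

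Without loss of generality assume $A\le A'$. Monotonicity of $f$ gives $t_A\le t_{A'}$, and monotonicity of $f(t)/t$ yields
$$\frac{A}{t_A}=\frac{f(t_A)}{t_A}\;\le\;\frac{f(t_{A'})}{t_{A'}}=\frac{A'}{t_{A'}},$$
which rearranges to $t_{A'}/t_A\le A'/A$. Combining with $t_{A'}/t_A\ge 1$ and taking reciprocals,
$$d_{A',x_0}([\gamma],[\gamma'])\;\le\;d_{A,x_0}([\gamma],[\gamma'])\;\le\;\frac{A'}{A}\,d_{A',x_0}([\gamma],[\gamma']).$$
Thus the identity is bi-Lipschitz between the two Moran metrics, and in particular is a quasi-symmetry with linear control function $\eta(s)=(A'/A)\,s$.

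I do not see a genuine obstacle in this argument: the convexity of $f$ is a direct consequence of the CAT(0) condition, and the monotonicity of $f(t)/t$ for convex $f$ with $f(0)=0$ is elementary. The only subsidiary point worth stating explicitly is that inequivalent rays from $x_0$ have unbounded $f$, so that both $t_A$ and $t_{A'}$ are genuinely finite and the reciprocals make sense.
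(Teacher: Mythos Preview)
The paper does not actually supply its own proof of this lemma; it is stated in the preliminaries section as one of Moran's results and left unproved. Your argument is correct, and in fact it proves something stronger than what is asserted: the identity is bi-Lipschitz with constant $\max(A/A',\,A'/A)$, from which quasi-symmetry follows immediately with linear control $\eta$.

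The key step you use---that $t\mapsto f(t)/t$ is non-decreasing for $f(t)=d(\gamma(t),\gamma'(t))$---is precisely the content of Lemma~\ref{ratio} in the paper, so your proof dovetails nicely with the tools already on hand; indeed one could bypass the full convexity statement and cite that lemma directly both for the monotonicity of $f(t)/t$ and (taking $s\le t$) for the monotonicity of $f$ itself. One cosmetic remark: since both rays emanate from $x_0$, the comparison figure is a triangle rather than a quadrilateral, but this does not affect the argument since the convexity of $t\mapsto d(\gamma(t),\gamma'(t))$ between any two geodesics in a CAT(0) space is standard.
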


\begin{lemma}
	Suppose $X$ is a complete CAT(0) space. For any $x_0,x_0'\in X$, the identity map on the boundary $id_{\partial X}: id_{\partial X}: (\partial X, d_{A,x_0})\rightarrow id_{\partial X}: (\partial X, d_{A,x_0'})$ is a quasi-symmetry.
\end{lemma}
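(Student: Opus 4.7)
The plan is to first establish, via a direct CAT(0) convexity calculation, that $d_{A,x_0}$ and $d_{A,x_0'}$ are bilipschitz equivalent whenever $A$ is large compared to $D := d(x_0,x_0')$, and then use the preceding lemma (change of scale at a fixed basepoint is a quasi-symmetry) to absorb the specific value of $A$. Throughout, I will write $\gamma_\alpha$ (respectively $\gamma'_\alpha$) for the unique geodesic ray from $x_0$ (respectively $x_0'$) representing $[\alpha]\in\partial X$; existence and uniqueness follow from the completeness of $X$ together with the CAT(0) assumption.

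Two standard CAT(0) facts will drive the comparison. First, for each $[\alpha]$, the function $s\mapsto d(\gamma_\alpha(s),\gamma'_\alpha(s))$ is convex on $[0,\infty)$ and bounded (because $\gamma_\alpha$ and $\gamma'_\alpha$ are asymptotic), hence nonincreasing, so it is bounded above by its value $D$ at $s=0$. Second, for any two rays $\sigma_1,\sigma_2$ emanating from a common basepoint, the quotient $s\mapsto d(\sigma_1(s),\sigma_2(s))/s$ is nondecreasing, by CAT(0) comparison with a flat triangle.

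Assume $A>2D$ and set $f(s)=d(\gamma_\alpha(s),\gamma_\beta(s))$, $g(s)=d(\gamma'_\alpha(s),\gamma'_\beta(s))$. Applying the triangle inequality together with the first fact above gives $|f(s)-g(s)|\le 2D$ for all $s$. Let $t,t'\in(0,\infty)$ be the unique values with $f(t)=A=g(t')$ (both are finite when $[\alpha]\ne[\beta]$, since then $f,g$ diverge). Because $g(t)\in[A-2D,A+2D]$, splitting into the cases $g(t)\ge A$ and $g(t)<A$ and invoking the monotonicity of $g(s)/s$ in the appropriate direction will yield
\begin{equation*}
\frac{A-2D}{A}\,d_{A,x_0}([\alpha],[\beta]) \;\le\; d_{A,x_0'}([\alpha],[\beta]) \;\le\; \frac{A+2D}{A}\,d_{A,x_0}([\alpha],[\beta]).
\end{equation*}
Thus the identity $(\partial X,d_{A,x_0})\to(\partial X,d_{A,x_0'})$ is bilipschitz, and in particular quasi-symmetric, provided $A>2D$.

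To handle an arbitrary $A>0$, choose $A''>2D$ and factor the identity as
\begin{equation*}
(\partial X,d_{A,x_0}) \longrightarrow (\partial X,d_{A'',x_0}) \longrightarrow (\partial X,d_{A'',x_0'}) \longrightarrow (\partial X,d_{A,x_0'}),
\end{equation*}
in which the two outer arrows are quasi-symmetries by the preceding lemma and the middle one is bilipschitz by the previous paragraph. Composition of quasi-symmetries is again a quasi-symmetry, which completes the proof. The principal obstacle is the intermediate bilipschitz step: one has to keep careful track of which direction of the monotonicity inequality for $g(s)/s$ applies depending on the sign of $g(t)-A$, and verify that the two cases combine symmetrically into the single estimate above; everything else is essentially bookkeeping.
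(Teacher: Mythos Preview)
The paper does not actually supply a proof of this lemma: it is stated in the preliminaries section as one of several properties of Moran's metric imported from \cite{Moran2}, with no argument given. So there is nothing in the paper to compare your proposal against line by line.

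That said, your argument is correct and gives a clean self-contained proof. The key inequality $|f(s)-g(s)|\le 2D$ is justified exactly as you say (convex, bounded, hence nonincreasing distance between asymptotic rays, plus the triangle inequality), and the case split on the sign of $g(t)-A$ does yield the two-sided bilipschitz bound: in the case $g(t)\ge A$ one has $t'\le t$, and the monotonicity of $g(s)/s$ gives $A/t'=g(t')/t'\le g(t)/t\le (A+2D)/t$, while the trivial bound $1/t'\ge 1/t\ge\frac{A-2D}{A}\cdot\frac1t$ covers the other inequality; the case $g(t)<A$ is symmetric. One should note that the parameter $t$ with $f(t)=A$ is indeed unique because a convex function vanishing at $0$ is strictly increasing once it is positive, so there is no ambiguity in the case split. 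The reduction to large $A$ via the preceding lemma and composition of quasi-symmetries is standard. In short: the paper defers to Moran for this result, whereas you prove it directly, and your route through a bilipschitz estimate for $A>2D$ is both valid and arguably more informative than a bare citation.
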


The following theorem shows that the group of isometries of a CAT(0) space has a "nice" action on the boundary.
\begin{theorem}\cite[Theorem 3.1.5]{Moran2}
	Suppose $G$ is a finitely generated group that acts by isometries on a complete CAT(0) space $X$. Then the induced action of $G$ on $(\partial X, d_{A,x_0})$ is a quasi-symmetry. In other words, $G$ acts by quasi-symmetries on $\partial X$.
\end{theorem}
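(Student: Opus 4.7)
The plan is to reduce the claim directly to the change-of-basepoint lemma stated just above. For any $g\in G$, the isometry $g:X\to X$ sends geodesic rays based at $x_0$ to geodesic rays based at $g\cdot x_0$, so it induces a bijection of $\partial X$ by $g\cdot[\gamma] = [g\circ\gamma]$.

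First I would factor the induced map on $\partial X$ as
$$(\partial X, d_{A,x_0}) \xrightarrow{\,g\,} (\partial X, d_{A,g\cdot x_0}) \xrightarrow{\,id\,} (\partial X, d_{A,x_0}).$$
Next I would verify that the first arrow is an isometry. Given $[\gamma], [\gamma']\in \partial X$ with unit-speed representatives $\gamma, \gamma'$ issuing from $x_0$, the compositions $g\circ\gamma$ and $g\circ\gamma'$ are the unit-speed representatives of $g\cdot[\gamma]$ and $g\cdot[\gamma']$ based at $g\cdot x_0$. Because $g$ is an isometry of $X$, $d(\gamma(t),\gamma'(t)) = d(g\gamma(t),g\gamma'(t))$ for every $t$, so the times at which the separation first equals $A$ agree. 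By the definition of Moran's metric this gives
$$d_{A,g\cdot x_0}\bigl(g\cdot[\gamma],\, g\cdot[\gamma']\bigr) = d_{A,x_0}([\gamma],[\gamma']),$$
i.e.\ the first arrow is a genuine isometry.

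Finally, the previous lemma asserts that $id:(\partial X, d_{A,g\cdot x_0})\to (\partial X, d_{A,x_0})$ is a quasi-symmetry. Since isometries are quasi-symmetries and the class of quasi-symmetries is closed under composition, the induced action of $g$ on $(\partial X, d_{A,x_0})$ is a quasi-symmetry. As this holds for each $g\in G$, the group $G$ acts on $\partial X$ by quasi-symmetries, as required. The only subtlety worth highlighting is the identification of the representing rays before and after applying $g$, but this is immediate from the fact that isometries carry geodesics to geodesics; there is no substantive technical obstacle beyond invoking the change-of-basepoint lemma.
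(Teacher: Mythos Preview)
The paper does not give its own proof of this theorem; it is quoted without argument from \cite[Theorem 3.1.5]{Moran2}. So there is nothing in the paper to compare your proposal against.

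That said, your argument is correct and is exactly the natural one given the surrounding lemmas: factor the induced boundary map of $g$ as the tautological isometry $(\partial X, d_{A,x_0})\to(\partial X, d_{A,g\cdot x_0})$ followed by the change-of-basepoint identity $(\partial X, d_{A,g\cdot x_0})\to(\partial X, d_{A,x_0})$, and then invoke the preceding lemma to conclude the composite is a quasi-symmetry. Two minor remarks: (i) you never use the hypothesis that $G$ is finitely generated, and indeed it is not needed for the statement as written---each element acts as a quasi-symmetry regardless; the hypothesis is presumably a remnant of the context in Moran's thesis. (ii) The phrase ``the times at which the separation first equals $A$'' is slightly loose, since by convexity of $t\mapsto d(\gamma(t),\gamma'(t))$ in a CAT(0) space there is a unique such time (or none), so ``first'' is harmless but unnecessary.
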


There is a simple geometric property for CAT(0) spaces that we will use repeatedly. See the proof in\cite{Moran}.
\begin{lemma}\label{ratio}
	Let $(X,d)$ be a CAT(0) space and suppose $\gamma, \ \gamma':[0,\infty)\rightarrow X$ are two geodesic rays based at the same point $p\in X$. Then for $0<s\le t<\infty$, we have 
	$$d(\gamma(s),\gamma'(s))\le\frac st d(\gamma(t),\gamma'(t)).$$
\end{lemma}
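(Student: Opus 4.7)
The plan is to apply the CAT(0) comparison inequality to a single geodesic triangle and then reduce the question to an elementary fact in Euclidean geometry, namely similarity of triangles.

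First, I would form the geodesic triangle $\Delta \subset X$ with vertices $p$, $\gamma(t)$, $\gamma'(t)$, where two of the sides are exactly the initial segments $\gamma|_{[0,t]}$ and $\gamma'|_{[0,t]}$ and the third side is any geodesic joining $\gamma(t)$ to $\gamma'(t)$. Let $\bar{\Delta} \subset \mathbb{R}^2$ be the comparison triangle with vertices $\bar p$, $\overline{\gamma(t)}$, $\overline{\gamma'(t)}$, so that by construction $|\bar p - \overline{\gamma(t)}| = t = |\bar p - \overline{\gamma'(t)}|$ and $|\overline{\gamma(t)} - \overline{\gamma'(t)}| = d(\gamma(t),\gamma'(t))$. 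Let $\overline{\gamma(s)}$ and $\overline{\gamma'(s)}$ denote the points on the respective comparison sides at Euclidean distance $s$ from $\bar p$; these are the comparison points of $\gamma(s)$ and $\gamma'(s)$.

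Next, by the CAT(0) inequality applied to the pair of points $\gamma(s), \gamma'(s) \in \Delta$, we have
$$d\bigl(\gamma(s),\gamma'(s)\bigr) \;\le\; \bigl|\overline{\gamma(s)} - \overline{\gamma'(s)}\bigr|.$$
Now I would exploit the fact that $\overline{\gamma(s)}$ and $\overline{\gamma'(s)}$ lie on the two sides of $\bar\Delta$ emanating from $\bar p$, both at Euclidean distance $s$ from $\bar p$, while $\overline{\gamma(t)}$ and $\overline{\gamma'(t)}$ lie on the same two sides at distance $t$. The triangle $\bar p\, \overline{\gamma(s)}\, \overline{\gamma'(s)}$ is therefore similar to $\bar p\, \overline{\gamma(t)}\, \overline{\gamma'(t)}$ with ratio $s/t$, so
$$\bigl|\overline{\gamma(s)} - \overline{\gamma'(s)}\bigr| = \frac{s}{t}\,\bigl|\overline{\gamma(t)} - \overline{\gamma'(t)}\bigr| = \frac{s}{t}\, d\bigl(\gamma(t),\gamma'(t)\bigr).$$
Chaining these two displays yields the desired inequality.

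There is essentially no obstacle: the only thing to be careful about is that the comparison points of $\gamma(s)$ and $\gamma'(s)$ on the sides of $\bar\Delta$ are indeed the Euclidean points at distance $s$ from $\bar p$, which follows because $\gamma$ and $\gamma'$ are parametrized by arc length and the comparison map is an isometry on each side. The conclusion then reduces to a one-line Euclidean similar-triangle computation.
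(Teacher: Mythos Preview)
Your argument is correct and is the standard proof of this fact: apply the CAT(0) comparison to the triangle $p,\gamma(t),\gamma'(t)$ and then use similar triangles in the Euclidean comparison triangle. The paper itself does not prove the lemma but simply refers the reader to \cite{Moran}; your proof is precisely the argument one finds there (and in any standard CAT(0) reference such as \cite{BH}), so there is nothing to add.
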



\subsection{Dimension Theory}
In this section, we review some dimension theories that play important roles in geometric group theory. We first review some terminology.
\begin{definition}
	Let $X$ be a metric space and $\mathcal{U}$ be a cover of $X$. We define the $order(\mathcal{U})$ to be the smallest integer $n$ such that each $x\in X$ is contained in at most $n$ elements of $\mathcal{U}$. We define $mesh(\mathcal{U})=sup\{diam(U)|U\in\mathcal{U}\}$. We say the cover $\mathcal{U}$ is uniformly bounded if there exists some $\lambda$ such that $mesh(\mathcal{U})\le\lambda$. The Lebesgue number of $\mathcal{U}$ is defined as $\mathcal{L}(\mathcal{U})=\inf_{x\in X}\mathcal{L}(\mathcal{U},x)$, where $\mathcal{L}(\mathcal{U},x)=\sup_{x\in X}\{d(x,X-U)|U\in \mathcal{U}\}$.
\end{definition}

Now we introduce the topological dimension, also called the covering dimension. See for example \cite{M} for more details. Recall that a refinement of a cover $C$ of a topological space $X$ is a new cover $D$ of $X$ such that every set in $D$ is contained in some set in $C$.
\begin{definition}[Topological dimension]
	Let $X$ be a topological space. We say the topological dimension of $X$ is at most $n$, denoted by $dim X\le n$, if every open cover of $X$ has an open refinement of order at most $n+1$.
\end{definition}
For a compact metric space, the topological dimension has an equivalent definition.
\begin{definition}\label{topologicaldimension}
	Let $X$ be a compact metric space. Then $dim X\le n$ if for every (small) $\lambda>0$, there is an open cover $\mathcal{U}$ with $Mesh(\mathcal{U})\le\lambda$ and order at most $n+1$.
\end{definition}
An important dimension in geometric group theory is the asymptotic dimension.
\begin{definition}[Asymptotic dimension]
	Let $X$ be a metric space. We say that $asdim(X)\le n$ if for any (large) $\lambda$ there is a uniformly bounded cover $\mathcal{U}$ of $X$ with $order(\mathcal{U})\le n+1$ and $\mathcal{L}(\mathcal{U})\ge \lambda$.
\end{definition}
The asymptotic dimension is important because of its relation with the Novikov conjecture, see \cite{Yu} for details. 

Another important dimension is the capacity dimension or linearly-controlled dimension. It was first introduced by Buyalo in \cite{B}.
\begin{definition}[Capacity dimension]
	Let $X$ be a metric space. We say the capacity dimension is at most $n$, denoted by $cdim X\le n$, if there exists $0<c\le1$ and $\lambda_0$, such that for all $0<\lambda\le\lambda_0$, there is a cover $\mathcal{U}$ with $order(\mathcal{U})\le n+1, mesh(\mathcal{U})\le\lambda$, and $\mathcal{L}(\mathcal{U})\ge c\lambda$. The number $\frac{\mathcal{L}(\mathcal{U})}{mesh(\mathcal{U})}\ge c$ is called the capacity of $\mathcal{U}$.
\end{definition}

The capacity dimension has many equivalent definitions. Here is the one that we will also use. Recall that two sets $U$ and $U'$ are L-disjoint if $$d(U,U')=\inf\{d(x,y)|x\in U, y \in U'\}\ge L.$$
\begin{definition}
	We say $cdim(X)\le n$ if there exists $c>0$ such that for any sufficiently small $L$, there are $n+1$ families of $L-$disjoint sets that cover $X$ and are $cL$ bounded.
\end{definition}

Gromov observed that all hyperbolic groups have finite asymptotic dimension. Buyalo established a more precise bound on the asymptotic dimension.
\begin{theorem}[\cite{B}]\label{asdimlecdim}
	Let $X$ be a visual Gromov hyperbolic space. Then
	$$asdim X\le cdim \partial X +1.$$
\end{theorem}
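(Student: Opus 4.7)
The plan is to exploit the cone-over-boundary structure of a visual Gromov hyperbolic space: each point $x \in X$ far from a basepoint $x_0$ casts a ``shadow'' on $\partial X$ whose visual diameter is $\asymp e^{-\epsilon d(x_0,x)}$, and each visual ball on $\partial X$ is the shadow of a well-defined tube in $X$. Given $R > 0$, I want to produce a uniformly bounded cover of $X$ of order $\le n+2$ with Lebesgue number $\ge R$, where $n = cdim\, \partial X$. I will build this by pulling back a cover of $\partial X$, provided by the capacity dimension hypothesis, to concentric annuli in $X$ and gluing across annuli with one extra ``radial'' family.

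First I would fix a basepoint $x_0 \in X$ and a visual parameter $\epsilon$ small enough for Proposition \ref{visualmetric}, and use the $L$-disjoint formulation of capacity dimension: at every small scale $L$, $\partial X$ admits $n+1$ families of $L$-disjoint, $cL$-bounded subsets that cover $\partial X$. Let $h$ be large compared to $R$ and the hyperbolicity constants, and set $A_k = \{x \in X : kh \le d(x_0,x) \le (k+1)h\}$. For each $k$, I would apply the hypothesis at scale $L_k = e^{-\epsilon k h}$ and pull back the $n+1$ boundary families to $A_k$ via shadows. The standard hyperbolic dictionary, combining the Gromov-product estimates with Proposition \ref{visualmetric} and Lemma \ref{ratio}, gives that points $x, y \in A_k$ satisfy $d(x,y) \asymp h \cdot e^{\epsilon k h} \cdot d_v(\xi_x, \xi_y)$ up to additive $O(\delta)$ errors, where $\xi_x, \xi_y$ are any boundary representatives of the shadows. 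Hence the pulled-back families have $X$-diameter $\lesssim h$ and same-family $X$-separation $\gtrsim h$, so choosing $h$ sufficiently large compared to $R$ yields $n+1$ families of $R$-disjoint, $O(h)$-bounded subsets covering $A_k$.

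The main obstacle is gluing the per-annulus covers into $n+2$ global families rather than $2(n+1)$. To achieve this I would produce the boundary covers coherently across scales: choose the cover at scale $L_{k+1}$ to refine the cover at scale $L_k$, so that each color-$i$ set in $A_k$ matches a union of color-$i$ sets in $A_{k+1}$ along their common shadow. The color-$i$ families then stack across all $k$ into a single $R$-disjoint family of bounded cone-fragments, yielding $n+1$ ``tangential'' families. A single additional ``radial'' family, consisting of bounded pieces covering the thin interfaces $\{x : d(x_0,x) \approx kh\}$ constructed from a $h/2$-shifted copy of the annular stratification, handles the gaps and absorbs any misalignment between consecutive nested covers. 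I expect the hardest step to be producing the compatible sequence of boundary covers at the geometric scales $L_k$: one needs the capacity dimension constant $c$ to be uniform in scale, and one needs $\delta$-hyperbolicity to ensure that shadows of a fixed boundary ball taken from different heights agree up to error $O(\delta)$. Compactness of $\partial X$, which holds because $X$ is proper geodesic and visual, then allows a refinement/diagonalization argument to pick a single coherent cover at each scale meshing with the previous one.
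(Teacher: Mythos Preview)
The paper does not prove this theorem. It is quoted twice (as Theorem~\ref{asdimcdiminequality} in the introduction and as Theorem~\ref{asdimlecdim} in the preliminaries) as a result of Buyalo~\cite{B} and used as a black box; no argument for it appears anywhere in the text. There is therefore nothing in the paper to compare your sketch against.

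On the merits of your proposal itself: the broad architecture --- annular decomposition of $X$, pull back boundary covers at exponentially shrinking scales via shadows, then handle the radial direction with one extra family --- is indeed the shape of Buyalo's argument. But several of the ingredients you invoke are wrong or unjustified. First, Lemma~\ref{ratio} is a CAT(0) convexity statement and has no role in a proof about a general visual Gromov hyperbolic space. Second, the scaling $d(x,y)\asymp h\,e^{\epsilon kh}\,d_v(\xi_x,\xi_y)$ is the Euclidean-cone relation; in a $\delta$-hyperbolic space the correct dictionary for $x,y$ on the sphere of radius $kh$ is
\[
d(x,y)\;\approx\;\tfrac{2}{\epsilon}\log\bigl(e^{\epsilon kh}\,d_v(\xi_x,\xi_y)\bigr)+O(\delta),
\]
which is logarithmic, and with your choice $L_k=e^{-\epsilon kh}$ an $L_k$-separation on $\partial X$ gives \emph{no} positive lower bound on $d(x,y)$ in $A_k$; the scale has to be chosen with an extra multiplicative margin. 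Third, the step you yourself flag as hardest --- producing boundary covers at successive scales that are \emph{nested} --- is not provided by the definition of capacity dimension, and ``refinement/diagonalization using compactness'' is not a plan (nor is $\partial X$ assumed compact in the statement). Buyalo's proof does not attempt to nest the covers; it passes through the hyperbolic cone $\mathrm{Co}(\partial X)$, into which a visual hyperbolic space quasi-isometrically embeds, and colors tube families directly there.
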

In case of hyperbolic groups, Buyalo and Lebedeva proved the following equalities relating the three dimensions.
\begin{theorem}[\cite{BL}]\label{topequalcap}
	For any hyperbolic group $G$, we have 
	$$asdimG = cdim\partial G +1 = dim\partial G +1.$$
\end{theorem}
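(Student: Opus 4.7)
The theorem packages two equalities, and the natural first step is to separate them into four inequalities. Two are essentially free: $dim \partial G \le cdim \partial G$ holds for every compact metric space because any cover witnessing the capacity dimension is in particular an open cover of the correct mesh and order (compare Definition \ref{topologicaldimension}), and $asdim G \le cdim \partial G + 1$ is Theorem \ref{asdimlecdim} applied to the Cayley graph of $G$. The substantive content is therefore to prove $cdim \partial G \le dim \partial G$ and $asdim G \ge cdim \partial G + 1$.

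For $cdim \partial G \le dim \partial G$, the plan is to upgrade a topological-dimension cover into a capacity-dimension cover using the dynamics of $G$ on $\partial G$. Fix $n = dim \partial G$ and, by Definition \ref{topologicaldimension}, choose an open cover $\mathcal{U}_0$ of finite mesh $\mu_0$ and order $\le n+1$. At least one point $p \in \partial G$ lies in some $U \in \mathcal{U}_0$ with a definite margin, so a small visual ball around $p$ has positive Lebesgue number relative to $\mathcal{U}_0$. The group $G$ acts on $\partial G$ by uniformly quasi-symmetric homeomorphisms, and the North-South dynamics of hyperbolic elements yield a conformal elevator: for any prescribed target scale $\lambda$ one can find $g \in G$ whose distortion at $p$ is of order $\lambda/\mu_0$. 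Translating the local piece $g \cdot \mathcal{U}_0$ by finitely many group elements (their number bounded by the doubling constant of $\partial G$) and assembling the pieces yields a cover of $\partial G$ of mesh $\asymp \lambda$, order still $\le n+1$, and Lebesgue number $\ge c\lambda$ with $c$ independent of $\lambda$.

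For $asdim G \ge cdim \partial G + 1$, the plan is to push a large-scale cover of $G$ down to the boundary via hyperbolic shadows. Let $m = asdim G$ and, for each large $R$, take a uniformly bounded cover $\mathcal{V} = \mathcal{V}_0 \sqcup \cdots \sqcup \mathcal{V}_m$ of the Cayley graph of diameter $\le DR$ with each color class $R$-disjoint. Choose $r$ with $e^{-\epsilon r} \asymp \lambda$ the intended boundary scale, and for each $V \in \mathcal{V}_i$ meeting the sphere $S(e,r)$ take its shadow $\pi(V) \subset \partial G$, the set of $\xi$ whose geodesic ray from $e$ passes $O(\delta)$-close to $V \cap S(e,r)$. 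Standard hyperbolic geometry shows that within one color class the shadows are $c\lambda$-separated in the visual metric and have diameter $\asymp \lambda$. The delicate step is the reduction of order from $m+1$ to $m$: exploiting that the radial direction in $G$ is transverse to $\partial G$, one can absorb one color class into a coarser radial partition, giving a cover of $\partial G$ of the required order $m$ and establishing $cdim \partial G \le m-1$.

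The main obstacle is the homogeneity upgrade in the first step: extracting from a single open cover of good mesh and order one whose Lebesgue number is uniformly proportional to its mesh. This is where the argument genuinely uses that $G$ is a hyperbolic \emph{group}, not merely a visual hyperbolic metric space; the conformal elevator has no analogue for boundaries of arbitrary such spaces, which is why the equality $cdim = dim$ can fail without a cocompact isometry group acting on the interior.
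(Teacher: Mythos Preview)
The paper does not prove this statement. Theorem~\ref{topequalcap} is quoted in the preliminaries (Section~2.3) as a result of Buyalo and Lebedeva \cite{BL}, with no argument given; the paper then invokes it as a black box in the proof of the main theorem of Section~3. There is therefore no ``paper's own proof'' to compare your proposal against.

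As a standalone sketch, your plan for $cdim\partial G\le dim\partial G$ is in the spirit of the actual Buyalo--Lebedeva argument: they formalise your ``conformal elevator'' as the notion of a \emph{locally self-similar} metric space (every small ball can be rescaled with bounded distortion to unit scale), show that boundaries of hyperbolic groups have this property via the group action, and deduce $cdim=dim$ for such spaces. Your description captures the mechanism correctly, if loosely.

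Your plan for $asdim G\ge cdim\partial G+1$ is more problematic. Pushing a colored cover of $G$ to boundary shadows is a reasonable opening move, but the step you flag as ``delicate''---absorbing one color class into the radial direction to drop the order from $m+1$ to $m$---is not justified, and I do not see how to carry it out as stated: eliminating a color by a radial partition will in general destroy the $R$-disjointness of the remaining colors at the relevant scale. This is not how \cite{BL} proceed; their lower bound goes through asymptotic self-similarity of the group and the hyperbolic-cone construction rather than a direct color-reduction on shadows. So while three of your four pieces are sound, the fourth has a genuine gap at its key step.
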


For CAT(0) spaces, Moran proved the following result with the conical metric $d_c$ on the boundary.
\begin{theorem}\cite[Theorem 3.2.1]{Moran2}
	Suppose $G$ acts geometrically on a proper CAT(0) space $X$. Then $cdim(\partial X, d_c)<\infty$.
\end{theorem}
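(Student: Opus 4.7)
The plan is to build, for every sufficiently small $\lambda>0$, an open cover $\mathcal{U}_\lambda$ of $\partial X$ with $mesh(\mathcal{U}_\lambda)\le \lambda$, $\mathcal{L}(\mathcal{U}_\lambda)\ge c\lambda$ for a uniform constant $c>0$, and $order(\mathcal{U}_\lambda)\le N$ for a uniform constant $N$; this immediately gives $cdim(\partial X,d_M)\le N-1<\infty$. The idea is to pull back covers from the sphere $S_t:=S(x_0,t)\subset X$ via the radial map $\pi_t:\partial X\to S_t$ sending $[\gamma]\mapsto\gamma(t)$, where $t=1/\lambda$. Lemma \ref{ratio} is the bridge converting the geometry of $S_t$ into estimates on $d_M$.

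Concretely, fix $\lambda\le 1/A$, set $t=1/\lambda$, and choose a maximal $(A/3)$-separated subset $\{y_i\}$ of $S_t$; by maximality it is also an $(A/3)$-net. Set $U_i:=\{[\gamma]\in\partial X:d(\gamma(t),y_i)<A/2\}$. Since $\gamma(t)\in S_t$ is within $A/3$ of some $y_i$, the $U_i$ cover $\partial X$. For the mesh: if $[\gamma],[\gamma']\in U_i$ then $d(\gamma(t),\gamma'(t))<A$, and because the CAT(0) function $s\mapsto d(\gamma(s),\gamma'(s))$ is convex and nondecreasing with value $0$ at $0$, it first reaches $A$ at some $t^*\ge t$, so $d_M([\gamma],[\gamma'])=1/t^*\le\lambda$. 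For the Lebesgue estimate: given any $[\gamma]$, pick the closest net point $y_i$, hence $d(\gamma(t),y_i)\le A/3$; if $d_M([\gamma],[\gamma'])<\lambda/6$ then $t^*>6t$, and Lemma \ref{ratio} with $s=t$ and $d(\gamma(t^*),\gamma'(t^*))=A$ yields $d(\gamma(t),\gamma'(t))\le(t/t^*)A<A/6$, so $d(\gamma'(t),y_i)<A/6+A/3=A/2$ and $[\gamma']\in U_i$. Thus one can take $c=1/6$.

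The heart of the argument is the uniform order bound. A point $[\gamma]$ lies in $U_i$ iff $y_i\in B_X(\gamma(t),A/2)$, so $order(\{U_i\})\le\sup_{x\in S_t}\#\{i:y_i\in B_X(x,A/2)\}$. Because the $y_i$ are pairwise $(A/3)$-separated, this count is dominated by the $(A/3)$-packing number of an $X$-ball of radius $A/2$. Here the geometric action enters: there is a compact fundamental domain $K$ for $G\curvearrowright X$, every such ball is $G$-isometric to one centered in $K$, and properness of $X$ together with compactness of $K$ produces a uniform bound $N=N(A)$ on the packing number. This $N$ depends only on $A$ and the action, not on $t$, which closes the argument.

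The main obstacle is precisely this order bound. The mesh and Lebesgue estimates are short and structural, using only CAT(0) convexity via Lemma \ref{ratio}; but the order bound requires extracting uniform local geometry of $X$ itself, which is exactly what a geometric group action supplies. Without cocompactness one could not control the packing numbers of balls of fixed radius in $X$, spheres $S_t$ at large $t$ could develop arbitrarily complicated local structure, and the approach would break down.
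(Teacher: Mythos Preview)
The paper does not prove this theorem; it is quoted from Moran's thesis \cite{Moran2} as background in the preliminaries, so there is no proof in the paper to compare against. That said, your argument is correct and self-contained, and it is in the same spirit as Moran's original: pull back a net on the metric sphere $S_t$ through the radial map $\pi_t$, use CAT(0) convexity (Lemma~\ref{ratio}) to convert sphere distances into $d_M$-estimates, and invoke cocompactness to bound multiplicities uniformly in $t$.

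A couple of minor remarks. Your mesh and Lebesgue computations are clean; the only place requiring care is the order bound, and you have handled it properly: the key inequality $\mathrm{order}(\{U_i\})\le \sup_{x\in X}\bigl(\text{$(A/3)$-packing number of }B_X(x,A/2)\bigr)$ reduces, via the cocompact isometric action, to the $(A/3)$-packing number of the single compact set $\overline{N_{A/2}(K)}$, where $K$ is a compact fundamental domain. This is exactly where the hypothesis of a geometric action is consumed; properness of $X$ alone would give a finite packing number at each point but not a uniform one. You identified this correctly as the crux.

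The restriction $\lambda\le 1/A$ is harmless and in fact unnecessary: the argument works for all $\lambda>0$, since the convexity estimate in Lemma~\ref{ratio} imposes no lower bound on $t$. In short, your proof is valid; there is simply nothing in the present paper to set it against.
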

However, the inequality in Theorem \ref{asdimlecdim} for CAT(0) spaces is not known.


\subsection{Buildings}

The following introduction of buildings is from \cite{AB}.
\begin{definition}
	We say that $W$ is a Coxeter group and $(W,S)$ is a Coxeter system if $W$ admits the presentation
	$$\left< S; (st)^{m(s,t)}=1\right>,$$
	where $m(s,t)$ is the order of $st$ and there is one relation for each pair $s,t$ with $m(s,t)<\infty$.
\end{definition}
%

Fix a Coxeter system $(W,S)$ and denote by $l=l_S$ the length function on $W$ with respect ot $S$.
\begin{definition}
	A building of type $(W,S)$ is a pair $(\Delta,\delta)$ consisting of a nonempty set $\Delta$, whose elements are called chambers, together with a map $\delta: \Delta\times\Delta\rightarrow W$, called the Weyl distance function, such that for all $C,D\in \Delta$, the following three conditions hold:\\
	({\bf WD1}) $\delta(C,D)=1$ if and only if $C=D$.\\
	({\bf WD2}) If $\delta(C,D)=w$ and $C'\in \mathcal{C}$ satisfies $\delta(C',C)=s\in S$, then $\delta(C',D)=sw$ or $w$. If in addition, $l(sw)=l(w)+1$, then $\delta(C',D)=sw$.\\
	({\bf WD3}) If $\delta(C,D)=w$, then for any $s\in S$ there is a chamber $C'\in \Delta$ such that $\delta(C',C)=s$ and $\delta(C',D)=sw$.
\end{definition}

\begin{definition}
	A nonempty subset $\mathcal{M}$ of $\Delta$ is called thin (resp. thick) if $\mathcal{P}\cap\mathcal{M}$ has cardinality $2$ (resp. $>2$) for every panel $\mathcal{P}$ of $\Delta$ with $\mathcal{P}\cap\mathcal{M}\neq\Phi$. A thin subbuilding of $\Delta$ is called an apartment of $\Delta$.
\end{definition}


\begin{proposition}
	For any two chambers $C,D\in \Delta$, there exists an apartment $\Sigma$ of $\Delta$ with $C,D\in \Sigma$.
\end{proposition}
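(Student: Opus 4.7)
My proof plan centers on constructing a Weyl-distance preserving embedding of the Coxeter group $W$, viewed as the ``standard thin object'' with Weyl distance $\delta_W(u,v) = u^{-1}v$, into $\Delta$ whose image contains both $C$ and $D$. The image will be the desired apartment.

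First I would reduce the problem to building a minimal gallery from $C$ to $D$. Setting $w = \delta(C,D)$ and fixing a reduced expression $w = s_1 s_2 \cdots s_n$, I would iterate \textbf{WD3} starting from $C$ to produce chambers $C = C_0, C_1, \ldots, C_n = D$ with $\delta(C_{i-1}, C_i) = s_i$. A short induction using the ``in addition'' clause of \textbf{WD2} shows $\delta(C_0, C_i) = s_1 \cdots s_i$, so this gallery is minimal and the partial map $\phi : s_1\cdots s_i \mapsto C_i$ already preserves Weyl distance on an initial segment of $W$.

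Next I would extend $\phi$ to a map $\phi : W \to \Delta$ by induction on word length. Given $u \in W$ with $\ell(u) > 0$, choose $s\in S$ with $\ell(us) < \ell(u)$; the element $us$ is by induction already assigned, and \textbf{WD3} applied to $\phi(us)$ and $\phi(w')$ for a well-chosen $w'$ produces a chamber to serve as $\phi(u)$. One then shows simultaneously by induction that for all $u,v$ already defined, $\delta(\phi(u), \phi(v)) = u^{-1} v$. With this isometry in hand, the image $\Sigma := \phi(W)$ is a subbuilding of $\Delta$; thinness holds because the chambers of $\Sigma$ lying in a given panel correspond to a coset $\{u, us\}$, a two-element set. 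Since $\phi(1) = C$ and $\phi(w) = D$, the apartment $\Sigma$ contains both prescribed chambers.

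The main obstacle is well-definedness of $\phi$ and verification that the Weyl-distance identity $\delta(\phi(u),\phi(v)) = u^{-1}v$ is preserved in the inductive extension: \textbf{WD3} guarantees existence but not uniqueness of the next chamber, and different reduced expressions for $u$ could in principle lead to different candidates for $\phi(u)$. The standard way around this is to invoke the exchange/deletion condition in $(W,S)$: any two reduced expressions for the same element are linked by a chain of braid moves, and one must check using \textbf{WD2} that each braid move is compatible with the construction, so that the chamber produced does not depend on the choice of reduced word. Once this coherence is in place the rest of the argument is formal, and the apartment $\Sigma = \phi(W)$ meets the requirements of the proposition.
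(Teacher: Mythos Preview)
The paper does not prove this proposition; it is stated without proof in the preliminaries on buildings as a standard fact taken from the reference \cite{AB}. There is therefore no proof in the paper for your proposal to be compared against.

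On the substance of your outline: the overall strategy of building a Weyl-distance preserving map $\phi:W\to\Delta$ with $\phi(1)=C$ and $\phi(w)=D$, and taking its image as the apartment, is the standard one and is correct in spirit. However, the inductive extension step as you describe it has a gap. You write that ``\textbf{WD3} applied to $\phi(us)$ and $\phi(w')$ for a well-chosen $w'$ produces a chamber to serve as $\phi(u)$,'' but you never specify $w'$, and \textbf{WD3} only guarantees a chamber at prescribed Weyl distance from a \emph{single} reference chamber; it does not force the new chamber to be at the correct Weyl distance from every previously constructed $\phi(v)$. The usual argument avoids this by instead defining $\phi(u)$ as the terminal chamber of the \emph{unique} minimal gallery from $C$ of type $(s_1,\ldots,s_k)$, where $s_1\cdots s_k$ is a reduced word for $u$; uniqueness of such galleries comes from the ``in addition'' clause of \textbf{WD2}, not from \textbf{WD3}. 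Independence of the choice of reduced word is then checked via Matsumoto's theorem on braid moves, as you mention in your final paragraph. So the ingredients you list are the right ones, but the role you assign to \textbf{WD3} in the extension should be played by \textbf{WD2}.
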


Davis proved in \cite{D} that with the corrected defined metric, all buildings are CAT(0). This is called the Davis realization. See also \cite{AB} for detailed construction.

\begin{theorem}\cite[Theorem 12.66]{AB}
	For any building $\Delta$, its Davis realization $X=Z(\Delta)$ is a complete CAT(0) space.
\end{theorem}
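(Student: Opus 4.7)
The plan is to establish completeness and the CAT(0) inequality separately, following the classical Davis--Moussong strategy of combining the Cartan--Hadamard theorem with Gromov's link condition. The overall architecture is to show that $Z(\Delta)$, with its piecewise Euclidean metric, is a complete, simply connected, locally CAT(0) geodesic space, from which the theorem follows at once.

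First, I would set up the geometry of $Z(\Delta)$ explicitly. The Davis chamber $K$ is the geometric realization of the poset of spherical cosets, i.e.\ cosets $wW_T$ with $T\subseteq S$ spherical (so that $W_T$ is finite). The realization $Z(\Delta)$ is obtained by gluing one copy of $K$ to each chamber of $\Delta$, identifying the ``mirror'' of type $s$ in two $s$-adjacent chambers as prescribed by the Weyl distance $\delta$. Equip $K$ with its natural piecewise Euclidean metric, with each cell modelled on a Euclidean polytope coming from the finite Coxeter group $W_T$. Because there are only finitely many $W$-conjugacy classes of spherical cosets, $Z(\Delta)$ has finitely many isometry types of cells, and a standard result of Bridson then yields completeness. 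Simple connectedness comes from gallery-connectedness of $\Delta$ together with contractibility of each apartment (the apartment is the Davis complex of $(W,S)$): any loop can be pushed into a finite union of apartments via the gallery structure and then collapsed using a spanning-tree argument on the corresponding chamber subsystem.

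With completeness and simple connectedness in hand, Cartan--Hadamard reduces the global CAT(0) property to the local CAT(0) property, which by Gromov's link condition amounts to showing that at every vertex $v$ of $Z(\Delta)$ the link $Lk(v, Z(\Delta))$ is CAT(1) in its piecewise spherical metric. Unpacking the Davis construction, this link is canonically identified with the Davis realization of the residue of $\Delta$ through the face corresponding to $v$, which is itself a spherical building of type $(W_T, T)$ for the spherical $T\subseteq S$ stabilizing $v$. Thus the entire problem reduces to the statement that every spherical building, in its piecewise spherical Davis metric, is CAT(1).

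This last reduction is the main obstacle and the technical heart of the argument. The standard route is an induction on the rank. The base case of rank two, a generalized $m$-gon, is essentially a bipartite graph, and the CAT(1) condition there is equivalent to the incidence graph having girth $\geq 2m$, which follows directly from axioms (WD1)--(WD3). For the inductive step, one again invokes Cartan--Hadamard on the sphere: CAT(1) of a simply connected piecewise spherical complex reduces to local CAT(1) at each vertex plus the absence of closed geodesic loops of length less than $2\pi$. The local condition transfers to CAT(1) of lower-rank spherical buildings via the inductive hypothesis. The absence of short loops is the delicate step; one proves it by retracting a closed gallery in the spherical building onto a fixed apartment, verifying that this retraction is $1$-Lipschitz, and then using that an apartment is isometric to a round sphere and hence has no closed geodesic of length less than $2\pi$. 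Making this retraction precise, and checking it is length-nonincreasing on each cell, is where essentially all of the real work sits.
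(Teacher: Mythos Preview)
The paper does not prove this statement at all; it is quoted as \cite[Theorem 12.66]{AB} and used as a black box in the preliminaries. There is therefore no ``paper's own proof'' to compare against, and your proposal is not so much a different route as the only route on offer.

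That said, your sketch is a faithful outline of the Davis--Moussong argument as presented in \cite{AB} and \cite{D}: reduce to local CAT(0) via Cartan--Hadamard, identify links with spherical buildings, and prove those are CAT(1) by induction on rank using apartment retractions to rule out short closed geodesics. One point to tighten: your completeness argument appeals to Bridson's ``finitely many shapes'' result, which needs the complex to be an $M_\kappa$-polyhedral complex with finitely many isometry types of cells; you should make explicit that the Davis chamber $K$ itself is a finite complex, so this hypothesis holds regardless of the (possibly infinite) number of chambers in $\Delta$. Also, your simple-connectedness sketch (``push loops into apartments and collapse'') is a bit hand-wavy; the standard argument goes through contractibility of $Z(\Delta)$, proved either via the existence of a combing coming from the building axioms or by showing $Z(\Delta)$ is the nerve of a cover by contractible pieces. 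But for the purposes of this paper none of this matters: the result is imported wholesale, and a citation is all that is required.
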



For simplicity, we will abuse the notation and let $\Delta$ and $\Sigma$  be the Davis realization of the building and the apartment respectively. Also, we will use $d$ as the metric in Davis realization. 

There is an important retraction from the building to the apartment.

\begin{proposition}
	Every apartment is a retract of $\Delta$.
\end{proposition}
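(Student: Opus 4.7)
The plan is to construct the retraction $\rho = \rho_{\Sigma, C}$ combinatorially using the Weyl distance function and then extend it to the Davis realization. Fix a chamber $C \in \Sigma$. By the earlier proposition, for every chamber $D \in \Delta$ there exists an apartment $\Sigma'$ with $C, D \in \Sigma'$. Since $\Sigma$ (and every apartment) is thin, it is isomorphic, as a chamber system, to the Coxeter complex of $(W,S)$, so the map $E \mapsto \delta(C, E)$ is a bijection from the chambers of $\Sigma$ onto $W$. Define $\rho(D)$ to be the unique chamber of $\Sigma$ with $\delta(C, \rho(D)) = \delta(C, D)$.

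The next step is to verify the basic properties of $\rho$ at the chamber level. Well-definedness is automatic because $\delta(C, D)$ is an intrinsic invariant of the ordered pair $(C, D)$ in $\Delta$ and does not depend on any chosen apartment. If $D \in \Sigma$, then both $D$ and $\rho(D)$ are chambers of $\Sigma$ with the same Weyl distance from $C$, so $\rho(D) = D$ by thinness; thus $\rho$ restricts to the identity on the chambers of $\Sigma$. Using axiom (WD2), adjacent chambers $D, D'$ in $\Delta$ with $\delta(D, D') = s$ satisfy $\delta(C, D') \in \{\delta(C, D),\, s \delta(C, D)\}$, hence $\rho(D)$ and $\rho(D')$ are either equal or $s$-adjacent in $\Sigma$. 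So $\rho$ is a type-preserving morphism of chamber systems that collapses galleries onto galleries.

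The third step is to promote $\rho$ to a continuous retraction of the Davis realization $X = Z(\Delta)$ onto $Z(\Sigma) \subseteq X$. The Davis realization is assembled by gluing Davis chambers (copies of the Davis cell $K$) indexed by chambers of $\Delta$ along their faces according to the panel equivalence relation. Because $\rho$ carries $s$-adjacent chambers to equal or $s$-adjacent chambers of $\Sigma$, it induces a consistent gluing map on the corresponding Davis chambers: one extends $\rho$ on each Davis chamber by the identity $K \to K$ composed with the inclusion into the image chamber, and the panel-compatibility just established guarantees that these local maps agree on shared faces. This yields a continuous map $\rho: X \to Z(\Sigma)$, which is the identity on $Z(\Sigma)$ because it is the identity on every Davis chamber of $\Sigma$ and on all their faces.

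The main obstacle is the third step, the passage from combinatorics to geometry: one must check carefully that the face-identifications defining the Davis realization are respected by the chamber-level map $\rho$, so that the extension is genuinely continuous and well-defined, and that the identification is compatible with the subcomplex $Z(\Sigma) \hookrightarrow Z(\Delta)$ so that $\rho$ restricts to the identity there. Once the combinatorial adjacency behavior of $\rho$ (coming from (WD2)) is in hand, this verification is routine but it is the place where the simplicial data and the CAT(0) geometry must be reconciled.
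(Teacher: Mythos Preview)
The paper does not supply its own proof of this proposition; it is stated as background from \cite{AB}, with the retraction $\rho_{\Sigma,C}$ described immediately afterward only by its characterizing property (the unique chamber map fixing $C$ pointwise and carrying every apartment through $C$ isomorphically onto $\Sigma$). Your argument is the standard $W$-metric construction of that same map and is correct.

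Two minor remarks. First, the sentence in which you invoke an auxiliary apartment $\Sigma'$ containing $C$ and $D$ is never actually used in your definition of $\rho(D)$, since you define $\rho$ purely via $\delta(C,D)$; you can drop it. Second, it is worth noting explicitly that your construction agrees with the paper's characterization: restricting your $\rho$ to any apartment $\Sigma'$ containing $C$ gives a type-preserving chamber map $\Sigma'\to\Sigma$ fixing $C$ which, by thinness, must be the unique $W$-isomorphism, so the two descriptions coincide. The extension to the Davis realization is indeed routine once you observe (as you do) that $\rho$ is a type-preserving morphism of chamber systems, since $Z(-)$ is functorial for such maps; continuity then follows because $Z(\Delta)$ carries the weak topology determined by its Davis chambers.
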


\begin{definition}
	Given an apartment $\Sigma$ and a chamber $C\in\Sigma$, there is a canonical retraction $\rho=\rho_{\Sigma,C}: \Delta \rightarrow \Sigma$. It is called the retraction onto $\Sigma$ centered at $C$. It can be characterized as the unique chamber map $\Delta\rightarrow \Sigma$ that fixes $C$ pointwise and maps every apartment containing $C$ isomorphically onto $\Sigma$.
\end{definition}

\begin{proposition}\label{prop1} Let $\rho=\rho_{\Sigma,C}$ be the apartment retraction. Then
	$$d(\rho(x),\rho(y))\le d(x,y)$$
	for all $x,y\in \Delta$, with equality if $x\in C$.
\end{proposition}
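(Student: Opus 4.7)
The plan is to treat the two assertions separately, in each case exploiting the defining property of $\rho=\rho_{\Sigma,C}$: its restriction to any apartment of $\Delta$ containing $C$ is an isomorphism onto $\Sigma$, and every such apartment isomorphism is an isometry of the corresponding Davis realizations (since $\Sigma'$ and $\Sigma$ are both Davis realizations of the Coxeter complex of $(W,S)$).

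For the equality statement, suppose $x\in C$ and let $y\in\Delta$. I would choose a closed chamber $D\subset\Delta$ with $y\in D$ and invoke the proposition guaranteeing an apartment $\Sigma'$ that contains both $C$ and $D$. Then $\rho|_{\Sigma'}\colon\Sigma'\to\Sigma$ is an isometric isomorphism that fixes $x\in C\subset\Sigma'$, so in particular $\rho(x)=x$. Since apartments are convex subspaces of the CAT(0) space $\Delta$ (a standard feature of the Davis realization), distances inside $\Sigma'$ and $\Sigma$ agree with distances in $\Delta$, so that
$$d(x,y)=d_{\Sigma'}(x,y)=d_\Sigma(\rho(x),\rho(y))=d(\rho(x),\rho(y)).$$

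For the general inequality, I would take the unique CAT(0) geodesic $\gamma\colon[0,L]\to\Delta$ from $x$ to $y$, where $L=d(x,y)$. Since every closed chamber in the Davis realization is an isometric copy of a fixed compact polytope, $\gamma$ meets only finitely many chambers, so I can partition $[0,L]$ into subintervals $[t_{i-1},t_i]$ with $\gamma([t_{i-1},t_i])\subset D_i$ for some chamber $D_i$. For each $i$ I would pick an apartment $\Sigma_i$ containing both $C$ and $D_i$; then $\rho|_{\Sigma_i}$ is an isometry, so $\rho$ preserves the length of each piece $\gamma|_{[t_{i-1},t_i]}$. Continuity of $\rho$ along shared panels ensures that $\rho\circ\gamma$ is a genuine path from $\rho(x)$ to $\rho(y)$, and its total length equals $\sum_i(t_i-t_{i-1})=L$. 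Hence $d(\rho(x),\rho(y))\le L=d(x,y)$.

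The main obstacle is the finite-partition step: I need to justify both that a finite-length geodesic in $\Delta$ crosses only finitely many chambers, and that $\rho$, though defined a priori chamber by chamber, glues continuously across shared panels. Both follow from the simplicial structure of the Davis realization — chambers are uniform compact polytopes and $\rho$ is a chamber map, hence simplicial and continuous — but these are the facts I would make explicit rather than leave tacit.
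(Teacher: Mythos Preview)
The paper does not actually supply a proof of this proposition: it is stated in the preliminaries on buildings as a standard fact (the surrounding material is drawn from \cite{AB}), and is then used later in Section~4. So there is no ``paper's own proof'' to compare against.

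Your proposal is the standard argument and is essentially correct. The equality case is exactly right: pick an apartment $\Sigma'$ containing $C$ and a chamber through $y$, use that $\rho|_{\Sigma'}$ is an isometric isomorphism fixing $C$ pointwise, and invoke convexity of apartments in the Davis realization so that $d_{\Sigma'}=d_\Delta$ on $\Sigma'$. For the inequality, subdividing the geodesic chamber-by-chamber and using that $\rho$ is an isometry on each chamber (hence length-preserving on each piece) gives a path in $\Sigma$ of the same total length, so $\rho$ is $1$-Lipschitz. The one point you flag --- that a finite geodesic meets only finitely many closed chambers --- is the genuine technical step; your justification via ``uniform compact polytopes'' is the right idea, and can be made precise by noting that the Davis realization is an $M_\kappa$--polyhedral complex with finitely many isometry types of cells, so Bridson--Haefliger I.7.28 (or the analogous statement) guarantees that any geodesic segment meets only finitely many open cells. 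Continuity of $\rho$ across panels is immediate since $\rho$ is a simplicial (type-preserving chamber) map on the underlying complex. With those two facts made explicit, the argument is complete.
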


\section{Capacity dimensions with the visual metric and the conical metric}
In this section, we will prove the following theorem.
\begin{theorem}
	Let X be a cobounded $\delta$-hyperbolic CAT(0) proper geodesic space. Let $cdim_v(\partial X)$ be the capacity dimension of the boundary of $X$ with the visual metric and $cdim_c(\partial X)$ be the capacity dimension with the conical metric. Then we have $cdim_v(\partial X) = cdim_c(\partial X)$. In particular, for a hyperbolic CAT(0) group G, we have $cdim_v(\partial G) = cdim_c(\partial G)$.
\end{theorem}

We will need the following lemmas.
\begin{lemma}\label{lem1}
	Let $\displaystyle f(x)=\frac1{-a\ln x+b}$, where $a,b>0$. If $\displaystyle0<x<e^{-2}$, then $f''(x)<0$. Consequently, when $0<x<e^{-2}$, we have 
	\begin{align*}
	&f(cx)\ge cf(x)\quad \text{and}\\
	&f(cx)+f\big((1-c)x\big)\ge f(x),\quad \text{when}\quad 0<c<1.
	\end{align*}
\end{lemma}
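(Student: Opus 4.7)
The plan is to verify concavity by direct computation and then derive the two inequalities as standard consequences of concavity combined with the boundary behaviour at $0$.

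First, I would compute the derivatives. Writing $g(x) = -a\ln x + b$ so that $f = 1/g$, one gets
\[
f'(x) = \frac{a}{x\,g(x)^{2}}, \qquad f''(x) = \frac{a\bigl(a\ln x + 2a - b\bigr)}{x^{2}\,g(x)^{3}}.
\]
Since $b>0$, the denominator $g(x)$ is positive on $(0,1)$, so the sign of $f''(x)$ matches that of $a\ln x + 2a - b$. The inequality $a\ln x + 2a - b < 0$ is equivalent to $x < e^{(b/a)-2}$, and because $b/a>0$ this is implied by $x<e^{-2}$. Hence $f''(x)<0$ on $(0,e^{-2})$, proving the first claim.

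Next, I would extend $f$ continuously to the closed interval $[0,e^{-2})$ by setting $f(0)=0$, which is consistent since $g(x)\to+\infty$ as $x\to 0^+$. The function $f$ is then continuous on $[0,e^{-2})$ and strictly concave on the interior, so the concavity inequality $f(\alpha u + (1-\alpha)v)\ge \alpha f(u)+(1-\alpha)f(v)$ holds for all $u,v\in[0,e^{-2})$ and $\alpha\in[0,1]$.

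Finally, for $0<x<e^{-2}$ and $0<c<1$, apply concavity with $u=x$, $v=0$, $\alpha=c$ to obtain
\[
f(cx) = f\bigl(c\cdot x + (1-c)\cdot 0\bigr) \ge c f(x) + (1-c) f(0) = c f(x),
\]
which is the first inequality. Similarly $f((1-c)x)\ge (1-c)f(x)$, and adding these two bounds yields $f(cx)+f((1-c)x)\ge f(x)$. The only real computation is $f''$; the rest is the standard fact that a concave function vanishing at $0$ is superadditive (or more precisely, super\-homogeneous on chords through the origin), so I do not expect any obstacle beyond bookkeeping.
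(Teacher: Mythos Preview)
Your proposal is correct and follows essentially the same approach as the paper: both compute $f''$ explicitly, observe that the sign of the numerator $a\ln x+2a-b$ is negative for $x<e^{-2}$, invoke the limit $f(x)\to 0$ as $x\to 0^{+}$ together with concavity (Jensen) to get $f(cx)\ge cf(x)$, and then add this to the analogous bound with $1-c$. Your presentation is slightly more careful in explicitly extending $f$ continuously to $0$, but the argument is the same.
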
	
\begin{proof}
	Note that 
	$$f''(x)=\frac{a}{(-a\ln x +b)^3x^2}\cdot (2a+a\ln x-b).$$
	When $0<x<e^{-2}$, we have $-a\ln x+b>0$ and $x^2>0$, hence
	$$(-a\ln x+b)^3x^2>0.$$
	In addition, $\ln x<-2$, hence
	$$2a+a\ln x-b<-b.$$
	Therefore,
	$$f''(x)<\frac{a}{(-a\ln x+b)^3x^2}\cdot(-b)<0.$$
	Notice that $\lim_{x\rightarrow0}f(x)=0$, hence the first inequality follows from the Jensen's inequality for concave function
	$$f(cx+(1-c)y)\ge cf(x)+(1-c)f(y)\quad \text{for}\quad t \in (0,1).$$
	Applying the first inequality twice with $c$ and $1-c$ gives the second inequality.
\end{proof}

\begin{remark}
	The upper bound $e^{-2}$ for the equalities to hold is not sharp. However, it's enough to prove the main theorem.
\end{remark}

The following lemma is well-known.

\begin{lemma}\label{claim}
	Let $X$ be a hyperbolic CAT(0) space. Then for any geodesic rays $a, b:[0,\infty)\rightarrow X$ starting at the basepoint, we have
	\begin{enumerate}[label=(\arabic*)]
		\item $d(a(t),b(t))$ is non-decreasing with respect to $t$.\label{claim1}
		\item $(a(t)|b(t))$ is non-decreasing with respect to $t$.\label{claim2}
	\end{enumerate}
\end{lemma}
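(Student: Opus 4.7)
The plan is to prove both parts by routine computation, leaning on the basepoint condition $a(0)=b(0)=x_0$ and on Lemma \ref{ratio}, which the paper has already stated. The hyperbolicity hypothesis is not actually needed; only the CAT(0) assumption is used.

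For part \ref{claim1}, I would simply apply Lemma \ref{ratio} with $\gamma=a$, $\gamma'=b$, and basepoint $p=x_0$. For any $0<s\le t$, Lemma \ref{ratio} gives
$$d(a(s),b(s))\le \tfrac{s}{t}\,d(a(t),b(t)).$$
Since $s/t\le 1$ and $d(a(t),b(t))\ge 0$, this yields $d(a(s),b(s))\le d(a(t),b(t))$, which is the monotonicity claim. (Boundary case $s=0$ is trivial since $d(a(0),b(0))=0$.)

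For part \ref{claim2}, I would first rewrite the Gromov product using that both rays are unit-speed and issue from the basepoint, so $|a(t)-x_0|=|b(t)-x_0|=t$. By the definition of the Gromov product (Definition \ref{gromovproduct}),
$$(a(t)\mid b(t))=t-\tfrac12\,d(a(t),b(t)).$$
Now, for $0\le s\le t$, the triangle inequality together with the fact that $a$ and $b$ are geodesics parametrized by arclength gives
$$d(a(t),b(t))\le d(a(t),a(s))+d(a(s),b(s))+d(b(s),b(t))=d(a(s),b(s))+2(t-s).$$
Rearranging and substituting into the formula above,
$$(a(t)\mid b(t))-(a(s)\mid b(s))=(t-s)-\tfrac12\bigl(d(a(t),b(t))-d(a(s),b(s))\bigr)\ge (t-s)-(t-s)=0,$$
which shows that $(a(t)\mid b(t))$ is non-decreasing in $t$.

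There is no real obstacle here: part \ref{claim1} is a direct corollary of Lemma \ref{ratio}, and part \ref{claim2} reduces to the fact that $t\mapsto d(a(t),b(t))$ is $2$-Lipschitz, which is immediate from the triangle inequality. The only point worth flagging is the cosmetic one that the lemma is stated for hyperbolic CAT(0) spaces but the proof uses only the CAT(0) hypothesis.
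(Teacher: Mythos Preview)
Your proof is correct. The paper itself does not supply a proof of this lemma; it simply records it as ``well-known'' and moves on. Your argument fills that gap cleanly: part~(1) is an immediate consequence of Lemma~\ref{ratio}, and part~(2) follows from the $2$-Lipschitz bound on $t\mapsto d(a(t),b(t))$ obtained via the triangle inequality. Your observation that hyperbolicity is unnecessary is also accurate---only the CAT(0) hypothesis (in fact only Lemma~\ref{ratio} and the triangle inequality) is used.
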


%
%
%
%

\begin{lemma}\label{sR}
	Let $X$ be a $\delta$-hyperbolic CAT(0) space. For any $s$ arbitrarily small, there exists $R>0$ such that for any $a,b\in\partial X$ with $a(t)$ and $b(t)$ being the representing geodesic rays starting at the basepoint, and any $R_1\ge R$, we have 
	\begin{equation}\label{1}
	(a|b)-2\delta-s<R_1-\frac12d\left(a(R_1),b(R_1)\right)<(a|b).
	\end{equation}
\end{lemma}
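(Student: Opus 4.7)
The key observation is that $R_1-\tfrac12 d\bigl(a(R_1),b(R_1)\bigr)$ is precisely the basepoint Gromov product $\bigl(a(R_1)|b(R_1)\bigr)_{x_0}$, because $a(R_1)$ and $b(R_1)$ both lie at distance exactly $R_1$ from $x_0$. So the lemma really asks that this interior Gromov product be sandwiched between $(a|b)-2\delta-s$ and $(a|b)$ once $R_1$ is large enough.

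By part (2) of Lemma \ref{claim}, the function $t\mapsto \bigl(a(t)|b(t)\bigr)$ is non-decreasing, so the limit
\[L\;:=\;\lim_{t\to\infty}\bigl(a(t)|b(t)\bigr)\in[0,\infty]\]
exists. The first main step is to pin $L$ within $2\delta$ of the boundary Gromov product $(a|b)$. I would apply Proposition \ref{liminf} to the specific sequences $x_i=a(i)$ and $y_j=b(j)$. For $i\le j$, since $b$ is a geodesic ray, the triangle inequality gives $d(a(i),b(j))\le d(a(i),b(i))+(j-i)$, which immediately yields $\bigl(a(i)|b(j)\bigr)\ge\bigl(a(i)|b(i)\bigr)$; by symmetry, $\bigl(a(i)|b(j)\bigr)\ge\bigl(a(\min(i,j))|b(\min(i,j))\bigr)$. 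Combining with the diagonal case $i=j$, one gets $\liminf_{i,j\to\infty}\bigl(a(i)|b(j)\bigr)=L$, so Proposition \ref{liminf} delivers
\[(a|b)-2\delta\;\le\;L\;\le\;(a|b).\]

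Given $s>0$, choose $R$ large enough that $L-\bigl(a(R)|b(R)\bigr)<s$. For any $R_1\ge R$, monotonicity now gives
\[\bigl(a(R_1)|b(R_1)\bigr)\;\ge\;\bigl(a(R)|b(R)\bigr)\;>\;L-s\;\ge\;(a|b)-2\delta-s,\]
which is the lower bound in \eqref{1}. The upper bound $\bigl(a(R_1)|b(R_1)\bigr)\le L\le (a|b)$ is immediate from monotonicity and the sandwich above.

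\textbf{Main obstacle.} The delicate point I foresee is the uniformity of $R$ over all pairs $(a,b)\in\partial X\times\partial X$: the statement asks for $R=R(s)$ alone, whereas the argument above only produces $R=R(a,b,s)$. The convergence rate of $\bigl(a(t)|b(t)\bigr)\to L$ a priori depends on the pair, since rays that are nearly parallel at $x_0$ (i.e., with $(a|b)$ large) do not separate visibly until $t$ is of order $(a|b)$. To obtain a single $R$ depending only on $s$, I expect one must exploit $\delta$-hyperbolicity to force the deficit $L-\bigl(a(t)|b(t)\bigr)$ to collapse at a rate governed by $\delta$ once the two rays have separated by more than a fixed multiple of $\delta$, together with the CAT(0) rescaling in Lemma \ref{ratio} to transfer control between different radii. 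Establishing this uniform estimate is, in my view, the real technical content of the lemma.
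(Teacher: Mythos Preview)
Your approach coincides with the paper's: both rewrite the middle expression as the Gromov product $(a(R_1)\,|\,b(R_1))$, invoke the monotonicity of Lemma~\ref{claim} to turn the $\liminf$ in Proposition~\ref{liminf} into an honest limit $L=\lim_t(a(t)\,|\,b(t))$, obtain $(a|b)-2\delta\le L\le(a|b)$, and then pick $R$ so that $(a(R_1)\,|\,b(R_1))$ lies within $s$ of $L$ for all $R_1\ge R$. Your treatment of the double-index $\liminf$ is in fact more careful than the paper's, which simply restricts to the diagonal sequence $t_i$.

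On the uniformity issue you flag, the paper disposes of it in a single clause: ``Since $\partial X$ is compact, and by Lemma~\ref{claim}\ref{claim1}, for any $s$ arbitrarily small, there exists $R>0$ such that for any $a,b\in\partial X$ \ldots''. No further mechanism is supplied. Your instinct that this is the real content is correct, and indeed the bare compactness appeal does not close the gap: since $(a(R)\,|\,b(R))\le R$ always, while $(a|b)-2\delta-s$ is unbounded as $(a,b)$ approaches the diagonal of $\partial X\times\partial X$, no single $R$ can validate the left inequality of~\eqref{1} for every pair. In the sequel (Proposition~\ref{lem2}) the inequality is only ever invoked at the particular $R_1$ satisfying $d(a(R_1),b(R_1))=A$, and there a bound on $L-(a(R_1)\,|\,b(R_1))$ depending only on $\delta$ and $A$ is available from hyperbolicity; that weaker, non-uniform version is what one should actually establish.
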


\begin{proof}
	Let $\{t_i\}$ be a sequence going to $\infty$. For any $a,b\in \partial X$, let $a(t)$ and $b(t)$ be the representing geodesic rays starting at the basepoint. By Lemma \ref{claim}\ref{claim2} we know
	$$\liminf_{i}(a(t_i)|b(t_i))=\lim_i(a(t_i)|b(t_i))=\lim_t(a(t)|b(t)).$$
	By Proposition \ref{liminf} and the equality above, we have 
	
	$$(a|b)-2\delta\le \lim_t(a(t)|b(t))\le(a|b).$$
	
	Since $\partial X$ is compact, and by Lemma \ref{claim}\ref{claim1}, for any $s$ arbitrarily small, there exists $R>0$ such that for any $a,b\in\partial X$ and any $R_1\ge R$, we have
	$$\lim_t(a(t)|b(t))-s<(a(R_1)|b(R_1))<\lim_t(a(t)|b(t)).$$
	
	Notice $(a(R_1)|b(R_1))=R_1-\frac12d\left(a(R_1),b(R_1)\right)$. Combining the above two inequalities gives
	\begin{equation*}
	(a|b)-2\delta-s<R_1-\frac12d\left(a(R_1),b(R_1)\right)<(a|b).
	\end{equation*}
\end{proof}

Recall that $A$ is a fixed number in the definition of the conical metric $d_c$. Also recall that in the definition of $d_v$, we fix $\epsilon$ and $\epsilon'=e^{\epsilon\delta}-1$ such that $\epsilon'\le \sqrt2-1$. Then by Proposition \ref{visualmetric} we know
$$(1-2\epsilon')\rho_\epsilon(x,y)\le d_\epsilon(x,y)\le\rho_\epsilon(x,y),$$
where $d_\epsilon$ is the visual metric $d_v$ with parameter $\epsilon$ and $\rho_\epsilon(x,y)=e^{-\epsilon(x|y)}$.

\begin{proposition}\label{lem2}
	Let $X$ be a $\delta$-hyperbolic CAT(0) space. Then there exist constants $a, b, k, B>0$, depending on $ A$, $\epsilon$ and $\delta$, such that for any $x, y\in\partial X$ with $d_v(x,y)\le B$, we have
	$$\frac1{-a\ln d_v(x,y)+b}<d_c(x,y)<\frac1{-a\ln d_v(x,y)+b-k}.$$

	
\end{proposition}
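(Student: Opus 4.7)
The plan is to pass through the Gromov product: the visual metric determines $(x\mid y)$ up to a multiplicative constant in exponential scale, while Moran's metric via Lemma~\ref{sR} determines $(x\mid y)$ up to an additive constant. Composing the two estimates gives a relation in which $d_M(x,y)$ is a reciprocal-of-logarithm function of $d_v(x,y)$, exactly as claimed.

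First, I would rewrite Proposition~\ref{visualmetric} in additive form. Taking logarithms in $(1-2\epsilon')e^{-\epsilon(x\mid y)}\le d_v(x,y)\le e^{-\epsilon(x\mid y)}$ and solving for $(x\mid y)$ gives the sandwich
\begin{equation*}
-\tfrac{1}{\epsilon}\ln d_v(x,y)+\tfrac{1}{\epsilon}\ln(1-2\epsilon')\;\le\;(x\mid y)\;\le\;-\tfrac{1}{\epsilon}\ln d_v(x,y). \tag{$\ast$}
\end{equation*}

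Next, set $t:=1/d_M(x,y)$; by Definition~\ref{Moranmetric} this is the (unique, by Lemma~\ref{claim}\ref{claim1}) $t$ with $d(x(t),y(t))=A$, hence $t-\tfrac12 d(x(t),y(t))=t-A/2$. Fix any small $s>0$ and let $R$ be the constant supplied by Lemma~\ref{sR}. Assuming $t\ge R$, applying Lemma~\ref{sR} with $R_1=t$ gives
$$
(x\mid y)+A/2-2\delta-s\;<\;t\;<\;(x\mid y)+A/2,
$$
and passing to reciprocals,
$$
\frac{1}{(x\mid y)+A/2}\;<\;d_M(x,y)\;<\;\frac{1}{(x\mid y)+A/2-2\delta-s}.
$$
Inserting the two bounds of ($\ast$) on the appropriate sides now yields
$$
\frac{1}{-\tfrac{1}{\epsilon}\ln d_v(x,y)+A/2}\;<\;d_M(x,y)\;<\;\frac{1}{-\tfrac{1}{\epsilon}\ln d_v(x,y)+A/2-2\delta-s+\tfrac{1}{\epsilon}\ln(1-2\epsilon')},
$$
so the conclusion of the Proposition holds with $a=1/\epsilon$, $b=A/2$, and $k=2\delta+s+\tfrac{1}{\epsilon}\ln\tfrac{1}{1-2\epsilon'}$, which is positive since $0<1-2\epsilon'<1$.

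The remaining point is the choice of $B$, which handles a small circularity: Lemma~\ref{sR} requires $t\ge R$, but $t$ is governed by $d_M$, not $d_v$. This is resolved by the lower half of ($\ast$): if $d_v(x,y)\le B$ then $(x\mid y)\ge -\tfrac{1}{\epsilon}\ln B+\tfrac{1}{\epsilon}\ln(1-2\epsilon')$, and then $t>(x\mid y)+A/2-2\delta-s$ is automatically at least $R$ once $B$ is chosen small enough; simultaneously one picks $B$ so that the right-hand denominator above stays positive. Concretely, taking $B$ so small that $-\tfrac{1}{\epsilon}\ln B+\tfrac{1}{\epsilon}\ln(1-2\epsilon')+A/2-2\delta-s\ge R$ suffices for both purposes.

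There is no real obstacle: the two approximations we need are exactly the content of Proposition~\ref{visualmetric} and Lemma~\ref{sR}, and the main thing to be careful about is the bookkeeping of additive error terms (ensuring $k>0$ and that $B$ is small enough for Lemma~\ref{sR} to apply and for the right-hand denominator to be positive).
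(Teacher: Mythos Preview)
Your approach is exactly the paper's: sandwich $(x\mid y)$ from $d_v$ via Proposition~\ref{visualmetric}, sandwich $t=1/d_M$ around $(x\mid y)+A/2$ via Lemma~\ref{sR}, and compose. The constants you obtain, $a=1/\epsilon$, $b=A/2$, $k=2\delta+s-\tfrac1\epsilon\ln(1-2\epsilon')$, match the paper (which simply sets $s=1$).

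There is, however, a small but genuine circularity in your justification of $t\ge R$. You write that once $(x\mid y)$ is large, ``$t>(x\mid y)+A/2-2\delta-s$ is automatically at least $R$''. But the inequality $t>(x\mid y)+A/2-2\delta-s$ is itself the conclusion of Lemma~\ref{sR} applied with $R_1=t$, and that application already presupposes $t\ge R$. So you are invoking the lemma's output to check its hypothesis. The paper avoids this by first applying Lemma~\ref{sR} at the \emph{fixed} radius $R_1=R$: from $(x\mid y)-2\delta-s<R-\tfrac12 d(x(R),y(R))$ and the lower bound on $(x\mid y)$ coming from $d_v(x,y)\le B$, one deduces $d(x(R),y(R))<A$; then Lemma~\ref{claim}\ref{claim1} (monotonicity of $d(x(t),y(t))$) gives $t>R$. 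With this one-line fix your argument is complete and coincides with the paper's proof.
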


\begin{remark}\label{abk}
	In the proposition above, $a=\frac{1}{\epsilon}$, $b=\frac A2$, $k=2\delta+1-\frac1\epsilon\ln(1-2\epsilon')$ and 	$B=(1-2\epsilon')e^{-\epsilon(R-\frac12A+2\delta+1)}$, where $R$ is as in Lemma \ref{sR}.
\end{remark}

\begin{proof}

	Fix $A$ for the conical metric $d_c$ and fix $\epsilon$ in the visual metric $d_v=d_\epsilon$.
	For any small $s$, say $s=1$ for simplicity, let $R$ be as in Lemma \ref{sR} and let $$B=(1-2\epsilon')e^{-\epsilon(R-\frac12A+2\delta+s)}.$$
	Now for any $x,y\in \partial X$ with $d_v(x,y)\le B$, we have $$d_\epsilon(x,y)<(1-2\epsilon')e^{-\epsilon(R-\frac12A+2\delta+s)}.$$ Therefore
	\begin{align*}
	\rho_\epsilon(x,y)&<e^{-\epsilon(R-\frac12A+2\delta+s)},\\
	(x|y)&>R-\frac12A+2\delta+s,\\
	R-\frac12d(x(R),y(R))&>R-\frac12A+2\delta+s-2\delta-s,\\
	d(x(R),y(R))&<A.
	\end{align*}
	
	By Lemma \ref{claim}\ref{claim1}, there exists $R_1>R$ satisfying $d(x(R_1),y(R_1))=A$. Hence $d_c(x,y)=\frac1{R_1}$. By \eqref{1} we have
	\begin{align}
	(x|y)-2\delta-s&<R_1-\frac12A<(x|y)\nonumber,\\
	\frac1{(x|y)+\frac A2}&<\frac1{R_1}<\frac1{(x|y)+\frac A2-2\delta-s}.\label{2}
	\end{align}
	We let $d=d_v(x,y)$ for simplicity. Note that $(1-2\epsilon')e^{-\epsilon(x|y)}\le d\le e^{-\epsilon(x|y)}$, hence we have
	\begin{equation}\label{3}
	\frac{\ln d}{-\epsilon}+\frac{\ln(1-2\epsilon')}\epsilon\le (x|y)\le \frac{\ln d}{-\epsilon}.
	\end{equation}
	Combine \eqref{2} and \eqref{3}, we get 
	$$\frac1{\frac{\ln d}{-\epsilon}+\frac A2}<d_c(x,y)<\frac1{\frac{\ln d}{-\epsilon}+\frac A2-2\delta-s+\frac{\ln(1-2\epsilon')}{\epsilon}}.$$
	
	Recall that we let $s=1$ for simplicity, hence we finish the proof by letting $a=\frac{1}{\epsilon}$, $b=\frac A2$ and $k=2\delta+1-\frac1\epsilon\ln(1-2\epsilon')$.
\end{proof}

\begin{corollary}\label{cdim}
	Let X be a $\delta$-hyperbolic CAT(0) space, 
	then we have $$cdim_c(\partial X) \le cdim_v(\partial X).$$
\end{corollary}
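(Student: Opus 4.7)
The plan is to push a $d_v$-cover witnessing $cdim_v(\partial X)\le n$ to a cover of $(\partial X, d_M)$ at a comparable scale, using the two-sided comparison of metrics established in Proposition \ref{lem2}. Suppose $cdim_v(\partial X)\le n$, so there exist $c_0\in(0,1]$ and $\lambda_0>0$ such that for every $\lambda\le\lambda_0$ there is a cover $\mathcal{U}_\lambda$ of $\partial X$ of order $\le n+1$ with $d_v$-mesh $\le\lambda$ and $d_v$-Lebesgue number $\ge c_0\lambda$. I would show that the same family $\mathcal{U}_\lambda$, regarded as a cover of $(\partial X, d_M)$, witnesses $cdim_M(\partial X)\le n$ with a slightly smaller capacity constant.

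First I would transport the metric estimates. Write $\phi(t)=\frac{1}{-a\ln t+b}$ and $\psi(t)=\frac{1}{-a\ln t+b-k}$ for the two bounding functions from Proposition \ref{lem2}; both are monotone increasing on $(0,B)$. After shrinking $\lambda_0$ below $B$ (and below $e^{-2}$), for any $\lambda\le\lambda_0$ and any $x,y$ in a common element of $\mathcal{U}_\lambda$ we have $d_v(x,y)\le\lambda$, hence $d_M(x,y)\le\psi(\lambda)$ by monotonicity and the upper bound in Proposition \ref{lem2}; and for each $x\in\partial X$, choosing $U\in\mathcal{U}_\lambda$ with $B_v(x,c_0\lambda)\subset U$, any $y\notin U$ satisfies $d_v(x,y)\ge c_0\lambda$, so the lower bound in Proposition \ref{lem2} gives $d_M(x,y)\ge \phi(c_0\lambda)$. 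Therefore $\mathcal{U}_\lambda$ has $d_M$-mesh $\le\psi(\lambda)$ and $d_M$-Lebesgue number $\ge\phi(c_0\lambda)$.

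Next I would estimate the ratio $\phi(c_0\lambda)/\psi(\lambda)$ from below by a positive constant independent of $\lambda$. Lemma \ref{lem1} applied to $f=\phi$ yields $\phi(c_0\lambda)\ge c_0\phi(\lambda)$ for $\lambda\le e^{-2}$. A direct computation shows
\[
\frac{\phi(\lambda)}{\psi(\lambda)}=\frac{-a\ln\lambda+b-k}{-a\ln\lambda+b}\longrightarrow 1\quad\text{as }\lambda\to 0^+,
\]
so after shrinking $\lambda_0$ again we may assume $\phi(\lambda)\ge\tfrac{1}{2}\psi(\lambda)$ for all $\lambda\le\lambda_0$. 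Combining these two bounds, the $d_M$-capacity of $\mathcal{U}_\lambda$ is at least $c_0/2$, uniformly in $\lambda$. Finally, because $\psi$ is continuous with $\psi(\lambda)\to 0$ as $\lambda\to 0^+$, the assignment $\lambda\mapsto\psi(\lambda)$ sweeps through a full neighborhood of $0$, so setting $\lambda_M=\psi(\lambda)$ produces covers of every sufficiently small $d_M$-scale with order $\le n+1$ and capacity $\ge c_0/2$.

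The only delicate point is coordinating the two ``losses'' (from $\phi(c_0\lambda)$ versus $c_0\phi(\lambda)$ and from $\phi(\lambda)$ versus $\psi(\lambda)$) into a single uniform constant; but each factor tends to a strictly positive limit as $\lambda\to 0$, so this is a routine matter of taking $\lambda_0$ small enough, not a genuine obstacle. The conceptual core of the argument is really the concavity of $\phi$ near $0$, supplied by Lemma \ref{lem1}, which prevents the $d_M$-Lebesgue number from collapsing relative to the $d_M$-mesh under the logarithmic rescaling $d_v\mapsto d_M$.
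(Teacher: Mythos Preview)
Your proof is correct and follows essentially the same route as the paper: transport a $d_v$-cover via Proposition~\ref{lem2}, use the concavity inequality $\phi(c_0\lambda)\ge c_0\,\phi(\lambda)$ from Lemma~\ref{lem1}, and bound the ratio $\phi(\lambda)/\psi(\lambda)$ from below uniformly. The only cosmetic difference is that the paper bounds this ratio explicitly by $\dfrac{1}{1+k}$ via the identity $\dfrac{\psi}{\phi}-1=k\psi$, whereas you observe $\phi/\psi\to 1$ and shrink $\lambda_0$; and you add the (welcome) remark that $\psi(\lambda)\to 0$ so all small $d_M$-scales are attained.
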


\begin{proof}
	Let $\mathcal{U}$ be any cover of $\partial X$ realizing $cdim_v(\partial X)$. Without loss of generality, we may assume  
	\begin{align*}
	&mesh(\mathcal{U})=\lambda\le \min\{\frac1{e^2}, B \},\quad \text{and}\\
	&\mathcal{L}(\mathcal{U})\ge  c\lambda
	\end{align*}
	where $B$ is as in Proposition \ref{lem2} and $0<c\le 1$ is fixed.
	
	Let $a,b,k>0$ be as in Proposition \ref{lem2}, and let
	\begin{align*}
	&f_1(x)=\frac1{-a\ln x+b},\\
	&f_2(x)=\frac1{-a\ln x+b-k}.
	\end{align*}
	Fix $A$ in the definition of the conical metric. It's easy to check that 
	

	\begin{align*}
	\frac1{f_1}-\frac1{f_2}=k,\quad\quad
	\frac{f_2-f_1}{f_1f_2}=k
	\end{align*}
	Hence $$\frac{f_2}{f_1}-1=kf_2.$$
	When $x$ is small, $f_2(x)<1$. Hence
	$$\frac{f_2(x)}{f_1(x)}=1+kf_2(x)<1+k.$$
	
	By Proposition \ref{lem2} and Lemma \ref{lem1}, we know 
	\begin{align*}
	&mesh_M(\mathcal{U})< f_2(\lambda),\quad\text{and}\\
	&\mathcal{L}_M(\mathcal{U}) > f_1(c\lambda)\ge cf_1(\lambda)
	\end{align*}
	Therefore,
	$$Capacity_M(\mathcal{U})=\frac{\mathcal{L}_M(\mathcal{U})}{mesh_M(\mathcal{U})}>\frac{f_1(c\lambda)}{f_2(\lambda)}\ge \frac{cf_1(\lambda)}{f_2(\lambda)}>\frac c{1+k}.$$
	This means $\mathcal{U}$ also satisfies the condition of capacity dimension with the conical metric $d_c$, therefore $cdim_c(\partial X)\le cdim_v(\partial X)$.
	
\end{proof}

\begin{theorem}
	Let X be a cobounded $\delta$-hyperbolic CAT(0) proper geodesic space. Let $cdim_v(\partial X)$ be the capacity dimension of the boundary of $X$ with the visual metric and $cdim_c(\partial X)$ be the capacity dimension with the conical metric. Then we have $cdim_v(\partial X) = cdim_c(\partial X)$. In particular, for a hyperbolic CAT(0) group G, we have $cdim_v(\partial G) = cdim_c(\partial G)$.
\end{theorem}

\begin{proof}
	By Corollary \ref{cdim}, we know
	$$cdim_c(\partial X)\le cdim_v(\partial X).$$ By Theorem \ref{topequalcap} we know that 
	$$cdim_v(\partial X)= dim(\partial X).$$
	Since $\partial X$ is a compact metric space, by definition \ref{topologicaldimension}, any cover of $\partial X$ realizing the capacity dimension also satifies the condition for topological dimension, therefore we have
	$$dim(\partial X)\le cdim_c(\partial X).$$
	Combining all the (in)equalities above gives
	$$dim(\partial X)\le cdim_c(\partial X) \le cdim_v(\partial X) = dim(\partial X).$$
	This proves $cdim_c(\partial X) = cdim_v(\partial X)$.
	
\end{proof}

\section{Capacity dimension of the boundary of buildings }

We will prove the following theorem.
\begin{theorem}\label{thm}
	The capacity dimension of the boundary of any nonspherical building is equal to the capacity dimension of the boundary of an apartment in the building.
\end{theorem}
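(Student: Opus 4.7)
The plan is to prove the two inequalities $cdim \partial\Sigma \le cdim \partial\Delta$ and $cdim \partial\Delta \le cdim \partial\Sigma$ separately. The first is the subspace direction and should follow from general principles: since $\Sigma$ is a convex, isometrically embedded CAT(0) subspace of $\Delta$ containing the basepoint $x_0$, every geodesic ray in $\Sigma$ from $x_0$ is also a geodesic ray in $\Delta$, so the Moran metric $d_{A,x_0}$ on $\partial\Sigma$ is the restriction of the Moran metric on $\partial\Delta$. Capacity dimension is hereditary under metric subspaces: given a cover $\mathcal{U}$ of $\partial\Delta$ with order $\le n+1$, mesh $\le \lambda$, and Lebesgue number $\ge c\lambda$, the traces $\{U \cap \partial\Sigma : U \in \mathcal{U}\}$ form a cover of $\partial\Sigma$ with the same order, no larger mesh, and Lebesgue number at least $c\lambda$ (since $B_{\partial\Sigma}(\xi, c\lambda) = B_{\partial\Delta}(\xi, c\lambda) \cap \partial\Sigma$ lies inside some trace whenever the ambient ball lies inside some $U$). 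This gives $cdim \partial\Sigma \le cdim \partial\Delta$.

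For the harder inequality, the plan is to lift a near-optimal cover of $\partial\Sigma$ via the canonical retraction $\rho = \rho_{\Sigma, C} : \Delta \to \Sigma$, where $C$ is a base chamber containing $x_0$. By Proposition \ref{prop1} the retraction is 1-Lipschitz, and by definition it restricts to an isometry on every apartment $\Sigma' \ni C$. Since any geodesic ray from $x_0$ lies inside some such $\Sigma'$, $\rho$ descends to a boundary map $\partial\rho: \partial\Delta \to \partial\Sigma$ that is surjective, 1-Lipschitz with respect to Moran's metric (if $d(\gamma_\xi(T), \gamma_\eta(T)) = A$ then by 1-Lipschitzness $d(\rho\circ\gamma_\xi(T), \rho\circ\gamma_\eta(T)) \le A$, so the time at which the images reach distance $A$ is at least $T$, forcing $d_M(\partial\rho(\xi), \partial\rho(\eta)) \le d_M(\xi, \eta)$), and restricts to an isometry $\partial\Sigma' \to \partial\Sigma$ for each $\Sigma' \ni C$. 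I would then use the $L$-disjoint formulation of capacity dimension: given $n+1$ families $\mathcal{F}_0, \ldots, \mathcal{F}_n$ of $L$-disjoint, $cL$-bounded sets covering $\partial\Sigma$, the preimage families $\widetilde{\mathcal{F}}_i = \{(\partial\rho)^{-1}(V) : V \in \mathcal{F}_i\}$ cover $\partial\Delta$ by surjectivity, and each $\widetilde{\mathcal{F}}_i$ remains $L$-disjoint because $\partial\rho$ is 1-Lipschitz (if $d_M(\xi, \eta) < L$ then $d_M(\partial\rho(\xi), \partial\rho(\eta)) < L$, so they cannot lie in preimages of distinct $V, V'$ with $d_M(V, V') \ge L$).

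The main obstacle, and where I expect the bulk of the work to lie, is the diameter control: fibers of $\partial\rho$ have positive Moran diameter in general (already in a regular tree, two boundary points folded onto the same end can be at Moran distance of order $1/A$), so the naive preimages $(\partial\rho)^{-1}(V)$ are not $cL$-bounded. The plan to overcome this is to refine each preimage by decomposing it according to the apartment through $C$ containing the defining ray: each piece $\partial\Sigma' \cap (\partial\rho)^{-1}(V)$ is an isometric image of $V$ under $(\partial\rho|_{\partial\Sigma'})^{-1}$, hence has Moran diameter $\le cL$, and then to regroup these pieces across apartments into $n+1$ new families while preserving both the $L$-disjointness and the $cL$-boundedness up to multiplicative constants. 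The combinatorial input should be that any two rays from $x_0$ at Moran distance less than $L$ have initial segments up to time $T = 1/L$ lying in a common apartment through $C$ (by the two-point property of buildings), so the regrouping can be made consistent at scale $L$. This turns the problem into a combinatorial statement about the pattern of apartments through $C$ at a given scale, which should give the desired $n+1$ families with controlled constants.
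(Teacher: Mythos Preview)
Your setup for the hard inequality is essentially the paper's: use the retraction $\rho=\rho_{\Sigma,C}$, observe it is $1$-Lipschitz and carries rays to rays, and pull a cover of $\partial\Sigma$ back to $\partial\Delta$. You also correctly isolate the sole obstruction, namely that fibres of $\partial\rho$ have positive Moran diameter so the naive preimages are unbounded.

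The gap is in your proposed fix. Decomposing $(\partial\rho)^{-1}(V)$ into the pieces $\partial\Sigma'\cap(\partial\rho)^{-1}(V)$ over all apartments $\Sigma'\ni C$ gives overlapping pieces indexed by an enormous family, and the ``regrouping into $n+1$ families'' is the entire content of the theorem, not an afterthought. Your intended combinatorial input --- that two rays with $d_M(\xi,\eta)<L$ have their initial segments up to time $1/L$ in a common apartment through $C$ --- is not a building axiom: two chambers lie in a common apartment, but there is no reason that apartment also contains $C$, and an apartment containing $C$ and $\gamma_\xi(1/L)$ need not contain $\gamma_\eta(1/L)$. Without this, the regrouping remains a hope rather than an argument.

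The paper resolves the diameter problem by a different mechanism. It passes from $\partial\Sigma$ to two spheres $S_{1/L}(p)$ and $S_{1/(cL)}(p)$ in $\Sigma$, where the cover becomes $A$-separated (resp.\ $A$-bounded). On $S_{1/L}$ it pulls back not $V$ itself but its $A/2$-neighbourhood $N_{A/2}(V)$, and declares two preimage points equivalent if they lie in the same \emph{connected component} of $\rho^{-1}(N_{A/2}(V))$. This is the concrete ``regrouping'' you are looking for: separation survives because distinct components are at least $A/2$ apart (any path between them must exit $\rho^{-1}(N_{A/2}(V))$), while the diameter bound on each component comes from an external lemma of Dranishnikov--Schroeder (Lemma~\ref{lem}), which says that a connected component of $\rho^{-1}(U)$ has diameter $\le S\cdot\mathrm{diam}(U)+M$ with $S,M$ depending only on the chamber geometry. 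Reading these bounds back onto $\partial\Delta$ via Lemma~\ref{ratio} finishes the proof. The passage through spheres and connected components, together with the Dranishnikov--Schroeder diameter lemma, is the substantive idea your proposal is missing.
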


We will need the following lemma. Recall that $\rho:\Delta\rightarrow\Sigma$ is the apartment retraction.

\begin{lemma}(Lemma 1 in \cite{DS})
	For any $N>0$ there exists $M>0$ such that if $U$ is a subset of $W$ of $\delta$-diameter $\le N$, then any gallery-connected component $V$ of $\rho^{-1}(U)$ has $\delta$-diameter $\le M$.
\end{lemma}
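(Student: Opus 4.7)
The plan is to treat the statement as a purely combinatorial claim about chambers. Since the retraction satisfies $\delta(C,\rho(D))=\delta(C,D)$, the preimage equals $\rho^{-1}(U)=\{D\in\Delta:\delta(C,D)\in U\}$, and the $\delta$-diameter of a set of chambers coincides with the maximum of $\ell(\delta(D,D'))$ over pairs. So the task reduces to bounding $\ell(\delta(D,D'))$ uniformly for $D,D'$ in a common gallery-connected component $V$ of $\rho^{-1}(U)$ by a function of $N$ alone, crucially independent of how far $U$ sits from the identity in $W$ (which is the delicate point, since the individual lengths $\ell(u)$ for $u\in U$ are unbounded in $N$).

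I plan a two-part argument. The first, a lifting step: given $D_0,D\in V$, axioms (WD2)--(WD3) let me inductively lift any gallery in $\Sigma$ starting at $w_0:=\rho(D_0)$ to a gallery in $\Delta$ starting at $D_0$ that projects onto it chamber-by-chamber, because at each step the appropriate $s$-panel of the current lifted chamber contains, by (WD3), a chamber at the required Weyl-distance from $C$. After replacing $U$ by its combinatorial convex hull $\widehat U$ in the Coxeter complex $\Sigma$ (which has $\delta$-diameter bounded in terms of $N$ by standard Coxeter combinatorics, via control by the roots separating elements of $U$), any two elements of $\widehat U$ can be joined by a minimal gallery that stays inside $\widehat U$. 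Lifting such a minimal gallery from $\rho(D_0)$ to $\rho(D)$ produces a chamber $D''\in\rho^{-1}(\rho(D))$ that is gallery-connected to $D_0$ through $\rho^{-1}(\widehat U)$ and satisfies $\ell(\delta(D_0,D''))\le\operatorname{diam}(\widehat U)$, a function of $N$ alone.

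The second, a within-fibre step: one must bound $\ell(\delta(D'',D))$ for two chambers lying in the same fibre $\rho^{-1}(w)$ and in the same gallery-connected component of $\rho^{-1}(\widehat U)$. Any gallery joining them projects via $\rho$ to a stammering closed loop at $w$ in $\Sigma$, and each stammering $s$-crossing at a chamber of $\delta(C,\cdot)=u$ forces $\ell(us)<\ell(u)$ by axiom (WD2). This is the substantive obstacle of the lemma: the gallery-connectedness hypothesis sits inside $\rho^{-1}(\widehat U)$ rather than inside a single fibre, yet it must be converted into a quantitative bound on within-fibre Weyl-distance despite individual fibres being potentially unbounded. My intended route is induction on $N$. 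The base case $N=0$ forces $U=\{w\}$, and since $\rho^{-1}(w)$ meets each panel in either a full ``stammering'' panel or in a single chamber, gallery-connected components have $\delta$-diameter at most $1$. For the inductive step I would identify the within-fibre gallery-connected component of $V$ with a gallery-connected component of $\rho^{-1}(U')$ for a set $U'\subsetneq\widehat U$ of strictly smaller $\delta$-diameter (obtained by peeling off a single extremal element of $\widehat U$ reached from $w$), so that the inductive hypothesis supplies the required fibre bound. Combining the two steps by the triangle inequality $\ell(\delta(D_0,D))\le\ell(\delta(D_0,D''))+\ell(\delta(D'',D))$ then yields the desired uniform $M=M(N)$.
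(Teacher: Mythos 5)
Your strategy is genuinely different from the one this lemma actually rests on, and as written it does not close. The paper takes the statement as a black box from \cite{DS}; the mechanism behind it (reproduced in the paper's proof of Lemma \ref{lem}) is a wall-separation argument: Lemma 2 of \cite{DS} produces a minimal gallery $\gamma$ from $C$ whose successive walls separate it from $U$, and Lemma 3 of \cite{DS} produces, for each gallery-connected component $V$ of $\rho^{-1}(U)$, a single gate chamber $E$ through which every minimal gallery from $C$ into $V$ passes; since $E$ lies within bounded distance of $V$, the component has bounded diameter. Your first step (lifting a minimal gallery via (WD3) into the fibre of $\rho(D)$ after passing to the convex hull $\widehat U$) is sound, but it only relocates the entire difficulty into your within-fibre step, and that is where the argument fails.

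Concretely, your base case is false. A panel $P$ of a chamber in $\rho^{-1}(w)$ meets the fibre either in exactly one chamber or in \emph{all chambers except the gate} $\mathrm{proj}_P(C)$ --- not ``a full panel or a single chamber'' --- and the gallery-connected component of $D$ inside $\rho^{-1}(w)$ is the set of chambers opposite the gate in the $J$-residue of $D$, where $J$ is the descent set of $w$. In a thick building this set is typically gallery-connected with $\delta$-diameter $\ell\bigl(w_0(W_J)\bigr)$, not $1$: already in a thick generalized digon ($m(s,t)=2$), for $w$ with descents at both $s$ and $t$, an $s$-move followed by a $t$-move stays in the fibre and ends at Weyl distance $st$ from the start. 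The correct uniform base-case bound is $\max_J \ell\bigl(w_0(W_J)\bigr)$ over spherical $J$, which depends on $(W,S)$ and not only on $N$, so the reasoning you gave for it cannot be repaired cosmetically. More seriously, the inductive step is a placeholder rather than an argument: you must bound the Weyl distance between two chambers of the same fibre joined by a gallery that wanders through all of $\rho^{-1}(\widehat U)$, and ``peeling off an extremal element of $\widehat U$'' neither obviously preserves convexity nor decreases diameter, nor does it identify the relevant component with a component of some $\rho^{-1}(U')$; moreover $\mathrm{diam}(\widehat U)$ can exceed $N$, so it is not even clear your induction runs on a decreasing quantity. This within-fibre control is precisely the content of the lemma, and the separating-wall/gate argument of \cite{DS} is what supplies it; I would recommend either adopting that route or supplying a complete replacement for your step two.
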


To use the lemma with our settings, we will modify the proof a little to get the following lemma. The sketch of the proof will be given after the proof of the main theorem.
\begin{lemma}\label{lem}
	Let $\Delta$ be the Davis realization of a building and $\Sigma$ be the Davis complex of $W$. Let $\rho:\Delta\rightarrow\Sigma$ be the retraction with base chamber $C$. For any $U\subset \Sigma$ with $diam(U)\le R$, any path-connected component of $\rho^{-1}(U)$ has diameter at most $SR+M$, where $S$ and $M$ are constants depending only on the diameter of a chamber.
\end{lemma}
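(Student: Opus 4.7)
The plan is to reduce Lemma \ref{lem} to the preceding result of Dranishnikov--Schroeder by translating the metric statement into a combinatorial statement about galleries and Weyl distances. The key point is that in the Davis realization every chamber has the same diameter $d_0$, and the metric distance between two points and the Weyl distance between the chambers containing them are comparable up to affine constants depending only on $d_0$.

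Concretely, I would proceed in four steps. First, given $U\subset\Sigma$ with $\mathrm{diam}(U)\le R$, let $U_{\mathrm{ch}}$ denote the set of chambers of $\Sigma$ meeting $U$; a geodesic in $\Sigma$ connecting two points of $U$ crosses panels at a controlled rate, so $U_{\mathrm{ch}}$ has Weyl diameter at most $N = S_1 R + M_1$ for constants depending only on $d_0$. Second, I would apply the DS lemma with this $N$ to obtain a constant $M_0$ such that every gallery-connected component of $\rho^{-1}(U_{\mathrm{ch}})$ has Weyl diameter at most $M_0$. Third, for any path-connected component $V$ of $\rho^{-1}(U)$, let $V_{\mathrm{ch}}$ be the set of chambers of $\Delta$ meeting $V$; a path in $V$ traverses a finite sequence of chambers, and any two consecutive chambers on the path necessarily share a panel, so $V_{\mathrm{ch}}$ is gallery-connected. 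Since $\rho(V_{\mathrm{ch}})\subset U_{\mathrm{ch}}$, the set $V_{\mathrm{ch}}$ lies inside a single gallery-connected component of $\rho^{-1}(U_{\mathrm{ch}})$, hence has Weyl diameter at most $M_0$. Finally, any two points of $V$ lie in chambers at Weyl distance at most $M_0$, so their metric distance is at most $(M_0+1)d_0$, yielding a bound of the form $SR + M$ with $S,M$ depending only on $d_0$.

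The hardest part will be ensuring that the dependence of $M_0$ on $N$ coming out of the DS lemma is at most affine (linear in $N$ plus a constant), since the target bound $SR + M$ is required to be affine in $R$. If the statement as quoted only promises some abstract function $M=M(N)$, I would have to unwind its proof---whose sketch is promised later in this section---to confirm that the induction produces only an affine blow-up. If linearity failed, one would need to refine the argument, for example by chopping $U$ into pieces of uniformly bounded metric size, applying the DS lemma to each piece, and stitching the estimates together along a path in $V$. The remaining bookkeeping on the panel-crossing rate and the comparison between Weyl and metric diameter is routine once $d_0$ is fixed.
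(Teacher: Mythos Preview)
Your outline is correct and you have put your finger on the only real issue: the black-box Dranishnikov--Schroeder statement (Lemma~1 in \cite{DS}) gives \emph{some} function $M_0=M_0(N)$, and you need it to be affine in $N$ (hence in $R$) to reach the conclusion $SR+M$. The paper does precisely the ``unwinding'' you anticipate: rather than quoting Lemma~1, it invokes the two internal ingredients of its proof, namely Lemma~2 (a wall separating $C$ from $U$ exists once $d(C,U)>D$ with $D=\frac{Rr}{\epsilon}$ linear in $R$) and Lemma~3 (all minimal galleries from $C$ to a given gallery-connected component of the preimage pass through a common chamber $E$ with $d(E,T(U))\le D$). From this one reads off directly $\mathrm{diam}(\tilde U)\le 2R+2D+M$ with $D=\frac{Rr}{\epsilon}$, giving $S=2(1+\tfrac{r}{\epsilon})$.

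The main methodological difference is that the paper stays entirely in the CAT(0) metric: Lemma~2 of \cite{DS} is already a metric statement, so there is no need for your two conversion steps (metric $\to$ Weyl diameter and back). This is slightly cleaner and avoids the panel-crossing bookkeeping, though your route would also work. One small point to tighten in your Step~3: a continuous path in $V$ can cross from one chamber to another through a face of codimension $\ge 2$, so ``consecutive chambers share a panel'' is not literally true; you should either perturb the path off the codimension-$2$ skeleton or note that any two chambers sharing a face are joined by a gallery within the residue of that face, which still lies over $U_{\mathrm{ch}}$.
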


\begin{remark}
	The explicit upper bound in the lemma above is $2R+2D+M$, where $M=$ the diameter of a chamber, $D=\frac{Rr}{\epsilon}$, $r\ge 2M$ fixed, and $\epsilon=$Lebesgue number of a particular cover on $B_r(p)$, $p$ being the barycenter of the base chamber $B$.
\end{remark}

\begin{proof}[\bf Proof of Theorem \ref{thm}]
	
	Suppose $cdim(\partial\Sigma)=n$, since $\partial\Sigma$ embeds in $\partial\Delta$, we know $cdim(\partial \Delta)\ge n$. Hence, we need only to show the other direction. We will prove the theorem in four steps. In step 1, we set up the cover for the boundary of the building $\partial\Delta$. In step 2 and 3, we prove the bounds for the Lebesgue number and size of the cover. In step 4, we argue that the capacity dimension of $\partial\Delta$ is less than or equal to $n$.
	
	\emph{Step 1: } We fix an apartment $\Sigma$ and choose a base chamber $C\subset \Sigma$. Let $\rho$ be the retraction $\rho=\rho_{\Sigma,C}: \Delta\rightarrow \Sigma$. Let $p$ be a fixed point in $C$ and let it be the base point of $\Delta$ as a CAT(0) space. Then by Proposition \ref{prop1} we have  $d(p,x)=d(p,\rho(x))$ for any $x\in\Delta$.
	
	Fix $A$ for the conical metric on $\partial\Sigma$. Let $\{\mathcal{U}_i\}_{i=1}^{n+1}$ be any cover of $\partial\Sigma$ realizing the capacity dimension $cdim(\partial\Sigma)$, i.e. $n+1$ families of $L$-separated sets that are $cL$-bounded on $\partial\Sigma$. We may assume that $c>1$ by enlarging $c$. Therefore $cL>L$. 
	
	For any $r\in U\in \mathcal{U}_i$ for some $i$, $r: [0,\infty)\rightarrow \Sigma$ is a geodesic ray with $r(0)=p$. Hence $U$ can be viewed as a cone in $\Sigma$ consisting of geodesic rays. We are going to intersect each $U$ as a cone with two spheres of radius $\frac 1L$ and $\frac 1{cL}$ separately, as shown in Figure \ref{Figure 1}. More specifically, for any $U\in\mathcal{U}_i$, let 
	\begin{align*}
	&V_U=\left\{r\left(\frac1L\right )|r\in U\right\}\\
	&W_U=\left\{r\left(\frac1{cL}\right)|r\in U\right\}
	\end{align*}
	and
	\begin{align*}
	&\mathcal{V}_i=\left\{V_U| U\in \mathcal{U}_i\right\}\\
	&\mathcal{W}_i=\left\{W_U| U\in \mathcal{U}_i\right\}.
	\end{align*}
	
	\begin{figure}
		\begin{center}
			\includegraphics[scale=0.2]{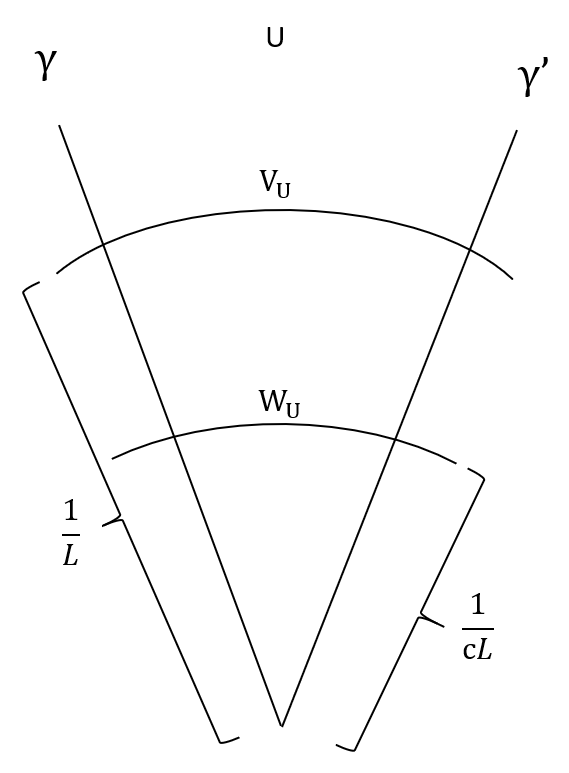}
			\caption{Intersections of $U$ with two spheres.\label{Figure 1}}
		\end{center}
	\end{figure}

	Therefore, if we let $S_{\frac1L}(p)\subset \Sigma$ denote the sphere of radius $\frac 1L$ in $\Sigma$ and $S_{\frac1{cL}}(p)\subset \Sigma$ denote the sphere of radius $\frac 1{cL}$, then  $\{\mathcal{V}_i\}_{i=1}^{n+1}$ is a cover of $S_{\frac1L}(p)$ and
	$\{\mathcal{W}_i\}_{i=1}^{n+1}$ is a cover of $ S_{\frac1{cL}}(p)$. In particular, by definition of the conical metric, $\mathcal{V}_i$ is $A$-separated and $\mathcal{W}_i$  is $A$-bounded for each $i$.

	To get the cover on $\partial\Delta$, we would like to pull back $\mathcal{V}_i$ through the retraction $\rho$. However, a problem here is, when a set  $V\in \mathcal{V}_i$ is very close to the boundary of a chamber, two different components of the preimage in $\rho^{-1}(V)$ can be very close, which is not ideal since we want the cover to be separated. We need some work to solve the problem, as shown in Figure \ref{Figure 2}.
	
	\begin{figure}
		\begin{center}
			\includegraphics[scale=0.2]{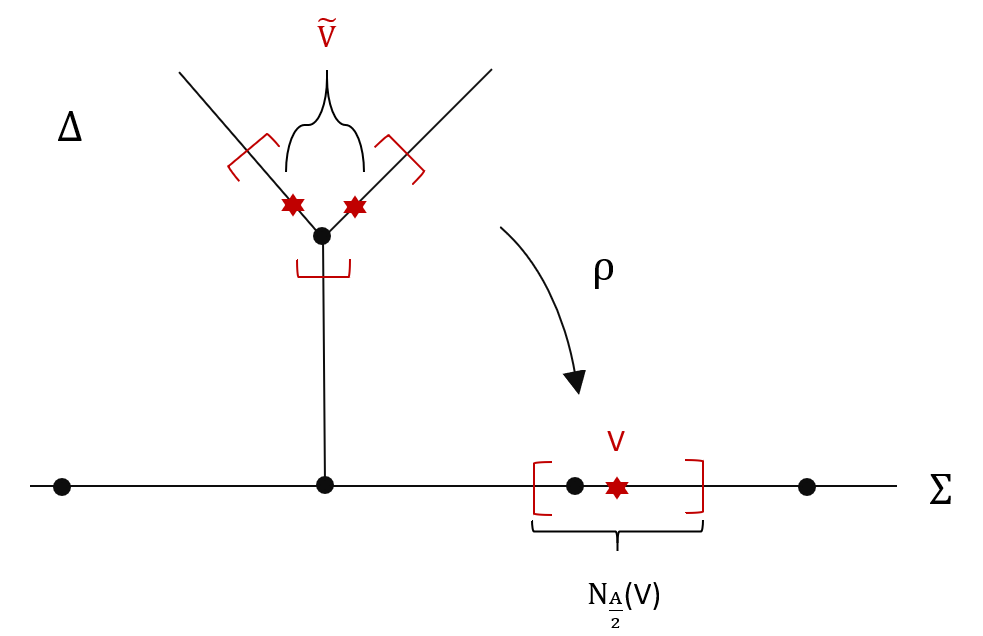}
			\caption{In the case of a tree, the set $V$ is a point. When $V$ is close to the endpoint, its preimages can be close. So instead, we take the preimage of neighborhood of $V$, and let all preimages of $V$ in one connected component be one set $\tilde{V}$.\label{Figure 2}}
		\end{center}
	\end{figure}
	
	Let $N_{\frac A2}(V)$ be the $\frac A2$ neighborhood of $V\in \mathcal{V}_i$ in $\Sigma$. Let $C_V$ be the set of all connected components of $\rho^{-1}(N_{\frac A2}(V))$ in $\Delta$.
	
	For each $K\in C_V$, let 
	$$\tilde{V}_K=\rho^{-1}(V)\bigcap K $$
	and let 
	$$\tilde{\mathcal{V}_i} = \left\{\tilde{V}_K \left| K\in C_V,  V\in \mathcal{V}_i\right.\right\}.$$
	
	Note that by Proposition \ref{prop1}, $\{\tilde{\mathcal{V}}_i\}_{i=1}^n$ is a cover of the sphere of radius $\frac 1L$ in $\Delta$, denoted by $\tilde{S}_{\frac 1L}(p)$. We will use $\tilde{\mathcal{V}}_i$ to generate the cover of $\partial\Delta$. More specifically, for each $\tilde{V}\in\tilde{\mathcal{V}}_i$, let 
	$$\tilde{U}_{\tilde{V}}=\left\{\text{ geodesic rays }\gamma  \left|  \gamma\left(\frac 1L\right)\in \tilde{V}\right.\right\}$$
	and
	$$\tilde{\mathcal{U}}_i=\left\{ \tilde{U}_{\tilde{V}}| \tilde{V}\in\mathcal{V}_i \right\}.$$
	In other words, $\tilde{U}_{\tilde{V}}$ is the set of geodesic rays whose intersections with $\tilde{S}_{\frac 1L}(p)$ are in $\tilde{V}$. Therefore, $\bigcup_{i=1}^{n+1}\tilde{U}_i$ is a cover of $\partial\Delta$. We need to show this cover satisfies the condition for capacity dimension, i.e. a upper bound of the mesh and a lower bound of the Lebesgue number. We will evaluate these two bounds on the two spheres separately.
	
	
	\emph{Step 2:} Now we do the estimate on $\tilde{S}_{\frac 1L}(p)$, which later will give a lower bound on the Lebesgue number of $\bigcup_{i=1}^{n+1}\tilde{U}_i$. More specifically, we prove the following lemma.
	\begin{lemma}
		$\tilde{\mathcal{V}_i}$ is $\frac A2$-separated on $\tilde{S}_{\frac 1L}(p)$ for each $i$.
	\end{lemma}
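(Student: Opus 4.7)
The plan is to split into two cases depending on whether the two elements $\tilde V_K, \tilde V'_{K'}$ of $\tilde{\mathcal V}_i$ project under $\rho$ to the same slice $V \in \mathcal V_i$ or to distinct slices. These two cases together exhaust the possibilities, since elements of $\tilde{\mathcal V}_i$ are indexed by pairs $(V,K)$.

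\textbf{Easy case.} Suppose $V \neq V'$, coming from distinct $U, U' \in \mathcal U_i$ which are $L$-separated in Moran's metric on $\partial\Sigma$. For any rays $[\gamma] \in U$, $[\gamma'] \in U'$, the time $t$ at which $d(\gamma(t), \gamma'(t)) = A$ satisfies $t \le 1/L$, so by Lemma \ref{claim}\ref{claim1} we have $d(\gamma(1/L), \gamma'(1/L)) \ge A$. Hence $V$ and $V'$ are $A$-separated in $\Sigma$. Since the apartment retraction $\rho$ is $1$-Lipschitz (Proposition \ref{prop1}), for any $x \in \tilde V_K$ and $y \in \tilde V'_{K'}$ we get
\begin{equation*}
d(x,y) \;\ge\; d(\rho(x), \rho(y)) \;\ge\; A \;\ge\; A/2.
\end{equation*}

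\textbf{Main case.} Suppose $V = V'$ but $K \neq K'$; I expect this to be the only nontrivial part, and it is precisely the situation the thickening $N_{A/2}(V)$ was introduced to handle. Take $x \in \tilde V_K$ and $y \in \tilde V_{K'}$ and assume for contradiction that $d(x,y) < A/2$. Let $\gamma\colon [0, d(x,y)] \to \Delta$ be the unique CAT(0) geodesic from $x$ to $y$. Since $\rho$ is $1$-Lipschitz and $\rho(x) \in V$, for every $t$ we have
\begin{equation*}
d\bigl(\rho(\gamma(t)), V\bigr) \;\le\; d\bigl(\rho(\gamma(t)), \rho(x)\bigr) \;\le\; t \;\le\; d(x,y) \;<\; A/2,
\end{equation*}
so $\rho(\gamma(t)) \in N_{A/2}(V)$, i.e.\ $\gamma([0,d(x,y)]) \subset \rho^{-1}(N_{A/2}(V))$. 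But then the image of $\gamma$ is a continuous path in $\rho^{-1}(N_{A/2}(V))$ joining $x \in K$ and $y \in K'$, forcing $K$ and $K'$ to lie in the same connected component of $\rho^{-1}(N_{A/2}(V))$ and contradicting $K \neq K'$. Therefore $d(x,y) \ge A/2$, and hence $d(\tilde V_K, \tilde V'_{K'}) \ge A/2$.

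The essential technical input is the $1$-Lipschitz property of $\rho$ from Proposition \ref{prop1}; the rest is a short continuity argument together with the very definition of the components collected in $C_V$. I do not anticipate any real obstacle beyond this, since the only delicate point (geodesics with same-$V$ endpoints potentially jumping across components) is exactly what the enlargement from $V$ to $N_{A/2}(V)$ was set up to block.
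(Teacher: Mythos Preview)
Your proof is correct and follows the same two-case split as the paper's own argument, using the $1$-Lipschitz property of $\rho$ in both cases; your contradiction argument in the main case is simply the contrapositive of the paper's direct argument (they pick a point $q$ on $[x,y]$ outside $\rho^{-1}(N_{A/2}(V))$ and bound $d(x,q)+d(q,y)\ge A$). One minor citation issue: Lemma~\ref{claim}\ref{claim1} is stated only for \emph{hyperbolic} CAT(0) spaces, so in this building context you should instead invoke Lemma~\ref{ratio}, from which the monotonicity of $t\mapsto d(\gamma(t),\gamma'(t))$ is immediate.
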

	
	\begin{proof}	
		For any $x\in \tilde{V_1}\in \tilde{\mathcal{V}}_i$ and $y\in \tilde{V_2}\in\tilde{\mathcal{V}}_i$, since $\rho: \tilde{V}\rightarrow V$ is onto, there exists $V_1,V_2\in \mathcal{V}_i$ such that $\rho(\tilde{V_1})=V_1$ and $\rho(\tilde{V_2})=V_2$. There are two cases.
		
		Case 1: $V_1\neq V_2$, then by proposition \ref{prop1}, we have 
		$$d(x,y)\ge d(\rho(x),\rho(y))\ge A>\frac A2.$$
		
		Case 2: $V_1=V_2 = V$, then since $\tilde{V_1}$ and $\tilde{V_2}$ belongs to different connected components of $\rho^{-1}(N_{\frac A2}(V))$, there exists a point $q\in[x,y]$\textbackslash $\rho^{-1}(N_{\frac A2}(V))$, where $[x,y]$ is the geodesic connecting $x$ and $y$. Therefore,
		$$d(x,y)=d(x,q)+d(q,y)\ge d(\rho(x),\rho(q))+d(\rho(q),\rho(y))\ge\frac A2 +\frac A2>\frac A2.$$
		
		Thus $\tilde{\mathcal{V}_i}$ is $\frac A2$-separated.
	\end{proof}
	
	\emph{Step 3:} Now we do the estimate on $\tilde{S}_{\frac 1{cL}}(p)$, which later will give an upper bound on the mesh of $\bigcup_{i=1}^{n+1}\tilde{U}_i$. More specifically, for each $\tilde{U}\in\tilde{\mathcal{U}}_i$, let 
	$$\tilde{W}_{\tilde{U}}=\left\{\tilde{\gamma}\left(\frac1{cL}\right) \left| \tilde{\gamma}\in \tilde{U}\right.\right\}$$
	and
	$$\tilde{\mathcal{W}}_i = \left\{ \tilde{W}_{\tilde{U}} | \tilde{U}\in\tilde{\mathcal{U}}_i \right\}.$$
	Then $\bigcup_{i=1}^{n+1}\tilde{\mathcal{W}}_i$ is a cover of $\tilde{S}_{\frac 1{cL}}(p)$. We want to prove the following lemma.
	\begin{lemma}
		For each $i$ and each $\tilde{W}\in\tilde{\mathcal{W}}_i$, we have $diam(\tilde{W})\le S'A+M$ for some constants $S'$ and $M$ that depend only on the diameter of the base chamber $C$.
	\end{lemma}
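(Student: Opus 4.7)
The plan is to reduce the diameter bound on $\tilde{W}$ to Lemma \ref{lem} (the preimage-diameter estimate). Specifically, given $\tilde{y}_1, \tilde{y}_2 \in \tilde{W}_{\tilde{U}}$, I will construct a continuous path in $\Delta$ from $\tilde{y}_1$ to $\tilde{y}_2$ whose $\rho$-image stays inside a ball of controlled radius in $\Sigma$, and then invoke Lemma \ref{lem}. To build the path, let $\tilde{\gamma}_1, \tilde{\gamma}_2$ be the geodesic rays from $p$ with $\tilde{\gamma}_j(1/(cL)) = \tilde{y}_j$ and $\tilde{\gamma}_j(1/L) \in \tilde{V}_K$, where $K$ is the component of $\rho^{-1}(N_{A/2}(V))$ picked out by the construction of $\tilde{V}_K$ (path-connectedness is automatic because $N_{A/2}(V)$ is open and $\Delta$ is locally path-connected). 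Choose any path $\sigma : [0,1] \to K$ joining $\tilde{\gamma}_1(1/L)$ to $\tilde{\gamma}_2(1/L)$, let $\tau_t$ be the unique CAT(0) geodesic from $p$ to $\sigma(t)$, and set $\eta(t) := \tau_t(1/(cL))$. This is well-defined for $L$ small, since Proposition \ref{prop1} gives $d(p,\sigma(t)) = d(p,\rho(\sigma(t))) \ge 1/L - A/2 > 1/(cL)$.

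The heart of the argument is to trap $\rho \circ \eta$ in a bounded ball of $\Sigma$. Because $\tau_t$ starts inside the base chamber $C$, it lies in some apartment containing $C$, and $\rho = \rho_{\Sigma,C}$ restricts to an isomorphism on any such apartment; therefore $\rho \circ \tau_t$ is a genuine geodesic in $\Sigma$ from $p$ to $\rho(\sigma(t)) \in N_{A/2}(V)$. For each $t$, pick $v_t \in V$ with $d(v_t, \rho(\sigma(t))) \le A/2$ and let $w_t \in W_U$ be the point at distance $1/(cL)$ from $p$ on the geodesic $[p, v_t]$. Comparing the two geodesics $[p,v_t]$ and $\rho \circ \tau_t$ from the common basepoint $p$ at the common time $s^* := \min(1/L,\, d(p, \rho(\sigma(t)))) \ge 1/L - A/2$, using the triangle inequality at time $s^*$ and then Lemma \ref{ratio} to pull back to time $1/(cL)$, I would obtain $d(\rho(\eta(t)), w_t) \lesssim A/c$. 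Since $W_U$ itself has diameter at most $A$ (directly from the definition of $d_M$ and Lemma \ref{ratio}), fixing any $w_0 \in W_U$ confines $\rho(\eta(t))$ to a ball $B \subset \Sigma$ of radius $O(A)$ centered at $w_0$, uniformly in $t$.

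With this in hand, $\eta$ is a continuous path in $\rho^{-1}(B)$ joining $\tilde{y}_1$ and $\tilde{y}_2$, so both endpoints lie in a single path-connected component of $\rho^{-1}(B)$. Applying Lemma \ref{lem} with $R = diam(B) = O(A)$ yields $d(\tilde{y}_1, \tilde{y}_2) \le S' A + M$ for constants $S', M$ depending only on the diameter of $C$, as required. I expect the main obstacle to be the middle step: one must carefully invoke the apartment-containment property to see that $\rho \circ \tau_t$ is a geodesic, and chain together applications of Lemma \ref{ratio} with the $A/2$ fattening so that the radius of the containing ball $B$ does not blow up in $L$. Once $\rho \circ \eta$ is secured inside such a uniform ball, the conclusion is a direct application of Lemma \ref{lem}.
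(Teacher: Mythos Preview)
Your proposal is correct and follows essentially the same strategy as the paper: use the connectedness of the component $K$ to place $\tilde{W}$ inside a single path-connected piece of the $\rho$-preimage of a set of diameter $O(A)$, then invoke Lemma \ref{lem}. The only difference is packaging: the paper defines an auxiliary connected set $N(\tilde{W}) = \{\tilde{\gamma}(1/(cL)) : \tilde{\gamma}(1/L)\in K\}\supset \tilde{W}$ and bounds $\rho(N(\tilde{W}))\subset N(W)\subset N_{A/(2c)}(W)$ all at once, whereas you fix two points and thread an explicit path $\eta$ between them; your $\eta$ is precisely a path inside the paper's $N(\tilde{W})$, and your ball $B$ plays the role of $N_{A/(2c)}(W)$.
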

	
	\begin{proof}
		For each $\tilde{U}\in\tilde{\mathcal{U}}_i$, let 
		$$c\tilde{U}=\bigcup_{r\in\tilde{U}}r([0,\infty))\subseteq \Delta$$
		be the cone corresponding to $\tilde{U}$ in $\Delta$. The intersection of $c\tilde{U}$ with $\tilde{S}_{\frac 1L}(p)$ is a set $\tilde{V}\in\tilde{\mathcal{V}}_i$, and its intersection with $\tilde{S}_{\frac 1{cL}}(p)$ is a set $\tilde{W}\in\tilde{\mathcal{W}}_i$. We will show $diam(\tilde{W})$ is bounded.
		
		Let $V=\rho(\tilde{V})$ and $W=\rho(\tilde{W})$. Then $V\in\mathcal{V}_i$. Notice $\rho$ maps a geodeisc ray in $\Delta$ to a geodesic ray in $\Sigma$, hence by definition of $\tilde{\mathcal{W}}_i$ and $\mathcal{W}_i$, we also have $W\in\mathcal{W}_i$. Recall that $\tilde{V}$ is contained in a connected component $K$ of $\rho^{-1}(N_{\frac A2}(V))$. Let
		$$N(\tilde{W}) = \left\{ \tilde{\gamma}\left(\frac1{cL}\right)\left| \tilde{\gamma}\left(\frac 1L\right)\in K\right. \right\}$$
		and
		$$N(W)=\left\{ \gamma\left(\frac1{cL}\right)\left| \gamma\left(\frac 1L\right)\in N_{\frac A2}(V)\right. \right\}.$$
		Then $\tilde{W}\subset N(\tilde{W})$. Since $K$ is connected, $N(\tilde{W})$ is also connected by the uniqueness of geodesics in CAT(0) spaces. In addition, $\rho(N(\tilde{W}))=N(W)$ since $\rho$ maps a geodesic ray in $\Delta$ to a geodesic ray in $\Sigma$. Therefore, we can conclude that $N(\tilde{W})$ is contained in a connected component of $\rho^{-1}(N(W))$.

		By Lemma \ref{ratio}, we have
		\begin{align*}
		\frac B{\frac A2}&\le \frac{\frac 1L}{\frac 1{cL}}\\
		B&\le \frac A{2c}
		\end{align*}
		Where $B$ is shown in Figure \ref{Figure 3}. Hence we have
		$$N(W)\subset N_{\frac A{2c}}(W)$$
		where $N_{\frac A{2c}}(W)$ is the $\frac A{2c}$ neighborhood of $W$. Hence
		$$diam(N(W))\le diam(N_{\frac A{2c}}(W))\le A+2\cdot\frac A{2c}=\frac{c+1}{c}\cdot A.$$
		
		\begin{figure}
			\begin{center}
				\includegraphics[scale=0.2]{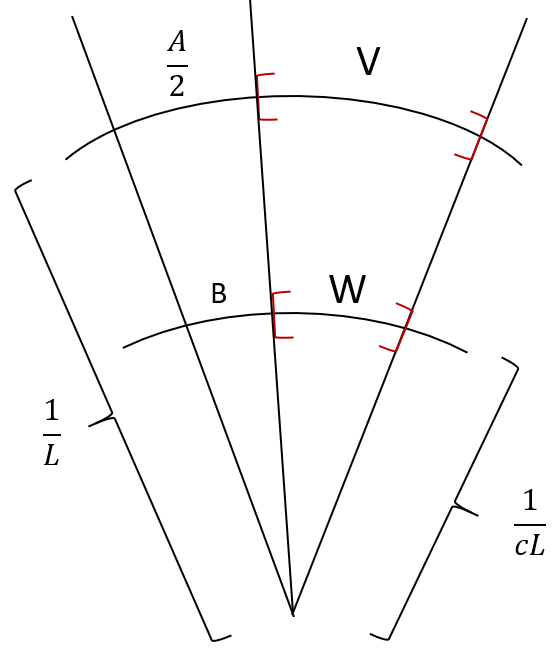}
				\caption{Relation between $N_{\frac A2}(V)$ and $N(W)$.\label{Figure 3}}
			\end{center}
		\end{figure}
		
		By Lemma \ref{lem}, we have
		$$diam(N(\tilde{W}))\le S\cdot\frac{c+1}{c}\cdot A + M.$$
		In particular, let $S'=S\cdot\frac{c+1}c$, we have $diam(\tilde{W})\le S'A+M.$
	\end{proof}
	
	
	\emph{Step 4:}	
	Finally, we estimate the bounds of $\bigcup_{i=1}^{n+1}\tilde{\mathcal{U}}_i$ on $\partial\Delta$. We still use $A$ for the conical metric on $\partial\Delta$. Recall $\bigcup_{i=1}^{n+1}\tilde{\mathcal{U}}_i$ is a cover of $\partial\Delta$. We prove the following lemma.
	\begin{lemma}\label{lastlem}
		For each $i$, $\tilde{\mathcal{U}}_i$ is $\frac L2$-separated and $c'L$ bounded for some constant $c'$.
	\end{lemma}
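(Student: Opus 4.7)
The plan is to convert the sphere-level bounds established in the two preceding lemmas (that $\tilde{\mathcal{V}}_i$ is $\frac{A}{2}$-separated on $\tilde{S}_{1/L}(p)$ and each $\tilde{W}\in\tilde{\mathcal{W}}_i$ has diameter at most $S'A+M$ on $\tilde{S}_{1/cL}(p)$) into bounds on the Moran's metric on $\partial\Delta$. Both conversions rest on Lemma \ref{ratio}, which says that for two rays based at the same point, the quotient $d(\gamma(s),\gamma'(s))/s$ is non-decreasing in $s$; equivalently, once the distance between them is positive it grows at least linearly.

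For the $\frac{L}{2}$-separation, I would take arbitrary distinct $\tilde{U},\tilde{U}'\in\tilde{\mathcal{U}}_i$ and rays $\tilde{\gamma}\in\tilde{U}$, $\tilde{\gamma}'\in\tilde{U}'$. By construction, the points $\tilde{\gamma}(1/L)$ and $\tilde{\gamma}'(1/L)$ lie in distinct elements of $\tilde{\mathcal{V}}_i$, so they are at distance at least $A/2$. Let $t_A$ be the Moran time, i.e., the value with $d(\tilde{\gamma}(t_A),\tilde{\gamma}'(t_A))=A$. Applying Lemma \ref{ratio} with $s=1/L$ in the case $t_A\ge 1/L$ gives $A/t_A\ge L\cdot(A/2)$, so $t_A\le 2/L$ and hence $d_M(\tilde{\gamma},\tilde{\gamma}')=1/t_A\ge L/2$; the case $t_A<1/L$ yields an even stronger bound.

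For the $c'L$ diameter bound, I take $\tilde{\gamma},\tilde{\gamma}'\in\tilde{U}\in\tilde{\mathcal{U}}_i$, both of which meet $\tilde{S}_{1/cL}(p)$ inside the single set $\tilde{W}_{\tilde{U}}$, so $d(\tilde{\gamma}(1/cL),\tilde{\gamma}'(1/cL))\le S'A+M$. To upper bound $d_M=1/t_A$ I need a lower bound on $t_A$. I use Lemma \ref{ratio} in the direction $d(\tilde{\gamma}(s),\tilde{\gamma}'(s))\le scL(S'A+M)$ for $s\le 1/cL$; setting this equal to $A$ handles the case $t_A\le 1/cL$, giving $t_A\ge A/(cL(S'A+M))$, while the case $t_A>1/cL$ is automatic since $1/cL>A/(cL(S'A+M))$. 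Either way $d_M\le cL(S'+M/A)$, which is the desired bound with $c'=c(S'+M/A)$.

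I do not anticipate any real obstacle here; this step is essentially bookkeeping that applies Lemma \ref{ratio} twice to transport the previously established bounds from the two auxiliary spheres to the boundary. The only care required is in selecting the correct direction of Lemma \ref{ratio} for each case and in noting the edge case $t_A=\infty$ (when the rays never reach distance $A$), in which event $d_M=0$ and both bounds hold trivially.
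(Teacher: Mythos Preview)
Your proposal is correct and follows essentially the same approach as the paper: both arguments apply Lemma~\ref{ratio} once to transport the $\frac{A}{2}$-separation on $\tilde{S}_{1/L}(p)$ into the $\frac{L}{2}$-separation of $\tilde{\mathcal{U}}_i$, and once to transport the $S'A+M$ diameter bound on $\tilde{S}_{1/(cL)}(p)$ into the $c'L$ bound with $c'=c(S'A+M)/A$. Your version is slightly more explicit in handling the cases $t_A\lessgtr 1/L$, $t_A\lessgtr 1/(cL)$, and $t_A=\infty$, which the paper's figure-based ratio inequalities leave implicit, but the substance is identical.
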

	
	\begin{proof}
		For any $i$, suppose $\tilde{\mathcal{U}}_i$ is $L'$-separated, that is, let $\frac 1{L'}=\inf d(\gamma,\gamma')$ where the infimum is taken over all $\gamma$ and $\gamma'$ that belong to two different sets in  $\tilde{\mathcal{U}}_i$. Then as shown in Figure \ref{Figure 4} left, by definition of the conical metric and Lemma \ref{ratio} we have
		$$\frac{\frac A2}{A}\le \frac{\frac 1L}{\frac 1{L'}},\quad\text{hence } L'\ge\frac L2.$$
		
		\begin{figure}
			\begin{center}
				\includegraphics[scale=0.2]{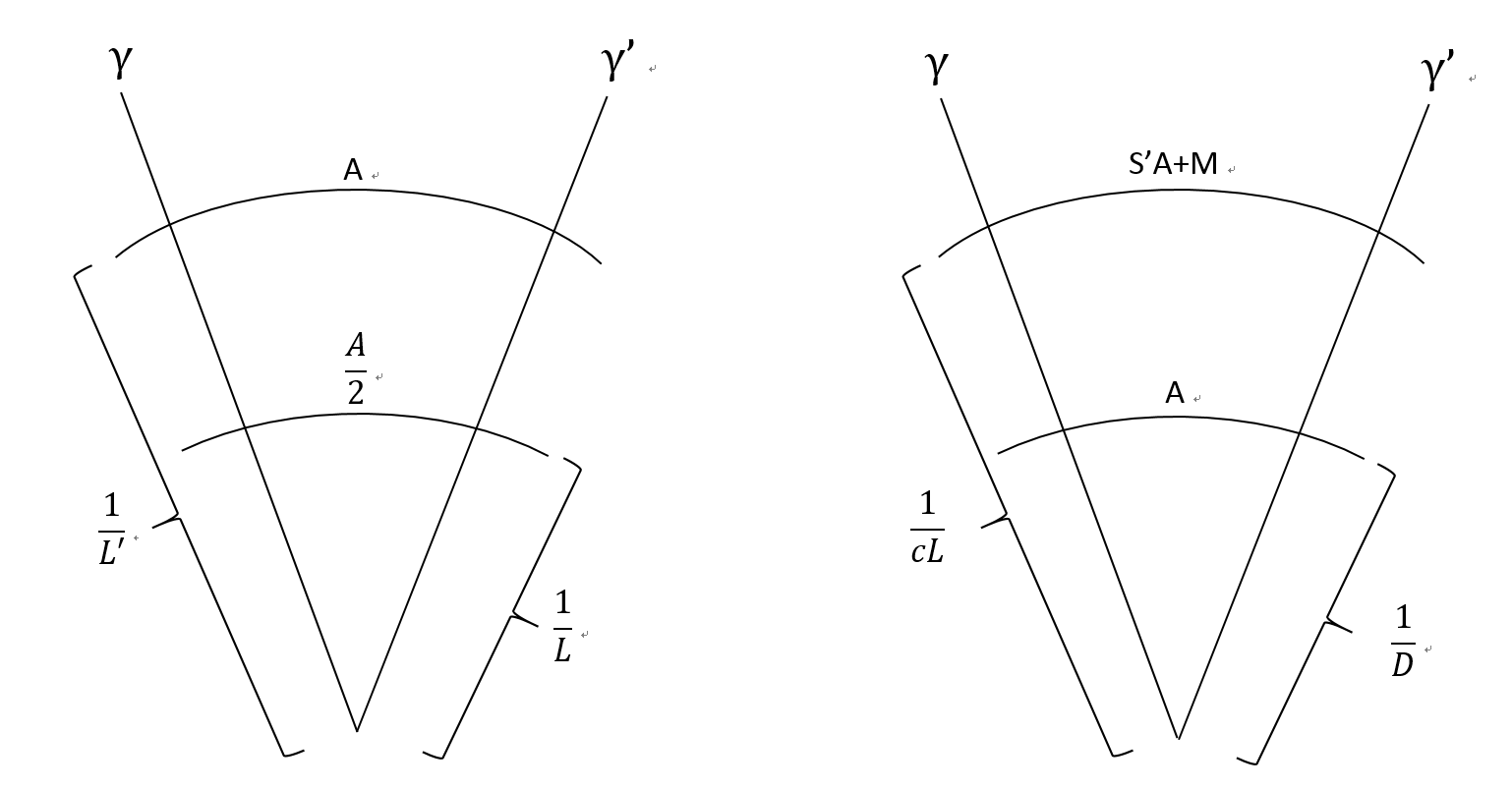}
				\caption{Intersections of $U$ with two spheres.\label{Figure 4}}
			\end{center}
		\end{figure}
		
		Suppose $\tilde{\mathcal{U}}_i$ is $D$ bounded. Then similarly, as shown in Firgure \ref{Figure 4} right, by Lemma \ref{ratio} we have
		$$\frac{A}{S'A+M}\le\frac{\frac 1D}{\frac1{cL}},\quad\text{hence } D\le cL\cdot\frac{S'A+M}{A}.$$
		Let $c'=c\cdot\frac{S'A+M}{A}$, then $D\le c'L$.
	\end{proof}
	
	Finally, by definition of capacity dimension, Lemma \ref{lastlem} implies $cdim(\partial\Delta)\le n$. Hence $cdim(\partial\Delta)= n$.
\end{proof}


Now we prove Lemma \ref{lem}. 
\begin{proof}[\bf Proof of Lemma \ref{lem}]
	Let $W$ be a Coxeter group, $\Sigma$ be the Davis realization of $W$ and $C$ be the base chamber. Let $p$ be the barycenter of $C$. There are two lemmas proved in \cite{DS}.
	\begin{lemma}\label{lem01} (Lemma 2 in \cite{DS})
		For any $R>0$ there exists $D=D(R)$ such that for any subset $U\subset \Sigma$ of diameter $R$ satisfying $d(C,U)>D$ there exists a codimension-one face of $C$ such that the wall containing that face separates $C$ from $U$.
	\end{lemma}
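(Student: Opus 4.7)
The plan is to reformulate the conclusion in terms of the finite family of half-spaces bounded by the walls adjacent to $C$, and then prove the resulting boundedness statement via a CAT(0) compactness argument combined with the combinatorial structure of the Coxeter complex.

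Let $S$ denote the (finite) set of codimension-one faces of $C$. For each $s \in S$ let $\overline{\Sigma_s^+}$ be the closed half-space of $\Sigma$ bounded by $H_s$ and containing $C$, so that $C = \bigcap_{s \in S} \overline{\Sigma_s^+}$. The failure of the conclusion for a given $U$ means that $U \cap \overline{\Sigma_s^+} \neq \emptyset$ for every $s \in S$, and hence (since $diam(U) \le R$) that $d(u, \overline{\Sigma_s^+}) \le R$ for every $s$, for any chosen point $u \in U$. It therefore suffices to show that the set
$$T_R := \bigl\{ x \in \Sigma : d(x, \overline{\Sigma_s^+}) \le R \text{ for all } s \in S \bigr\}$$
is bounded; one can then take $D(R) := \sup_{x \in T_R} d(x, C)$.

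To prove $T_R$ is bounded, I would argue by contradiction: suppose $x_n \in T_R$ with $d(x_n, C) \to \infty$. For each $n$, since $x_n \notin C$, there is some $s \in S$ with $x_n \in \Sigma_s^-$; by finiteness of $S$ and passing to a subsequence, assume $x_n \in \Sigma_{s_0}^-$ for a fixed $s_0$. The condition $d(x_n, \overline{\Sigma_{s_0}^+}) \le R$ then reads $d(x_n, H_{s_0}) \le R$, so the CAT(0) projections $x_n' := \pi_{H_{s_0}}(x_n) \in H_{s_0}$ satisfy $d(x_n, x_n') \le R$ and $d(p, x_n') \to \infty$, while for every other $s \in S$ the triangle inequality gives $d(x_n', \overline{\Sigma_s^+}) \le 2R$.

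Now $H_{s_0}$ is convex in $\Sigma$ and is itself the Davis realization of a smaller Coxeter system (associated with the stabilizer of $H_{s_0}$ in $W$), with base chamber the codimension-one face $F_{s_0} = C \cap H_{s_0}$ and with its own adjacent walls $\{H_s \cap H_{s_0} : m(s,s_0) < \infty\}$. The sequence $x_n'$ then contradicts the statement of the lemma applied one dimension lower, so an induction on $\dim \Sigma$, with the trivial base case $\dim \Sigma = 0$, should close the argument. The main obstacle will be handling walls $H_s$ with $m(s, s_0) = \infty$, whose intersections with $H_{s_0}$ are empty: one must verify that $\overline{\Sigma_s^+} \cap H_{s_0}$ is either all of $H_{s_0}$ (contributing no new constraint) or a half-space of $H_{s_0}$ bounded by a wall adjacent to $F_{s_0}$, so that the constraint $d(x_n', \overline{\Sigma_s^+}) \le 2R$ can be repackaged as an analogous constraint inside $H_{s_0}$ and the inductive hypothesis applied cleanly.
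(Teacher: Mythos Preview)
Your reformulation in terms of the boundedness of $T_R$ is correct and natural, but the inductive scheme you outline has a genuine gap at the constraint-passing step, and it also misses a feature that the paper actually needs.

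The paper does not prove this lemma directly; it cites \cite{DS} and records the explicit constant $D=\tfrac{Rr}{\epsilon}$, where $r$ is a fixed radius larger than twice the diameter of a chamber and $\epsilon$ is the Lebesgue number of the open cover $\{\Sigma_s^{-}\cap S_r(p):s\in S\}$ of the sphere $S_r(p)$. The argument behind this constant is short: any point outside $C$ is separated from $C$ by some wall through a face of $C$, so the sets $\Sigma_s^{-}\cap S_r(p)$ do cover $S_r(p)$; by Lemma~\ref{ratio} the geodesics from $p$ to the points of $U$ meet $S_r(p)$ in a set of diameter at most $\tfrac{rR}{d(p,U)}<\tfrac{rR}{D}=\epsilon$, hence this set lies in a single $\Sigma_s^{-}$; convexity of the half-space then forces $U\subset\overline{\Sigma_s^{-}}$, so $H_s$ separates $C$ from $U$. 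No induction and no structure theory of walls is needed, and the bound is \emph{linear} in $R$, which is exactly what the paper uses afterwards to obtain $S=2(1+\tfrac{r}{\epsilon})$ in Lemma~\ref{lem}.

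Your approach, by contrast, tries to descend into the wall $H_{s_0}$. The step that breaks is the transfer of the constraints $d_{\Sigma}(x_n',\overline{\Sigma_s^{+}})\le 2R$ to constraints inside $H_{s_0}$. You need $d_{H_{s_0}}\bigl(x_n',\overline{\Sigma_s^{+}}\cap H_{s_0}\bigr)$ to be bounded, but since $H_{s_0}$ is a convex subspace, this intrinsic distance is \emph{at least} the ambient distance, not at most. Concretely, the nearest point of $\overline{\Sigma_s^{+}}$ to $x_n'$ need not lie on $H_{s_0}$; projecting it there preserves membership in $\overline{\Sigma_s^{+}}$ only when $s_0$ stabilizes $H_s$, i.e.\ when $m(s,s_0)=2$. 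For $m(s,s_0)\notin\{2,\infty\}$ the inequality goes the wrong way and the induction does not close. In addition, the assertion that $H_{s_0}$ is itself the Davis realization of a Coxeter system with base chamber $F_{s_0}$ and adjacent walls $\{H_s\cap H_{s_0}:m(s,s_0)<\infty\}$ is not a triviality: the wall $H_{s_0}$ carries an action of the centralizer $C_W(s_0)$, and identifying its chamber structure with the faces of $F_{s_0}$ requires a separate argument (Brink--Howlett/Deodhar type results), which you have not supplied.

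Finally, even if these gaps were repaired, a proof by contradiction would only give existence of some $D(R)$, with no control on its growth. The paper's subsequent Lemma~\ref{lem} needs the linear form $D(R)=\tfrac{r}{\epsilon}R$ to produce the constant $S$; your method, as written, cannot deliver that.
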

	
	In the lemma above, $D=\frac{Rr}{\epsilon}$, where $r$ is a fixed number greater than twice the diameter of a chamber, $\epsilon$ is the Lebesgue number of a particular cover of $S_r(p)$.
	
	Recall $d$ is the CAT(0) distance and $\delta$ be the gallery distance. For $X\subset \Sigma$, denote by $T(X)$ the union of all chambers that intersect $X$. Let $U$ be a subset of $\Sigma$ of $\delta$-diameter $\le N$. There exists $R>0$ depending only on $N$ (and $\Sigma$) such that the $d$-diameter of $T(U)$ is $\le R$.  Iterated application of Lemma \ref{lem01} gives us a minimal gallery $\gamma=(C,w_1C,\ldots,w_kC)$ such that the wall between $w_iC$ and $w_{i+1}C$ separates $w_iC$ from $T(U)$ and $d(w_kC,T(U))\le D$. Note that this separation property implies that every chamber which meets $U$ can be joined to $C$ by a minimal gallery which is extending $\gamma$.
	
	\begin{lemma}\label{lem02} (Lemma 3 in \cite{DS})
		Let $U$ and $\gamma$ be given as above. Recall $C$ is the base chamber which is fixed pointwise by $\rho$. For any chamber $C'\subset\Delta$ meeting $\rho^{-1}(U)$ there is a minimal gallery from $C$ to $C'$ whose $\rho$ projection extends $\gamma$.
		
		For any gallery-connected component $V\subset\Delta$ of $\rho^{-1}(U)$ there exists a chamber $E\in \rho^{-1}(w_kC)$ such that any minimal gallery from $C$ to a chamber in $V$ whose $\rho$-projection prolongs $\gamma$ passes through $E$.
	\end{lemma}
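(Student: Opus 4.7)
The plan is to prove both parts of Lemma~\ref{lem02} by leveraging two structural features of buildings: $\rho$ restricts to an isomorphism on every apartment containing $C$, and the combinatorial principle that a gallery from $C$ to $D$ in a Coxeter complex is minimal if and only if it crosses each wall separating $C$ from $D$ exactly once.

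For the first part, I begin with the observation that if $C' \subset \Delta$ meets $\rho^{-1}(U)$, then $\rho(C') \subset T(U)$, so every wall $H_i$ crossed by $\gamma$ (constructed so as to separate $C$ from $T(U)$) also separates $C$ from $\rho(C')$. Applying the exchange condition to any minimal gallery from $C$ to $\rho(C')$ in $\Sigma$, one may reorder its crossings so that $H_1, \dots, H_k$ are crossed first; this produces a minimal gallery from $C$ to $\rho(C')$ in $\Sigma$ whose initial segment is $\gamma$. I then invoke the building axiom to pick an apartment $\Sigma'$ of $\Delta$ containing both $C$ and $C'$, and lift the minimal gallery via $(\rho|_{\Sigma'})^{-1}$, which is well defined because $\rho$ fixes $C$ pointwise and maps $\Sigma'$ isomorphically onto $\Sigma$. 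The lifted gallery is minimal in $\Sigma'$, hence in $\Delta$, and its $\rho$-projection is the chosen gallery in $\Sigma$, which extends $\gamma$ by construction.

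For the second part, I fix a chamber $C'_0 \in V$, use part one to obtain a minimal gallery from $C$ to $C'_0$ extending $\gamma$, and let $E \in \rho^{-1}(w_k C)$ be the chamber immediately after the initial segment $\gamma$. By gallery-connectedness of $V$, any other $C' \in V$ is reached from $C'_0$ via a chain of panel-adjacent chambers, all meeting $\rho^{-1}(U)$. I induct on the length of this chain: assuming every minimal gallery from $C$ to the current chamber $C'_j$ that extends $\gamma$ passes through $E$, I argue the same for the next chamber $C'_{j+1}$. The inductive step uses that the panel shared between $C'_j$ and $C'_{j+1}$ lies in $\rho^{-1}(T(U))$, so its $\rho$-image lies inside $T(U)$ and is therefore disjoint from each wall $H_1, \dots, H_k$ of $\gamma$. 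This separation prevents any cancellation in a minimal gallery from rerouting through the initial segment or bypassing $E$.

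The main obstacle will be the inductive step. One has to keep careful track of which walls are crossed between $E$ and $C'_j$ and argue, via the deletion condition in $W$, that appending or rerouting through a panel lying entirely inside $T(U)$ cannot shorten the prefix $C \to \gamma \to E$. Once this combinatorial bookkeeping is in place, $E$ is independent of the chamber of $V$ chosen, and the bottleneck property stated in the lemma follows.
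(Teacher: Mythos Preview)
The paper does not prove Lemma~\ref{lem02} at all; it is quoted as Lemma~3 of \cite{DS} and used as a black box inside the proof of Lemma~\ref{lem}. So there is no proof in the paper to compare against, and I can only comment on your sketch on its own merits.

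Your argument for the first assertion is essentially correct and standard. Once you observe that each wall $H_i$ crossed by $\gamma$ separates $C$ from $\rho(C')\in T(U)$, the factorisation $\delta(C,C')=w_k\cdot\bigl(w_k^{-1}\delta(C,C')\bigr)$ is length–additive, so a minimal gallery from $C$ to $\rho(C')$ in $\Sigma$ extending $\gamma$ exists; lifting through an apartment containing $C$ and $C'$ then finishes the job. (What you call ``reordering via the exchange condition'' is really just this length additivity; no actual reordering of an arbitrary gallery is needed.)

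For the second assertion, however, your inductive plan has a real gap. Two issues. First, your claim that the $\rho$-image of the panel between $C'_j$ and $C'_{j+1}$ is \emph{disjoint} from every $H_i$ is not literally true: when $\rho(C'_j)=\rho(C'_{j+1})=wC$, the image is the $s$-panel of $wC$, and this panel can perfectly well lie on some $H_i$ (precisely when the $s$-neighbour $wsC$ falls outside $T(U)$). Second, and more seriously, even where disjointness holds you never explain the mechanism that transfers the inductive hypothesis from $C'_j$ to $C'_{j+1}$. A minimal gallery from $C$ to $C'_{j+1}$ whose $\rho$-projection prolongs $\gamma$ need not visit $C'_j$ at all, so you cannot simply append one step and quote the hypothesis. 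The easy cases are when $l\bigl(\delta(C,C'_{j+1})\bigr)=l\bigl(\delta(C,C'_j)\bigr)\pm 1$, since then one of the two minimal galleries extends the other. The hard case is $\delta(C,C'_j)=\delta(C,C'_{j+1})$; here one must bring in the projection of $C$ onto the shared panel and use the full strength of the construction of $\gamma$ --- namely that $H_i$ separates $w_{i-1}C$ (not merely $C$) from $T(U)$ --- to see that the first $k$ steps of the lifted gallery are forced. Your appeal to the deletion condition is a gesture in the right direction but does not, as written, handle this case.
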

	
	Recall $diam(U)\le R$ and $M$ is the diameter of a chamber. Let $\tilde{U}$  be any connected component of $\rho^{-1}(U)$, then $\tilde{U}$ is contained in a gallery-connected component $V$ of $\rho^{-1}(U)$. By Lemma \ref{lem02}, we know 
	$d(\tilde{U}, E)\le D.$
	Hence $$diam(\tilde{U})\le 2R+2D+M$$
	where $D=\frac {Rr}{\epsilon}$. The proof is finished by letting $S=2(1+\frac{r}{\epsilon})$.
\end{proof}

As a quick corollary, we have the following:
\begin{corollary}
	Let $X$ be the product of $n$ trees. Then $cdim({\partial}X)=n-1$.
\end{corollary}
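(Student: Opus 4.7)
The plan is to realize $X$ as a nonspherical building, identify its apartment as $\mathbb{R}^n$, apply Theorem \ref{thm}, and then carry out an explicit computation on a sphere. First I would verify that $X$ is a nonspherical building. Each infinite tree without leaves (with vertices of valence $\ge 2$) is the Davis realization of a building of type $D_\infty$, its apartments being isometric copies of $\mathbb{R}$. The product of buildings is itself a building whose Weyl group is the direct product of the factor Weyl groups, so $X$ is a building with Coxeter group $D_\infty^n$; since $D_\infty$ is infinite, so is $D_\infty^n$, and hence $X$ is nonspherical. The apartment of $X$ is the product of the apartments in the factor trees, so $\Sigma \cong \mathbb{R}^n$ with the flat Euclidean metric, and Theorem \ref{thm} then gives $cdim(\partial X) = cdim(\partial \mathbb{R}^n)$.

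Next I would identify Moran's metric on $\partial \mathbb{R}^n$. Taking the origin as basepoint, every geodesic ray has the form $\gamma_v(t) = tv$ with $v \in S^{n-1}$, and $d(\gamma_v(t), \gamma_{v'}(t)) = t\|v-v'\|$. Setting this equal to $A$ yields $d_{A,0}([\gamma_v], [\gamma_{v'}]) = \|v-v'\|/A$, so $(\partial \mathbb{R}^n, d_M)$ is simply a rescaling of $S^{n-1}$ equipped with its Euclidean chord metric, hence bi-Lipschitz to the standard round sphere. Since capacity dimension is a bi-Lipschitz invariant, $cdim(\partial \mathbb{R}^n) = cdim(S^{n-1})$.

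Finally I would establish $cdim(S^{n-1}) = n-1$. The lower bound comes from the general observation that any cover realizing the capacity dimension also fulfils the defining condition for topological dimension (Definition \ref{topologicaldimension}), so $cdim(S^{n-1}) \ge \dim(S^{n-1}) = n-1$. For the upper bound I would exploit the local Euclidean structure of $S^{n-1}$: cover it by finitely many charts bi-Lipschitz to bounded open subsets of $\mathbb{R}^{n-1}$, and in each chart apply the classical construction for $\mathbb{R}^{n-1}$ producing $n$ pairwise-disjoint $\lambda$-bounded families whose union has Lebesgue number proportional to $\lambda$ (via an offset, slightly thickened cubical tiling distributed among $n$ layers). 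Combining the two bounds yields $cdim(S^{n-1}) = n-1$, and hence $cdim(\partial X) = n-1$.

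The only point requiring care is the uniform-scale construction of an order-$n$ cover of $S^{n-1}$ with a scale-independent capacity constant; once the standard thickened-cube construction on $\mathbb{R}^{n-1}$ is recorded and transferred through the finite chart atlas, the corollary follows immediately from Theorem \ref{thm}.
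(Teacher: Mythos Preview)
Your proposal is correct and takes essentially the same approach as the paper: realize $X$ as a nonspherical (Euclidean) building with apartment $\Sigma\cong\mathbb{R}^n$, invoke Theorem~\ref{thm} to obtain $cdim(\partial X)=cdim(\partial\mathbb{R}^n)$, and then use $cdim(\partial\mathbb{R}^n)=n-1$. The paper records the corollary without proof, treating $cdim(\partial\Sigma)=n-1$ as known, so your explicit identification of Moran's metric on $\partial\mathbb{R}^n$ with a rescaled $S^{n-1}$ and your sketch of $cdim(S^{n-1})=n-1$ simply fill in details the paper leaves implicit.
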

Previously, the capacity dimension of the boundary of product of trees is only known to be either $n$ or $n-1$.

\section{Further Questions}

As mentioned in the introduction, the ultimate goal is to prove that CAT(0) groups have finite asymptotic dimension. Moran proved that the boundaries of CAT(0) groups have finite capacity dimension. Therefore we hope to prove the inequality in Theorem \ref{asdimcdiminequality}:
\begin{equation}\label{theinequality}
asdim X\le cdim\partial X+1.
\end{equation}
One possible way is to use the Hurewicz-type mapping theorem, which is proved in \cite{BD}.
\begin{theorem}\label{Hurewicz}
	Lef $f:X\rightarrow Y$ be a Lipschitz map from a geodesic metric space $X$ to a metric space $Y$. Suppose that for every $R>0$ the set family $\{f^{-1}(B_R(y))\}_{y\in Y}$ satisfies the inequality $asdim\le n$ uniformly. Then $asdim X\le asdim Y+n$.
\end{theorem}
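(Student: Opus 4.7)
The plan is to follow the Bell--Dranishnikov strategy for Hurewicz-type inequalities: construct a uniformly bounded cover of $X$ with prescribed Lebesgue number by pulling back a cover of $Y$ coming from $asdim\,Y \le m$, refining each pullback with a cover coming from the uniform fibre hypothesis, and combining the two layers of colouring into a single cover of $X$ of the required order. Write $m = asdim\,Y$ and let $K$ denote the Lipschitz constant of $f$.

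Fix a target Lebesgue number $d > 0$ in $X$. First, use $asdim\,Y \le m$ to produce, for a parameter $R$ chosen large, a cover $\mathcal{V}$ of $Y$ decomposing into $m+1$ families $\mathcal{V}_0, \dots, \mathcal{V}_m$ of $R$-disjoint sets of diameter at most $D_Y = D_Y(R)$. Take $R = K d$, so that Lipschitz continuity of $f$ forces $d_X(f^{-1}(V), f^{-1}(V')) \ge d$ whenever $V, V' \in \mathcal{V}_i$ are distinct. Next, for each $V$ pick $y_V \in V$, so $V \subset B_{D_Y}(y_V)$; by the uniform fibre hypothesis applied to the family $\{f^{-1}(B_{D_Y}(y))\}_{y \in Y}$ with Lebesgue parameter $d$, each $f^{-1}(B_{D_Y}(y_V))$ admits a cover splitting into $n+1$ families $\mathcal{U}_V^0, \dots, \mathcal{U}_V^n$ of $d$-disjoint sets of diameter at most $D_X = D_X(d, D_Y)$, with $D_X$ independent of $V$.

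To assemble these into a cover of $X$ of order at most $m + n + 1$, I would induct on $m$. The base case $m = 0$ is direct: since $\mathcal{V}_0$ alone is an $R$-disjoint family covering $Y$, the $n+1$ families
$$\mathcal{W}^j \;=\; \bigcup_{V \in \mathcal{V}_0}\bigl\{\, U \cap f^{-1}(V) : U \in \mathcal{U}_V^j \,\bigr\}\qquad (j = 0, \dots, n)$$
are each $d$-disjoint (within one $V$ by the fibre colouring, between distinct $V \in \mathcal{V}_0$ by Lipschitz continuity and the choice $R = Kd$), uniformly bounded by $D_X$, and together cover $X$, so $asdim\,X \le n$. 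For the inductive step $m \ge 1$, isolate $Z_0 = \bigcup_{V \in \mathcal{V}_0} f^{-1}(V)$, which has $asdim \le n$ by the same combining argument, and consider $Z' = X \setminus Z_0$, which is covered by the preimages of $\mathcal{V}_1 \cup \dots \cup \mathcal{V}_m$; the restricted map $f|_{Z'}$ is Lipschitz into a subspace of $Y$ of asymptotic dimension at most $m - 1$, and the uniform fibre hypothesis survives restriction, so the inductive hypothesis yields $asdim\,Z' \le (m - 1) + n$. A version of Bell--Dranishnikov's Finite Union Theorem, applied to the pair $(Z_0, Z')$ and exploiting the $d$-separation between $Z_0$ and the bulk of $Z'$, then gives $asdim\,X \le m + n$. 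Geodesicity of $X$ enters precisely at the Union-Theorem step, to thicken cover elements along short geodesic segments so that the merged cover retains its Lebesgue number.

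The main obstacle is this final assembly: the naive product colouring yields multiplicity $(m + 1)(n + 1)$ rather than the sharp additive $m + n + 1$, and closing this gap needs both the induction on $m$ and a Finite Union Theorem that propagates mesh and separation constants through the inductive step. Uniformity of the fibre hypothesis is what keeps the constants $D_X$ and $D_Y$ independent of the individual choices of $V$ and $y_V$, so that the final cover is genuinely uniformly bounded.
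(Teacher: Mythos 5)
The paper does not prove this theorem: it is quoted from Bell--Dranishnikov \cite{BD} and used as a black box, so there is no in-paper argument to compare yours against; I will assess the proposal on its own terms. Your opening moves (pull back an $R$-disjoint, $D_Y$-bounded cover of $Y$ with $R=Kd$, refine each pullback inside $f^{-1}(B_{D_Y}(y_V))$ using the uniform fibre hypothesis) are the correct Bell--Dranishnikov setup, and the base case $m=0$ is essentially right. The gap is in the inductive step. You claim that $f|_{Z'}$ maps into a subspace of $Y$ of asymptotic dimension at most $m-1$, where that subspace is the union of the colour classes $\mathcal{V}_1,\dots,\mathcal{V}_m$. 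All you know about this set is that \emph{at the single scale $R$} it splits into $m$ $R$-disjoint uniformly bounded families; asymptotic dimension is a condition at all scales, and such a set can still have asymptotic dimension $m$. For instance, take $Y=\mathbb{R}^2$ covered by three disjoint families of bounded tiles: deleting one family leaves a coarsely dense subset of $\mathbb{R}^2$, hence of asymptotic dimension $2$, not $1$. The same single-scale problem afflicts the claim $asdim\,Z_0\le n$: the family $\{f^{-1}(V)\}_{V\in\mathcal{V}_0}$ is $d$-disjoint only for the one value $d=R/K$ fixed by your choice of $\mathcal{V}_0$, so for test scales larger than $R/K$ your families $\mathcal{W}^j$ need not be disjoint, and the infinite union theorem does not apply. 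A quick sanity check confirms the structure is broken: if both claims held, the union-theorem step would return $\max\{n,(m-1)+n\}=m+n-1$ at every stage of the induction, telescoping to $asdim\,X\le n$ for every $m$, which is absurd.

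The actual mechanism by which the additive bound $m+n+1$ is obtained is not an induction that peels colour classes off $Y$. One works at a single (carefully nested) pair of scales and performs a saturation, or absorption, step: sets of one layer are enlarged to swallow all nearby sets of the other layers, and the enlarged sets are then merged along anti-diagonals $i+j=\mathrm{const}$ of the $(m+1)\times(n+1)$ grid of families $\mathcal{U}^j_V$ with $V\in\mathcal{V}_i$, so that only $m+n+1$ colours survive; geodesicity of $X$ is what keeps the saturated sets uniformly bounded and lets one propagate the disjointness constants through the merge. If you want a complete argument you should replace the induction on $m$ by this absorption step (or, as the paper does, simply cite \cite{BD}).
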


To use the Theorem, let $X$ be the CAT(0) space with basepoint $x_0$, let $Y$ be $\mathbb{R}$, and for any $x\in X$, let $f(x)=d(x,x_0)$. Then for any $B_R(y)\subset Y$, $f^{-1}(B_R(y))$ is a circular ring region of width $R$ in $X$, denoted by $CR_D$, as shown in Figure \ref{futurequestion}. According to Theorem \ref{Hurewicz}, we want to show that $asdim CR_D\le n$, where $n$ is the capacity dimension of $\partial X$. In particular, the mesh and the Lebesgue number of the cover realizing the asymptotic dimension should not depend on $D$.

\begin{figure}
	\begin{center}
		\includegraphics[scale=0.22]{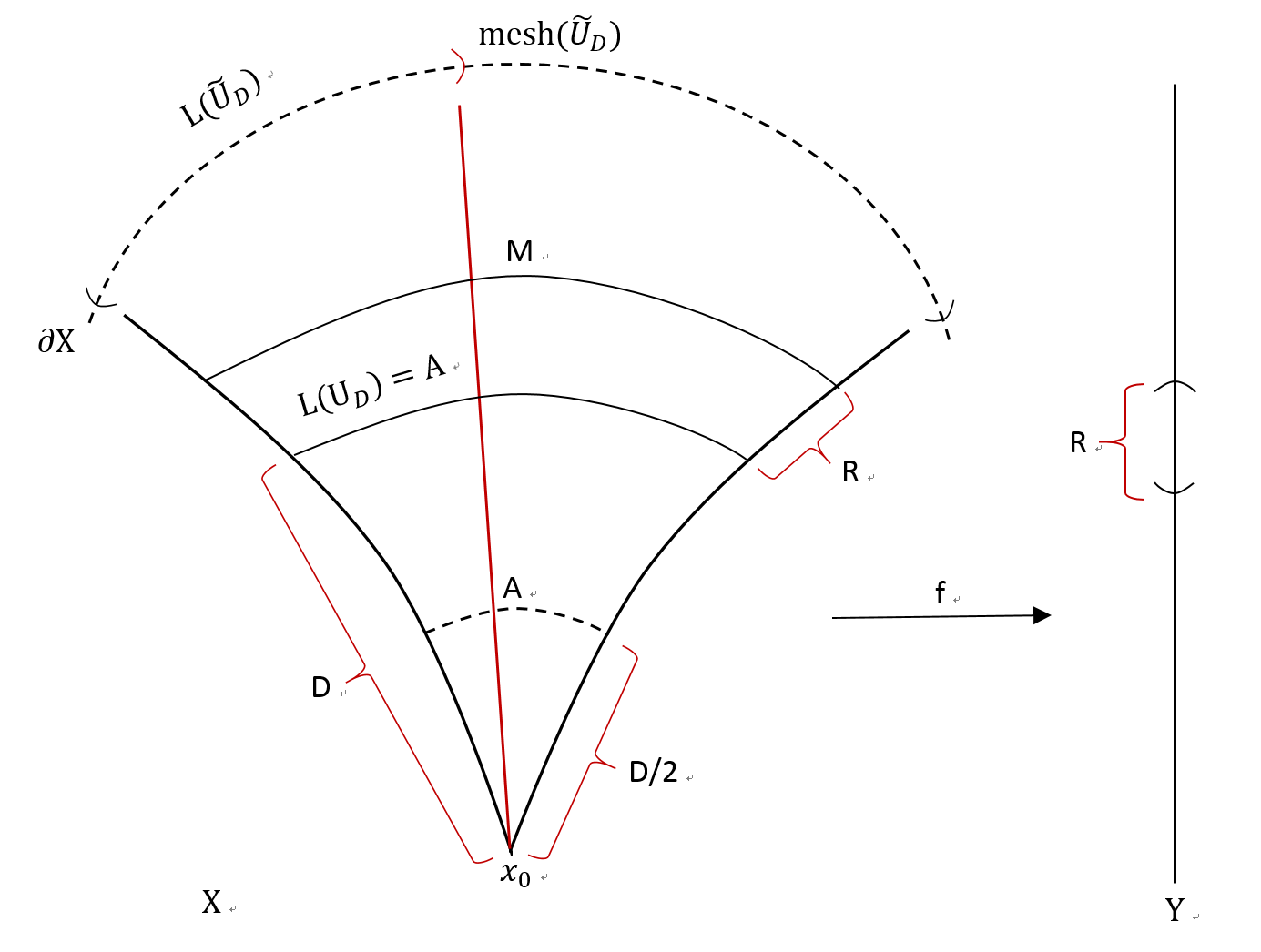}
		\caption{Intersections of $U$ with two spheres.\label{futurequestion}}
	\end{center}
\end{figure}

The natural idea is for each $D$, we find cover $\mathcal{\tilde{U}}_D$ of $\partial X$ of particular mesh, realizing the capacity dimension of $\partial X$. Thinking of each set in $\mathcal{\tilde{U}}_D$ as a set of geodesic rays, we intersect each set with $CR_D$ to get a cover $\mathcal{U}_D$ of $CR_D$. We hope to choose $mesh(\mathcal{\tilde{U}}_D)$ and $\mathcal{L}(\mathcal{\tilde{U}}_D)$ properly so that there exist $M$ and $\lambda$, for any $D>0$, we have $mesh(\mathcal{U}_D)\le M$ and $\mathcal{L}(\mathcal{U}_D)\ge \lambda$.

Assume $cdim(\partial X)=n$, then by definition, there exists a constant $0<c\le1$ and $\lambda_0$, such that for any $\lambda<\lambda_0$, there exists a cover $\mathcal(\tilde{U})$ of $\partial X$ with $mesh(\tilde{U})\le\lambda$ and $\mathcal{L}(\tilde{U})\ge c\lambda$. For simplicity, let's assume $c=\frac12$, and let $A=\lambda$ in the definition of the conical metric on $\partial X$. Fix $D>0$, choose a cover $\mathcal{\tilde{U}}_D$ on $\partial X$ realizing $cdim \partial X$ with $mesh(\mathcal{\tilde{U}}_D)\le \frac 2D$ and $\mathcal{L}(\mathcal{\tilde{U}}_D)\ge\frac 1D$. Again, for simplicity, we assume the equality holds, i.e.  $mesh(\mathcal{\tilde{U}}_D)= \frac 2D$ and $\mathcal{L}(\mathcal{\tilde{U}}_D)=\frac 1D$.

By definition of the conical metric, we automatically have $\mathcal{L}(\mathcal{U}_D)=A$ for all $D$. Notice $mesh(\mathcal{U}_D)\le M+2R$, hence to get an upper bound on  $mesh(\mathcal{U}_D)$, we need only find an upper bound for $M$. However, the only inequality we have about $M$ is 
$$\frac AM\le \frac{\frac D2}{D+R},$$
which only gives an lowerbound for $M$. 

If $X$ is Euclidean, then the inequality above becomes equality. Therefore the inequality \ref{theinequality}  is proved. However, if the curvature of $X$ is very negative, for example, when $X$ is a tree, then as $D$ getting larger, $M$ can be arbitrarily large. This is the main difficulty of the method. However, when the curvature is very negative, $X$ normally have very nice properties. For example, the inequality \ref{theinequality} for the tree is pretty easy to prove. 

One possible method is to try to find a balance between the two extreme cases. When curvature is close to $0$, use Euclidean-like properties, while when curvature is very negative, use tree-like properties. We hope this can be a valuable idea for further study on the asymptotic dimension of CAT(0) spaces.


\bibliographystyle{plain}
\bibliography{references}

\end{document}